\journal{Stochastic Processes and their Applications}
\newtheorem{theorem}{Theorem}[section]
\newtheorem{proposition}[theorem]{Proposition}
\newtheorem{lemma}[theorem]{Lemma}
\newtheorem{corollary}[theorem]{Corollary}
\date{}
\theoremstyle{definition}
\newtheorem{problem}[theorem]{Open Problem}
\newtheorem{claim}[]{Claim}
\theoremstyle{remark}
\newtheorem{remark}[theorem]{Remark}
\numberwithin{equation}{section}
\newcommand{\E}{\mathbb{E}}
\newcommand{\R}{\mathbb{R}}
\def\var{\operatorname{Var}}		
\def\cov{\operatorname{Cov}}		
\def\dx{\operatorname{d}}
\begin{document}
	
	\begin{frontmatter}
		
		\title{On Rio's proof of limit theorems for dependent random fields}
		\author{L\^{e} V\v{a}n Th\`{a}nh}
		\address{Department of Mathematics, Vinh University, 182 Le Duan, Vinh, Nghe An, Vietnam }
		\ead{levt@vinhuni.edu.vn}

		\begin{abstract}
This paper presents an exposition of Rio's proof of the strong law of large numbers and extends his method to random fields. 
			In addition to considering the rate of convergence in the Marcinkiewicz--Zygmund strong law of large numbers, we go a step further by establishing (i) 
			the	Hsu--Robbins--Erd\"{o}s--Spitzer--Baum--Katz theorem, (ii) the Feller weak law of large numbers, and (iii) the Pyke--Root theorem on mean convergence for dependent random fields. 
			These results significantly improve several particular cases in the literature. The proof is based on new maximal inequalities that hold for random fields 
			satisfying a very general dependence structure.
		\end{abstract}
		
		\begin{keyword}
			Dependent random field \sep Maximal inequality \sep Law of large numbers \sep Complete convergence \sep Mean convergence
			\MSC[2020] 60F05 \sep 60F15 \sep 60F25
		\end{keyword}
		
	\end{frontmatter}

	\section{Introduction and Main Results}\label{sec.intro}

	Consider a sequence $\{X_{n},n\ge1\}$ of square integrable, mean zero random variables. Let $S_{n}=X_1+\cdots+X_n$, $n\ge 1$ be
	the partial sums. Many dependence structures possess the following inequality:
	\begin{equation}\label{orthogonal.inequality}
		\E S_{n}^2 \le C\sum_{i=1}^{n}\E X_{i}^2,\ n\ge1.
	\end{equation}
	Here and hereafter, the symbol $C$ denotes an absolute constant which is not necessarily the same one in each appearance.
To prove strong laws of large numbers (SLLN), we usually need
	a stronger inequality which will be referred to as a Kolmogorov--Doob-type maximal inequality:
	\begin{equation}\label{kd.inequality}
		\E \left(\max_{1\le k\le n} S_{k}^2\right) \le C\sum_{i=1}^{n}\E X_{i}^2,\ n\ge1.
	\end{equation}
	However, \eqref{kd.inequality} is not available for some interesting dependence structures, such as negative dependence,
	extended negative dependence or various mixing sequences. It is even
	invalid for pairwise independence or pairwise negative dependence. Therefore, stronger conditions are usually required for the SLLN under these dependence structures compared to the independence case (see, e.g., Cs\"{o}go et al. \cite{csorgo1983strong} and Mart\u{\i}kainen \cite{martikainen1995strong}). In 1981, Etemadi \cite{etemadi1981elementary} proved that the Kolmogorov SLLN still holds for the pairwise independent and identical distribution (p.i.i.d.) case. The Etemadi subsequences method, however, does not seem to work when the norming sequences are of the form $b_{n}=o(n)$, as in the case of the Marcinkiewicz--Zygmund SLLN (see Remark 3 of Janisch \cite{janisch2021kolmogorov}). Cs\"{o}go et al. \cite{csorgo1983strong} showed that under pairwise independence, the Kolmogorov SLLN for the non-identical distribution case does not hold in general. 
	
	In some cases, it may be necessary to bound moments of order higher than $2$ for either the partial sums or the maximum of the partial sums.
	Let $p\ge 2$ and let $\{X_i,1\le i\le n\}$ be a collection of independent mean zero random variables. The Rosenthal inequality states that
	\begin{equation}\label{rosenthal.inequality1}
		\E\left|\sum_{i=1}^{n}X_i\right|^p\le C(p)\max\left\{\sum_{i=1}^{n}\E |X_{i}|^{p},\left(\sum_{i=1}^{n}\E X_{i}^{2}\right)^{p/2}\right\}.
	\end{equation}
Hereafter, $C(p)$ is a constant depending only on $p$.
	Johnson et al. \cite{johnson1985best} proved that if the random variables $X_{i}$, $1\le i\le n$ are  independent and symmetric,
	then \eqref{rosenthal.inequality1} holds with $C(p)=\left(\frac{K p}{\log p}\right)^{p}$,
	where $K$ is a constant satisfying $1/(e\sqrt{2})\le K\le 7.35$. 
	Recently, Chen et al. \cite{chen2021error} used Stein’s method
	and obtained the bound $K\le 3.5$ without assuming the symmetry of the random
	variables. It is noteworthy that
	the rate $p/\log p$ in the expression of $C(p)$ is optimal, as shown by Johnson at al. \cite{johnson1985best}.
	A stronger version of \eqref{rosenthal.inequality1}
	is
	\begin{equation}\label{rosenthal.inequality2}
		\E\left(\max_{k\le n}\left|\sum_{i=1}^{k}X_i\right|^{p}\right)\le C(p)\max\left\{\sum_{i=1}^{n}\E |X_{i}|^{p},\left(\sum_{i=1}^{n}\E X_{i}^{2}\right)^{p/2}\right\}
	\end{equation}
	which plays a crucial tool in the proof of many limit theorems (see, e.g., \cite{cuny2014martingale,merlevede2013rosenthal,peligrad2005new,utev2003maximal}).
	We will refer to \eqref{rosenthal.inequality2} as a Rosenthal-type maximal inequality.
 Rosenthal-type maximal inequalities have been established for various dependence structures,
 such as stationary sequences (Merlevede and Peligrad \cite{merlevede2013rosenthal}, Peligrad and Utev \cite{peligrad2005new}),
 $\rho$-mixing sequences (Shao \cite{shao1995maximal}), negatively associated sequences (Shao \cite{shao2000comparison}),
and $\rho^{*}$-mixing sequences (Peligrad and Gut \cite{peligrad1999almost}, Utev and Peligrad \cite{utev2003maximal}), etc.
	
	Let $\{X_{n},n\ge1\}$ be a sequence of independent and identically distributed (i.i.d.) random variables. Hsu and Robbins \cite{hsu1947complete} proved that if $\E X_1=0$ and 
	$\E X_{1}^2<\infty$, then the sample mean converges to $0$ completely, i.e.,
	\begin{equation}\label{hre.complete}
		\sum_{n=1}^{\infty}\mathbb{P}\left(\left|\sum_{i=1}^{n}X_i\right|>\varepsilon n \right)<\infty \ \text{ for all }
		\ \varepsilon >0.
	\end{equation}
	Erd\"{o}s \cite{erdos1949theorem} proved that the converse also holds, i.e., \eqref{hre.complete}
	implies $\E X_1=0$ and 
	$\E X_{1}^2<\infty$. This famous result was extended to the case where 
	$\E X_{1}^2$ can be infinite by Baum and Katz \cite{baum1965convergence}. 
	The Baum--Katz theorem reads as follows.
	
	\begin{theorem}[Baum and Katz \cite{baum1965convergence}]\label{bk}
		Let $p\ge 1, 1/2<\alpha\le 1, \alpha p\ge 1$ and let $\{X_{n},n\ge1\}$ be a sequence of i.i.d. random variables. If
		\begin{equation}\label{bk01}
			\E X_1=0 \ \text{ and }\ \E |X_{1}|^p<\infty,
		\end{equation}
		then
		\begin{equation}\label{bk03}
			\sum_{n=1}^{\infty}n^{\alpha p-2}\mathbb{P}\left(\left|\sum_{i=1}^{n}X_i\right|>\varepsilon n^{\alpha}\right)<\infty \ \text{ for all }
			\ \varepsilon >0,
		\end{equation}
		and
		\begin{equation}\label{bk05}
			\sum_{n=1}^{\infty}n^{\alpha p-2}\mathbb{P}\left(\max_{k\le n}\left|\sum_{i=1}^{k}X_i\right|>\varepsilon n^{\alpha}\right)<\infty \ \text{ for all }
			\ \varepsilon >0.
		\end{equation}
		Conversely, if one of the sums is finite for all $\varepsilon>0$, then \eqref{bk01} holds.
	\end{theorem}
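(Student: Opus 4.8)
The plan is to prove the Baum--Katz theorem for i.i.d.\ random variables in both directions. For the \emph{sufficiency} direction, the key idea is the standard truncation argument combined with a careful summation. First I would fix $\varepsilon>0$ and, for each $n\ge1$, truncate the variables at level $n^{\alpha}$ by setting $X_i' = X_i\mathbf{1}(|X_i|\le n^{\alpha})$ and $X_i''=X_i - X_i'$. The moment condition $\E|X_1|^p<\infty$ is equivalent, via the standard layer-cake identity $\E|X_1|^p = p\int_0^\infty t^{p-1}\Pro(|X_1|>t)\,\dx t$, to the summability statement $\sum_{n} n^{\alpha p - 1}\Pro(|X_1|>\varepsilon n^{\alpha}) < \infty$; this will be the workhorse for controlling the tail part. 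The plan is to split the event in \eqref{bk03} into a contribution from the truncated sums and a contribution from the tails, handle the tail part directly by a union bound and the layer-cake estimate, and handle the centered truncated part by a moment (Markov) inequality of order chosen to make $\sum_n n^{\alpha p-2} n^{-q\alpha}\E|\sum_{i\le n}(X_i'-\E X_i')|^q$ converge.

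The crucial technical point is that, to close the sufficiency argument, I need a good bound on $\E|\sum_{i\le n}(X_i'-\E X_i')|^q$, and for the maximal statement \eqref{bk05} I need the same bound with the maximum inside. This is exactly where the Rosenthal-type maximal inequality \eqref{rosenthal.inequality2} enters. Choosing $q$ large enough (strictly larger than both $p$ and $2$, and large enough that $\alpha p - 2 - q\alpha < -1$ when the second-moment term dominates), I would apply \eqref{rosenthal.inequality2} to the centered truncated variables, bound $\sum_{i\le n}\E|X_i'|^q$ and $(\sum_{i\le n}\E (X_i')^2)^{q/2}$ by elementary truncated-moment estimates, and then verify that the resulting series in $n$ converges. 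A separate small argument is needed to show the centering terms $\sum_{i\le n}\E X_i'$ are negligible relative to $n^{\alpha}$, which follows from $\E X_1=0$ together with the truncation level and the condition $\alpha p\ge1$. Because \eqref{rosenthal.inequality2} already controls the maximum, the maximal version \eqref{bk05} comes at essentially no extra cost once \eqref{bk03} is done; the only subtlety is handling the maximal tail term, which yields to a union bound over the $n$ summands.

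For the \emph{converse} (necessity) direction, I would assume one of the two sums is finite and deduce \eqref{bk01}. Since \eqref{bk05} dominates \eqref{bk03}, it suffices to assume \eqref{bk03} is finite. The main tool here is a desymmetrization/Levy-type argument: one first reduces to symmetric variables, and the finiteness of the series forces $\Pro(|S_n| > \varepsilon n^{\alpha})\to0$, which via an application of the Levy maximal inequality (or a blocking argument over dyadic ranges $2^k \le n < 2^{k+1}$) transfers summability of the partial-sum tails into summability of the single-summand tails $\sum_n n^{\alpha p - 2}\Pro(|X_1| > C n^{\alpha})$. Reading this backward through the layer-cake identity gives $\E|X_1|^p < \infty$. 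The identically-distributed hypothesis is essential here, as is $\alpha>1/2$, which guarantees that the maximal jump $\max_{i\le n}|X_i|$ is comparable to $|S_n|$ on the relevant events. Finally, $\E X_1 = 0$ is recovered from the SLLN consequence of \eqref{bk03}: the finiteness of the series forces $S_n/n^{\alpha}\to 0$ a.s., and since $\alpha\le1$ this pins the mean to zero. The hard part of the whole proof is the converse, specifically the blocking estimate that decouples the tail of $\max_{i\le n}|X_i|$ from that of $S_n$; the sufficiency direction is mechanical once \eqref{rosenthal.inequality2} is granted.
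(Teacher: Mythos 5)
Your outline is a correct (classical) proof, but be aware that the paper itself does not prove Theorem \ref{bk} at all: it is quoted as a known result of Baum and Katz \cite{baum1965convergence}, with the equivalence of \eqref{bk03} and \eqref{bk05} dispatched by the L\'{e}vy inequalities following Gut and Stadtm\"{u}ller \cite{gut2012hsu}. The meaningful comparison is therefore with the paper's proof of its generalization, Theorem \ref{thm.BKHRE}, and there the routes genuinely diverge. You lean on tools that are specific to full independence: the Rosenthal-type maximal inequality \eqref{rosenthal.inequality2} for the sufficiency, and symmetrization plus the L\'{e}vy inequality (with dyadic blocking) for the necessity. That gives a short, mechanical argument in the i.i.d. setting; indeed, for $1\le p<2$ you do not even need high moments, since Chebyshev of order $2$ together with Kolmogorov's maximal inequality and the Tonelli computation
\begin{equation*}
\sum_{n\ge 1} n^{\alpha p-1-2\alpha}\,\E\left(X_1^2\mathbf{1}(|X_1|\le n^{\alpha})\right)\le C\,\E|X_1|^p<\infty
\end{equation*}
already closes the truncated part. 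The paper, by contrast, works only under Condition $(H_{2q})$, where neither \eqref{rosenthal.inequality2} nor the L\'{e}vy inequalities are available (they fail already for pairwise independence); it manufactures a substitute maximal inequality (Theorem \ref{thm.maximal.ineq2}) out of the non-maximal bound \eqref{eq.condH2q} via Rio's telescoping dyadic decomposition (Claim \ref{claim.estimate}), and its converse replaces symmetrization/L\'{e}vy by the Borel--Cantelli-type estimate of Lemma \ref{lem.BC}, which only needs Condition $(H_2)$. Your approach buys brevity and the sharp classical statement; the paper's approach buys applicability to dependent random fields (pairwise independence, negative dependence, $\rho'$-mixing), at the cost of heavier bookkeeping.

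Two small corrections to your exponent accounting, neither fatal. First, in the sufficiency step for $p\ge 2$, the dominant Rosenthal term is $\left(\sum_{i\le n}\E (X_i')^2\right)^{q/2}\le \left(n\,\E X_1^2\right)^{q/2}$, which contributes $n^{q/2}$, so the condition you need is $\alpha p-2-q\left(\alpha-\tfrac12\right)<-1$, not $\alpha p-2-q\alpha<-1$; this is exactly where the hypothesis $\alpha>1/2$ enters the direct half (it is not only needed in the converse, as your last paragraph suggests). Second, when $\alpha p=1$ the weights are $n^{-1}$, and finiteness of $\sum_n n^{-1}\Pro(|S_n|>\varepsilon n^{\alpha})$ does not by itself force $\Pro(|S_n|>\varepsilon n^{\alpha})\to 0$; you must run the dyadic blocking you mention (using monotonicity of $\max_{k\le n}|S_k|$ across a block after symmetrization and L\'{e}vy) to extract decay along $n=2^k$ before transferring summability to the one-dimensional tails $n\Pro(|X_1|>Cn^{\alpha})$.
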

	The implication $\eqref{bk05} \Rightarrow \eqref{bk03}$ is trivial and the
	implication $\eqref{bk03} \Rightarrow \eqref{bk05}$ is a direct consequence of the L\'{e}vy inequalities (see, e.g., \cite[Theorem 3.7.1]{gut2013probability}) as noted by Gut and Stadtm\"{u}ller \cite[Page 447]{gut2012hsu}.
	The equivalence of \eqref{bk01} and \eqref{bk03} for the case where $p=1$ and $\alpha=1$ was proved by Spitzer \cite{spitzer1956combinatorial}.
	The case where $p>1$, $1/2<\alpha\le 1$ and $\alpha p>1$ is
	the first part of Theorem 3 of Baum and Katz \cite{baum1965convergence},
	and it reduces to the Hsu--Robbins--Erd\"{o}s theorem when $p=2$ and $\alpha=1$. 
	The case where $1\le p<2$ and $\alpha=1/p$ is the second part of Theorem 1 of Baum and Katz \cite{baum1965convergence}, and it is of special interest because 
	each of \eqref{bk01}, \eqref{bk03} and \eqref{bk05} is equivalent to the
	Marcinkiewicz--Zygmund SLLN. 
For the case $0<p<1$,
	Peligrad \cite{peligrad1985convergence} proved that the second half of
	\eqref{bk01} implies \eqref{bk05} without assuming any dependence structure (see Peligrad \cite[Theorem 1]{peligrad1985convergence}).

	In \cite{pyke1968convergence}, Pyke and Root
	proved that if $1\le p<2$, then the condition \eqref{bk01} is also necessary and sufficient for
	convergence in $\mathcal{L}_p$ of the partial sums. 
	
	\begin{theorem}[Pyke and Root \cite{pyke1968convergence}]\label{pr}
		Let $1\le p<2$ and let $\{X_{n},n\ge1\}$ be a sequence of i.i.d. random variables. Then
		\begin{equation}\label{bkpr03}
			\frac{\sum_{i=1}^{n}X_{i}}{n^{1/p}}\overset{\mathcal{L}_p}{\to} 0 \text{ as } n\to\infty
		\end{equation}
		if and only if \eqref{bk01} holds.
	\end{theorem}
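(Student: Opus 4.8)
The plan is to treat the two implications separately, with the sufficiency direction carrying essentially all the difficulty. Convergence in $\mathcal{L}_p$ presupposes that each $S_n/n^{1/p}$ lies in $\mathcal{L}_p$, so $\E|X_1|^p<\infty$ is immediate from the case $n=1$. For the mean-zero requirement I would use that $\mathcal{L}_p$ convergence forces $\mathcal{L}_1$ convergence (since $p\ge1$ on a probability space), whence $n^{1-1/p}|\E X_1|=|\E S_n|/n^{1/p}\le \E|S_n|/n^{1/p}\to0$; as $1-1/p\ge0$ this yields $\E X_1=0$. This disposes of necessity, and for sufficiency I assume \eqref{bk01} and aim to prove \eqref{bkpr03}.

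Since $\E|S_n/n^{1/p}|^p=\E|S_n|^p/n$, the goal \eqref{bkpr03} is equivalent to $\E|S_n|^p=o(n)$, which I would establish directly by truncation. Fix the level $b_n=n^{1/p}$ and write $X_i=X_i'+X_i''$ with $X_i'=X_i\mathbf{1}(|X_i|\le b_n)$ and $X_i''=X_i\mathbf{1}(|X_i|>b_n)$. Splitting off the means and applying the $c_r$-inequality gives
\begin{equation*}
\E|S_n|^p\le C\left(\E\Big|\sum_{i=1}^n(X_i'-\E X_1')\Big|^p + \big(n|\E X_1'|\big)^p + \E\Big|\sum_{i=1}^n(X_i''-\E X_1'')\Big|^p\right),
\end{equation*}
where I have absorbed the two centering sums $n\E X_1'$ and $n\E X_1''$ into a single term because $\E X_1=0$ forces $\E X_1'=-\E X_1''$. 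I would bound each of the three pieces by $o(n)$.

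The two centered sums consist of i.i.d.\ mean-zero summands, so the von Bahr--Esseen inequality (for the large part) and the interpolation $\E|Z|^p\le(\E Z^2)^{p/2}$ valid for $p\le2$ (for the bounded part) apply. The large part gives $\E|\sum(X_i''-\E X_1'')|^p\le Cn\,\E[|X_1|^p\mathbf{1}(|X_1|>b_n)]=o(n)$ by dominated convergence. For the centering term, $\E X_1=0$ yields $|\E X_1'|=|\E[X_1\mathbf{1}(|X_1|>b_n)]|\le b_n^{1-p}\E[|X_1|^p\mathbf{1}(|X_1|>b_n)]$, and the choice $b_n=n^{1/p}$ is calibrated precisely so that $(n|\E X_1'|)^p=o(n)$. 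The bounded part reduces to the key estimate
\begin{equation*}
\E\Big|\sum_{i=1}^n(X_i'-\E X_1')\Big|^p\le \big(n\,\E[X_1^2\mathbf{1}(|X_1|\le b_n)]\big)^{p/2},
\end{equation*}
whose naive evaluation via $X_1^2\le|X_1|^p b_n^{2-p}$ yields only the bound $O(n)$.

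The main obstacle is upgrading this $O(n)$ bound to genuine $o(n)$, and this is exactly where $\E|X_1|^p<\infty$ with $p<2$ must be exploited sharply. I would split the second moment at an auxiliary level $\delta b_n$: on $\{|X_1|\le\delta b_n\}$ the factor $b_n^{2-p}$ improves to $(\delta b_n)^{2-p}$, contributing $\delta^{2-p}\E|X_1|^p$, while on $\{\delta b_n<|X_1|\le b_n\}$ the remaining mass is at most $\E[|X_1|^p\mathbf{1}(|X_1|>\delta b_n)]\to0$. This gives $n^{-1}\E|\sum(X_i'-\E X_1')|^p\le(\delta^{2-p}\E|X_1|^p+o(1))^{p/2}$; letting $n\to\infty$ and then $\delta\to0$ drives it to $0$, where $p<2$ is used through $\delta^{2-p}\to0$. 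Collecting the three $o(n)$ bounds yields $\E|S_n|^p=o(n)$ and hence \eqref{bkpr03}. The role of the restriction $p<2$ is transparent here: at $p=2$ the bounded part would contribute the non-vanishing constant $\var(X_1)$, and indeed $\mathcal{L}_2$ convergence fails in that endpoint case.
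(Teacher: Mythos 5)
Your proof is correct: the necessity argument (moment finiteness from well-definedness, mean zero from $n^{1-1/p}|\E X_1|\le \E|S_n|/n^{1/p}\to 0$) and the sufficiency argument (truncation at $b_n=n^{1/p}$, von Bahr--Esseen for the unbounded part, the calibration $(n|\E X_1'|)^p\le n\left(\E\left[|X_1|^p\mathbf{1}(|X_1|>b_n)\right]\right)^p$, and the Jensen/variance bound with the auxiliary level $\delta b_n$ to upgrade $O(n)$ to $o(n)$) all check out. But your route is genuinely different from the paper's treatment. The paper never proves Theorem \ref{pr} directly --- it quotes it from Pyke and Root \cite{pyke1968convergence} --- and what it actually proves is the generalization, Theorem \ref{thm.PR}: mean convergence of the \emph{maximum} of double sums for random fields satisfying Condition $(H_2)$ under stochastic domination. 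That proof represents the $p$-th moment as $\int_0^\infty \mathbb{P}(\cdot>z^{1/p})\,\dx z$, splits the integral at $z=1$, handles the first piece via the Feller WLLN (Theorem \ref{thm.Feller}, which itself rests on the Rio-type maximal inequality of Theorem \ref{thm.maximal.ineq1}) and dominated convergence, and the second piece via a $z$-dependent truncation and Tonelli's theorem; its necessity part extracts $\E|X-\mu|^p<\infty$ by a telescoping difference of consecutive partial sums rather than your appeal to the case $n=1$. What your approach buys: it is short, self-contained, avoids maximal inequalities entirely, and since it uses only orthogonality of the truncated variables and von Bahr--Esseen --- both valid under pairwise independence (Chen, Bai, and Sung \cite{chen2014bahr}) --- it in fact proves the theorem for p.i.i.d.\ sequences, which is precisely the observation the paper makes in its introduction (``no maximal inequality is needed'' for the Pyke--Root theorem for partial sums). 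What it cannot buy: any control of $\max_{k\le n}|S_k|$, let alone of maxima of multi-indexed sums, which is exactly what the paper's heavier machinery is designed to deliver.
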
 
	
	The Hsu--Robbins--Erd\"{o}s--Spitzer--Baum--Katz theorem was extended in various directions. 
	We refer to 
	\cite{dedecker2008convergence,gut1978marcinkiewicz,gut2012hsu,kuczmaszewska2011convergence,peligrad1999almost,peligrad2023convergence,rio1995maximal,shao1995maximal,stoica2011note}
	and the references therein. In all these papers, the 
	maximal inequalities play a crucial step in the proofs.
	It was shown that
	if a sequence of random variables satisfies a Kolmogorov--Doob-type maximal inequality, then the Baum--Katz theorem holds for the case where $1\le p<2$ (see, e.g., \cite{thanh2023new}).
	On the Pyke--Root theorem, however, no maximal inequality is needed and the result holds for sequences of p.i.i.d. random variables (see, e.g., Chen, Bai, and Sung \cite{chen2014bahr}).
	
	In \cite{rio1995vitesses}, 	Rio developed a new method to prove that the Baum--Katz theorem (for the case $1\le p<2$ and $\alpha=1/p$) still holds for
	sequences of p.i.i.d. random variables.
Although the maximal inequalities are somewhat concealed in Rio's proof \cite{rio1995vitesses},
	his method provides an elegant way to bound 
	the tail probabilities of the maximum of partial sums of pairwise independent random variables.
Rio's method has recently been applied by Th\`{a}nh \cite{thanh2020theBaum, thanh2023extension, thanh2023hsu} 
to derive laws of large numbers with regularly varying norming constants.	In this paper, we give an exposition of Rio's proof by 
	showing that his method can lead to a Rosenthal-type maximal inequality
	for double sums of dependent random variables. This result is then used
	to prove various limit theorems for two-dimensional random fields.
	In addition to extending Rio's result on SLLN for dependent random fields, 
	we also obtain the Feller weak law of large numbers (WLLN) and the Pyke--Root theorem on mean convergence for the maximum of double sums 
	of dependent random variables. Furthermore, the Hsu--Robbins--Erd\"{o}s SLLN for the maximum of double sums from double arrays of dependent random variables
	is established. It is important to note that the Hsu--Robbins--Erd\"{o}s theorem does not hold in general if the
	independence assumption is weakened to
	the pairwise independence, even when the underlying random variables are uniformly bounded
	(see Szynal \cite{szynal1993complete}).
	We note further that in the proof of the Pyke--Root theorem and the Feller WLLN for partial sums, as mentioned before, no maximal inequalities
	are required and
	the results hold for p.i.i.d. random variables.
	However, if one considers convergence of the maximum of partial sums, a Kolmogorov--Doob-type maximal inequality would be needed, and the existing methods do not seem
	to push through for the case of p.i.i.d. random variables.

	Wichura \cite{wichura1969inequalities} was apparently the first to 
	establish the following multidimensional version of the Kolmogorov--Doob-type maximal inequality \eqref{kd.inequality}
	for the case of independent random variables. Let $\{X_{m,n},m\ge 1,n\ge 1\}$ be a double array of independent mean zero random variables and let
	$S_{m,n}=\sum_{i=1}^{m}\sum_{j=1}^{n}X_{i,j}$
	be the partial sums. Then
	\begin{equation}\label{kd.inequality.doule}
		\E \left(\max_{k\le m, \ell\le n}S_{k,\ell}^2\right) \le 16\sum_{i=1}^{m}\sum_{j=1}^{n}\E X_{i,j}^2,\ m\ge 1, n\ge1.
	\end{equation}
	For moment inequalities of the partial sums \eqref{orthogonal.inequality} and \eqref{rosenthal.inequality1}, it is clear that the case of the single sums is 
	the same as its double sums counterpart. However,
	there is a substantial difference between \eqref{kd.inequality.doule} and \eqref{kd.inequality}
	because of the partial (in lieu of linear) ordering of the index set
	$\{(i,j),i\ge1,j\ge1\}$. Wichura's \cite{wichura1969inequalities} results
	had a great impact on the investigation of limit theorems for random fields.
	For the case of i.i.d. random variables, we refer to a survey paper by Pyke \cite{pyke1973partial} which covers many important topics 
	such as fluctuation
	theory, the SLLNs, inequalities, the central limit theorems, and the law of the
	iterated logarithm for the multidimensional sums.
	For a comprehensive exposition on the 
	limit theorems for multiple sums of independent
	random variables, we refer to a monograph by Klesov \cite{klesov2014limit}.
	
	The Hsu--Robbins--Erd\"{o}s--Spitzer--Baum--Katz and the Pyke--Root theorems were extended to independent random fields by Gut \cite{gut1978marcinkiewicz,gut1980convergence}
	and Gut and Stadtm\"{u}ller \cite{gut2012hsu}, and to dependent random fields by Peligrad and Gut \cite{peligrad1999almost}, 
	Giraudo \cite{giraudo2019deviation} and 
	Kuczmaszewska and Lagodowski \cite{kuczmaszewska2011convergence}, among others.
	The dependence structures considered in Peligrad and Gut \cite{peligrad1999almost}, Giraudo \cite{giraudo2019deviation} and 
	Kuczmaszewska and Lagodowski \cite{kuczmaszewska2011convergence} are, respectively, $\rho^{*}$-mixing random fields, martingale differences random fields, and negatively associated random fields,
all possessing a Kolmogorov--Doob-type maximal inequality. When working with limit theorems for the maximum of multidimensional sums of dependent random variables, we
	encounter the following difficulties:
	\begin{description}
		\item[(i)] The Kolmogorov--Doob-type and the Rosenthal-type maximal inequalities are not valid, even in the case of dimension one (e.g., pairwise independence, pairwise negative dependence).
		This is due to the fact that the Kolmogorov SLLN for the non-identically distributed
		case does not necessarily hold if the underlying random variables are only pairwise independent (see, e.g., Cs\"{o}go et al. \cite[Theorem 3]{csorgo1983strong}).
		
		\item[(ii)] For some dependence structures, the Kolmogorov--Doob-type and the Rosenthal-type maximal inequalities are not available for the multidimensional setting 
		(e.g., the $\rho'$-mixing random fields, negatively dependent random fields).
	\end{description}

	The advantage of our approach is that we only assume that
	the underlying random variables satisfy \eqref{rosenthal.inequality1} for some fixed $p\ge 2$.
	Therefore, we can avoid the above difficulties, and the main results can be applied to all aforementioned dependence structures.
	
	For the sake of clarity, especially due to the complicated notation, we shall establish
	the results for double-indexed random fields. The results would be able to extend to $d$-dimensional random fields for any integer $d\ge2$ by the same method.
	
	Throughout this paper, $C(\cdot)$, $C_1(\cdot),\ldots$ denote generic constants which are not necessarily the same one in each appearance,
	and depend only on the variables inside the parentheses.
	For $a,b\in\R$, $\max\{a,b\}$ will be denoted by $a\vee b$, and 
	the natural logarithm of $a\vee 2$ will be denoted by $\log a$.
	For a set $S$, ${\textbf{1}}(S)$ denotes the indicator
	function of $S$, and $|S|$ denotes the cardinality of $S$. For $x\ge0$, and for a fixed positive integer $\nu$, we let
	\begin{equation}\label{notation1}
		\log_\nu(x):=(\log x)(\log\log x)\ldots (\log\cdots\log x),
	\end{equation}
	and
	\begin{equation}\label{notation2}
		\log_{\nu}^{(2)}(x):=(\log x)(\log\log x)\ldots (\log\cdots\log x)^{2},
	\end{equation}
	where in both \eqref{notation1} and \eqref{notation2}, there are $\nu$ factors.
	For example, $\log_2(x)=(\log x)(\log\log x)$, and $\log_{3}^{(2)}(x)=(\log x)(\log\log x)(\log\log\log x)^{2}$, and so on.
	For positive sequences $\{u_n,n\ge1\}$ and $\{v_n,n\ge1\}$, we write $u_n\asymp v_n$
	to mean
	\[0<\liminf \dfrac{u_n}{v_n} \le \limsup \dfrac{u_n}{v_n} <\infty. \]
	
	The Hsu--Robbins--Erd\"{o}s--Spitzer--Baum--Katz, the Feller WLLN and the Pyke--Root theorems
	were originally stated for identically distributed random variables. 
	A natural extension of the identical distribution condition, known as stochastic domination, is defined as follows.
	A family of random variables $\{X_{\lambda},\lambda\in \Lambda\}$ is said to be \textit{stochastically
		dominated} by a random variable $X$ if
	\begin{equation*}
		\sup_{\lambda\in\Lambda}\mathbb{P}(|X_\lambda|>x)\le \mathbb{P}(|X|>x), \ x\in\mathbb{R}.
	\end{equation*}
	Some interesting properties concerning the concept of stochastic domination as well as relationships between stochastic domination and 
	uniform integrability were recently established in \cite{rosalsky2021note}. If 
	$\{X_{\lambda},\lambda\in \Lambda\}$ is stochastically
	dominated by a random variable $X$, then for all $r>0$ and $a>0$,
	\begin{equation}\label{sto.domination1}
		\sup_{\lambda\in\Lambda}\E\left(|X_{\lambda}|^r\mathbf{1}(|X_{\lambda}|>a)\right)\le \E\left(|X|^r\mathbf{1}(|X|>a)\right)
	\end{equation}
	and
	\begin{equation}\label{sto.domination2}
		\sup_{\lambda \in \Lambda}\E(|X_{\lambda}|^r \mathbf{1}(|X_{\lambda}|\le a))\le \E(|X|^r\mathbf{1}(|X|\le a))+ a^r\mathbb{P}(|X|>a)\le \E |X|^r.
	\end{equation}
	We will use \eqref{sto.domination1} and \eqref{sto.domination2} in our proofs without further mention.
	
	In this paper, we consider a very general dependence structure, defined as follows:
	
\textbf{Condition $(H_{2q})$.}
	Let $q\ge 1$ be a real number. A family of random variables $\{X_{\lambda},\lambda\in \Lambda\}$ is said to satisfy \textit{Condition $(H_{2q})$} if
	for all finite subset $I\subset \Lambda$ and for all family of increasing functions $\{f_{\lambda},\lambda\in I\}$, there exists a finite constant $C(q)$ depending only on $q$ such that
	\begin{equation}\label{eq.condH2q}
		\E\left|\sum_{\lambda\in I}\left(f_{\lambda}(X_{\lambda})-\E f_{\lambda}(X_{\lambda})\right)\right|^{2q}\le C(q) \left(|I|\max_{\lambda\in I}\E |f_{\lambda}(X_{\lambda})|^{2q}+|I|^{q}\max_{\lambda\in I}\left(\E f_{\lambda}^2(X_{\lambda})\right)^{q}\right)
	\end{equation}
	provided the expectations are finite. 
	
	It is easy to see that if $\{X_{\lambda},\lambda\in\Lambda\}$ is a family of pairwise independent (resp, quadruple-wise independent) random variables,
	then it satisfies Condition $(H_2)$ (resp., Condition $(H_4)$). We would like to note that for most of the results on laws of large numbers, 
	we only need to assume that the underlying random variables satisfy Condition $(H_{2})$. 
	By Theorem 2.1 of Chen and Sung \cite{chen2020rosenthal}, we see that
	if a collection of random variables satisfies Condition $(H_{2q'})$ for some $q'> q\ge 1$, then it satisfies Condition $(H_{2q})$.
Various dependence structures satisfy Condition $(H_{2q})$ for all $q\ge 1$
	such as negative dependence, extended negative dependence (see Lemmas 2.1 and 2.3 of Shen et al. \cite{shen2017weak}), $\rho^*$-mixing (see Theorem 4 of Peligrad and Gut \cite{peligrad1999almost}), and $\rho'$-mixing (see Theorem 29.30 of Bradley \cite{bradley2007introduction}). 
	A more detailed discussion of these dependence structures will be provided in Subsection \ref{subsec.mixing}.
	It is worth noting that pairwise negative dependence satisfy 
	Condition $(H_{2})$, but it does not meet Condition $(H_{2q})$ for $q \geq 2$ (see Example on pages 145--146 in Szynal \cite{szynal1993complete} and the discussion on page 2 in Th\`{a}nh \cite{thanh2023hsu}).
To the best of our knowledge, \eqref{eq.condH2q} is not available for $\alpha$-mixing random variables even for $q=1$.
We refer to Chapter 1 of Rio \cite{rio2017asymptotic} for several bounds of variance of the partial sums of $\alpha$-mixing random variables.
	
	The following theorem is the first main result of this paper.
	Theorem \ref{thm.BKHRE} is the Hsu--Robbins--Erd\"{o}s--Spitzer--Baum--Katz theorem for the maximum of
double sums of random variables satisfying Condition $(H_{2q})$.

	\begin{theorem}\label{thm.BKHRE}
		Let $p\ge 1$, $1/2<\alpha\le 1$, $\alpha p\ge 1$ and let 
		$\{X_{m,n}, \ m\ge 1,\ n \geq 1\}$ be a double array of random variables.
		Assume that the array $\{X_{m,n}, \ m\ge 1,\ n \geq 1\}$ satisfies Condition $(H_{2q})$ with $q=1$ if $1\le p<2$ and $q>(\alpha p-1)/(2\alpha-1)$ if $p\ge 2$.
		If $\{X_{m,n}, \ m\ge 1,\ n \ge 1\}$ is stochastically dominated by a
		random variable $X$ satisfying
		\begin{equation}\label{mz04}
			\E\left(|X|^p \log |X|\right)<\infty,
		\end{equation}
		then 
		\begin{equation}\label{mz05}
			\sum_{m=1}^{\infty}\sum_{n=1}^{\infty}
			(mn)^{\alpha p-2}\mathbb{P}\left(\max_{\substack{1\le u\le m\\ 1\le v\le n}}\left|
			\sum_{i=1}^{u}\sum_{j=1}^{v}(X_{i,j}-\E X_{i,j})\right|>\varepsilon (mn)^{\alpha}\right)<\infty \text{ for all }\varepsilon>0.
		\end{equation}
		Conversely, if $X_{m,n}, \ m\ge 1,\ n \ge 1$ have the same distribution as a random variable $X$ and for some $\mu\in\R$,
		\begin{equation}\label{mz05.converse}
			\sum_{m=1}^{\infty}\sum_{n=1}^{\infty}
			(mn)^{\alpha p-2}\mathbb{P}\left(\max_{\substack{1\le u\le m\\ 1\le v\le n}}\left|
			\sum_{i=1}^{u}\sum_{j=1}^{v}(X_{i,j}-\mu)\right|>\varepsilon (mn)^{\alpha}\right)<\infty \text{ for all }\varepsilon>0,
		\end{equation}
		then $\E X=\mu$ and \eqref{mz04} holds.
	\end{theorem}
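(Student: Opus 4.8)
The plan is to prove the direct half by the classical truncation-plus-maximal-inequality scheme, with the twist that the governing inequality is the Rosenthal-type maximal inequality for \emph{double} sums furnished by Condition $(H_{2q})$. Fix $\varepsilon>0$. For each block $(m,n)$ I truncate every entry at the level $(mn)^{\alpha}$, writing $X_{i,j}=Y_{i,j}+Z_{i,j}$ with $Y_{i,j}=X_{i,j}\mathbf{1}(|X_{i,j}|\le(mn)^{\alpha})$, and split the event in \eqref{mz05} into (a) the large-values event $\{\max_{i\le m,j\le n}|X_{i,j}|>(mn)^{\alpha}\}$, (b) the deterministic drift $\max_{u\le m,v\le n}|\sum_{i\le u,j\le v}\E Z_{i,j}|$ produced by recentering the truncated means, and (c) the centered truncated maximum $\max_{u\le m,v\le n}|\sum_{i\le u,j\le v}(Y_{i,j}-\E Y_{i,j})|$. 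For (a), a union bound together with stochastic domination gives $\sum_{m,n}(mn)^{\alpha p-2}\,mn\,\mathbb{P}(|X|>(mn)^{\alpha})=\sum_{m,n}(mn)^{\alpha p-1}\mathbb{P}(|X|>(mn)^{\alpha})$, which, after grouping by $N=mn$ and using $\sum_{N\le T}d(N)\sim T\log T$ (with $d$ the divisor function), is $\asymp\E(|X|^{p}\log|X|)<\infty$. This already explains why \eqref{mz04} is the right moment hypothesis: the logarithm is exactly the price of the partial ordering of the index set. Term (b) is negligible under \eqref{mz04}, since $|\E Z_{i,j}|\le\E(|X|\mathbf{1}(|X|>(mn)^{\alpha}))$ forces the drift to be $o((mn)^{\alpha})$, contributing a finite series.

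The heart of the direct half is term (c). Here I would invoke the Rosenthal-type maximal inequality for double sums — the inequality the paper extracts from Rio's method — to bound $\E(\max_{u\le m,v\le n}|\sum_{i\le u,j\le v}(Y_{i,j}-\E Y_{i,j})|^{2q})$ by a constant multiple of $\sum_{i\le m,j\le n}\E|Y_{i,j}|^{2q}+(\sum_{i\le m,j\le n}\E Y_{i,j}^{2})^{q}$, then apply Markov's inequality of order $2q$. Using stochastic domination the two resulting series become, up to constants, $\sum_{m,n}(mn)^{\alpha p-1-2q\alpha}\E(|X|^{2q}\mathbf{1}(|X|\le(mn)^{\alpha}))$ and $\sum_{m,n}(mn)^{\alpha p-2-2q\alpha+q}(\E(X^{2}\mathbf{1}(|X|\le(mn)^{\alpha})))^{q}$. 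For $p\ge2$ the second is summable precisely when $\alpha p-2-2q\alpha+q<-1$, i.e. when $q>(\alpha p-1)/(2\alpha-1)$ — this is where the stated threshold on $q$ comes from — while the first, after interchanging summation and expectation and using the tail estimate $\sum_{N\ge T}d(N)N^{\gamma}\asymp T^{\gamma+1}\log T$ (valid for $\gamma<-1$), reduces again to $\E(|X|^{p}\log|X|)$. For $1\le p<2$ the choice $q=1$ collapses both terms to the second-moment term, and the same interchange together with $\E(X^{2}\mathbf{1}(|X|\le t))\le t^{2-p}\E|X|^{p}$ once more yields $\E(|X|^{p}\log|X|)$. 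The genuine obstacle in this half is therefore not the bookkeeping but the maximal inequality itself: Condition $(H_{2q})$ controls only a \emph{single} sum over a finite index set, whereas one must dominate the maximum over the rectangle $\{(u,v):u\le m,\,v\le n\}$, whose partial (non-linear) ordering obstructs the usual one-dimensional chaining; supplying such an inequality via Rio's device is the crux.

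For the converse I would argue from a lower bound rather than an upper bound. The mixed second difference of $S_{u,v}-\mu uv$ recovers a single entry, so $\max_{i\le m,j\le n}|X_{i,j}-\mu|\le 4\max_{u\le m,v\le n}|S_{u,v}-\mu uv|$, and \eqref{mz05.converse} gives $\sum_{m,n}(mn)^{\alpha p-2}\mathbb{P}(\max_{i\le m,j\le n}|X_{i,j}-\mu|>4\varepsilon(mn)^{\alpha})<\infty$. To recover the missing factor $mn$ — the exact gap between the exponents $\alpha p-2$ and $\alpha p-1$ demanded by the equivalence $\E(|X|^{p}\log|X|)<\infty\iff\sum_{m,n}(mn)^{\alpha p-1}\mathbb{P}(|X|>\varepsilon(mn)^{\alpha})<\infty$ — I would apply the second-moment (Paley--Zygmund) method to the count $W_{m,n}(t)=|\{(i,j):i\le m,\,j\le n,\ X_{i,j}-\mu>t\}|$. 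Condition $(H_{2})$ (which follows from $(H_{2q})$) bounds $\var(W_{m,n}(t))\le C\,mn\,\mathbb{P}(X-\mu>t)$ while $\E W_{m,n}(t)=mn\,\mathbb{P}(X-\mu>t)$, so $\mathbb{P}(\max_{i\le m,j\le n}(X_{i,j}-\mu)>t)\ge c\min(1,mn\,\mathbb{P}(X-\mu>t))$, and, modulo the monotonicity caveat below, symmetrically for the lower tail. Feeding this back yields $\sum_{m,n}(mn)^{\alpha p-1}\mathbb{P}(|X-\mu|>4\varepsilon(mn)^{\alpha})<\infty$ on the range $mn\,\mathbb{P}(\cdots)\le1$, whence $\E(|X-\mu|^{p}\log|X-\mu|)<\infty$; and since a truncated-variance bound from $(H_2)$ forces a weak law $S_{m,n}/(mn)\to\E X$, a nonzero drift $\E X-\mu$ would make $\max_{u,v}|S_{u,v}-\mu uv|$ exceed $\varepsilon(mn)^{\alpha}$ with probability bounded away from $0$, contradicting summability because $\sum_{m,n}(mn)^{\alpha p-2}=\infty$ when $\alpha p\ge1$, so $\E X=\mu$ and \eqref{mz04} follows. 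The delicate points to watch are that Condition $(H_{2q})$ is stated for increasing functions, so the two tails must be treated separately (or under a symmetric version of the condition), and that the heavy-tail range $mn\,\mathbb{P}(\cdots)>1$ must be absorbed using a preliminary lower-order moment extracted from the single-entry estimate $\mathbb{P}(|X-\mu|>4\varepsilon(mn)^{\alpha})\le\mathbb{P}(\max_{u,v}|S_{u,v}-\mu uv|>\varepsilon(mn)^{\alpha})$, which confines that range to finitely many blocks once the moment is large enough and otherwise requires a bootstrap; establishing the Paley--Zygmund lower bound in a form robust to the dependence is the main obstacle in the converse.
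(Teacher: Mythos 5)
Your converse argument is sound and is essentially the paper's own: the mixed second difference recovering single entries, a second-moment (Chung--Erd\H{o}s/Paley--Zygmund) lower bound on the maximum whose variance control comes from Condition $(H_2)$ applied to the two tails separately (the paper packages this as Lemma \ref{lem.BC}, via Proposition 2.5 of \cite{thanh2023hsu}), and then the identification $\E X=\mu$ by comparing the two centerings through a law of large numbers. Your worry about the range $mn\,\mathbb{P}(\cdot)>1$ resolves itself without any bootstrap: summability of \eqref{mz05.converse} forces $\mathbb{P}(\max_{i\le m,j\le n}|X_{i,j}-\mu|>\varepsilon(mn)^{\alpha})\to 0$ as $m\vee n\to\infty$, so outside the finite set $\{(m,n):m\vee n<n_0\}$ the second-moment bound is already in its useful regime.

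The sufficiency half, however, has a genuine gap at exactly the step you call the crux. You invoke, for the block-truncated variables $Y_{i,j}$, the inequality
\begin{equation*}
\E\left(\max_{\substack{u\le m\\ v\le n}}\left|\sum_{i=1}^{u}\sum_{j=1}^{v}(Y_{i,j}-\E Y_{i,j})\right|^{2q}\right)\le C(q)\left(\sum_{i=1}^{m}\sum_{j=1}^{n}\E|Y_{i,j}|^{2q}+\left(\sum_{i=1}^{m}\sum_{j=1}^{n}\E Y_{i,j}^{2}\right)^{q}\right),
\end{equation*}
i.e.\ a genuine Rosenthal-type maximal inequality, and all of your series estimates are computed from it. But this inequality does not follow from Condition $(H_{2q})$, and in the case $q=1$ --- which is precisely the hypothesis of the theorem for $1\le p<2$, including the Marcinkiewicz--Zygmund regime --- it is provably false: pairwise independent random variables satisfy Condition $(H_2)$, yet the Kolmogorov--Doob maximal inequality \eqref{kd.inequality} fails for them (this is why the Kolmogorov SLLN for non-identically distributed pairwise independent variables fails, Cs\"{o}rg\H{o}--Tandori--Totik, and why Hsu--Robbins fails even for bounded pairwise independent variables, Szynal). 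Truncation does not rescue it, since the counterexamples can be taken bounded. Rio's device does \emph{not} supply the inequality in the form you use; what it supplies (Theorems \ref{thm.maximal.ineq1} and \ref{thm.maximal.ineq2}) is a strictly weaker, multi-scale substitute in which the truncation level varies with the dyadic scale ($X_{s+t,i,j}$ truncated at $b_{2^{s+t}}$, not at the block level $b_{2^{m+n}}$), the right-hand side carries weights $\lambda_{m,n,s,t}$ and the factor $a_{m,n}^{2q}\sum_{s,t}2^{m+n}\lambda_{m,n,s,t}^{-2q}(\cdots)$, and there is an extra additive tail term $\left(\sum_{s,t}2^{s+t}b_{2^{s+t}}\max_{i,j}\mathbb{P}(X_{i,j}>b_{2^{s+t-2}})\right)^{2q}$ together with the side conditions \eqref{max22}--\eqref{max23} that must be verified before the probability bound \eqref{max21} applies. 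The substance of the paper's proof of \eqref{mz05} is exactly this scale-dependent bookkeeping (choice of $a$ with $\alpha p/(2q)<a<\alpha$, verification of \eqref{max22}--\eqref{max23}, and the sums $I_1,I_2$ resp.\ $J_1,J_2,J_3$). That your exponent arithmetic lands on the correct threshold $q>(\alpha p-1)/(2\alpha-1)$ and on $\E(|X|^p\log|X|)$ is expected --- the true inequality was engineered to give the same rates --- but it does not validate the step: as written, your proof of the direct half rests on an inequality that contradicts known counterexamples under the theorem's hypotheses.
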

	
	The proof of Theorem \ref{thm.BKHRE} will be presented in Section \ref{sec.bk}.	Similar to the case of dimension one (see, e.g., Remark 1 in \cite{dedecker2008convergence}), we have the following remark.
	
	\begin{remark}\label{rem:slln1}
		For arbitrary array $\{X_{m,n},m\ge1,n\ge1\}$ of integrable random variables, by writing
		\[	
		\begin{split}
			&\sum_{m=1}^{\infty}\sum_{n=1}^{\infty}
			(mn)^{\alpha p-2}\mathbb{P}\left(\max_{\substack{1\le u\le m\\ 1\le v\le n}}\left|
			\sum_{i=1}^{u}\sum_{j=1}^{v}(X_{i,j}-\E X_{i,j})\right|>\varepsilon (mn)^{\alpha}\right)\\
			&=\sum_{k=1}^{\infty}\sum_{\ell=1}^{\infty}\sum_{m=2^{k-1}}^{2^{k}-1}\sum_{n=2^{\ell-1}}^{2^{\ell}-1}
			(mn)^{\alpha p-2}\mathbb{P}\left(\max_{\substack{1\le u\le m\\ 1\le v\le n}}\left|
			\sum_{i=1}^{u}\sum_{j=1}^{v}(X_{i,j}-\E X_{i,j})\right|>\varepsilon (mn)^{\alpha}\right),
		\end{split}
		\]
		we easily prove that
		\eqref{mz05} is equivalent to
		\begin{equation}\label{mz02}
			\sum_{k=1}^{\infty}\sum_{\ell=1}^{\infty}
			2^{(k+\ell)(\alpha p-1)}\mathbb{P}\left(\max_{u< 2^k,v< 2^\ell}\left|\sum_{i=1}^{u}\sum_{j=1}^{v}(X_{i,j}-\E X_{i,j})\right|>\varepsilon 2^{(k+\ell)\alpha}\right)<\infty \text{ for all } \varepsilon>0.
		\end{equation}
		Since $\alpha p\ge 1$, \eqref{mz02} together with the Borel--Cantelli lemma imply
		\begin{equation*}
			\lim_{k\vee \ell\to\infty}\dfrac{\max_{1\le u< 2^k,1\le v< 2^\ell}\left|\sum_{i=1}^{u}\sum_{j=1}^{v}(X_{i,j}-\E X_{i,j})\right|}{2^{(k+\ell)\alpha}}=0 \ \text{ almost surely (a.s.),}
		\end{equation*}
		which, in turn, implies
		\begin{equation}\label{erem3}
			\lim_{m\vee n\to\infty}\dfrac{\max_{1\le u\le m,1\le v\le n}\left|\sum_{i=1}^{u}\sum_{j=1}^{v}(X_{i,j}-\E X_{i,j})\right|}{(mn)^{\alpha}}=0 \ \text{ a.s.}
		\end{equation}
		If $1\le p<2$, then by choosing $\alpha=1/p$ in \eqref{erem3}, we obtain the Marcinkiewicz--Zygmund SLLN.
	\end{remark}

	We will now present the Feller WLLN and the Pyke--Root theorem for the maximum of double sums of random variables satisfying Condition $(H_{2})$.
	For the Feller WLLN for partial sums from sequences of i.i.d. random variables, we refer to Feller \cite[Theorem 1, Section VII.7]{feller1971introduction}. The proofs
	of Theorems \ref{thm.Feller} and \ref{thm.PR} are presented in Section \ref{sec.feller-pyke-root}.
	
	\begin{theorem}\label{thm.Feller}
		Let $1\le p<2$ and let 
		$\{X_{m,n}, \ m\ge 1,\ n \geq 1\}$ be a double array of random variables satisfying Condition $(H_{2})$.
		For $n\ge1,i\ge1,j\ge1$, set
		\[b_{n}=n^{1/p},\ Z_{n,i,j}=X_{i,j}\mathbf{1}\left(|X_{i,j}|\le b_{n}\right).\]
		If $\{X_{m,n}, \ m\ge 1,\ n \geq 1\}$ is stochastically dominated by a
		random variable $X$ satisfying
		\begin{equation}\label{fel03}
			n\mathbb{P}(|X|>n^{1/p})\to 0 \text{ as }n\to\infty,
		\end{equation}	
		then 
		\begin{equation}\label{fel04}
			\frac{\max_{u\le m,v\le n}\left|\sum_{i=1}^{u}\sum_{j=1}^{v}(X_{i,j}-\E Z_{mn,i,j})\right|}{(mn)^{1/p}}\overset{\mathbb{P}}{\to} 0 \text{ as }m\vee n\to\infty.
		\end{equation}
		Conversely, if $X_{m,n},m\ge1,n\ge1$ are symmetric and have the same distribution as a random variable $X$, then \eqref{fel04} implies \eqref{fel03}.
	\end{theorem}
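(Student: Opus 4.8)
The plan is to establish \eqref{fel04} by a truncation argument resting on the variance inequality furnished by Condition $(H_2)$ (the case $q=1$), and to derive the converse by reducing to a single index. Write $N=mn$ and $b=(mn)^{1/p}$, and replace the hard truncation $Z_{N,i,j}=X_{i,j}\mathbf{1}(|X_{i,j}|\le b)$ by the \emph{monotone} truncation $g_{b}(x)=(x\wedge b)\vee(-b)$. Since $g_{b}$ is nondecreasing, Condition $(H_2)$ and the associated maximal inequality for double sums apply to the family $\{g_{b}(X_{i,j})\}$, whereas they do not apply to $Z_{N,i,j}$ because $x\mapsto x\mathbf{1}(|x|\le b)$ is not monotone. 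Accordingly I would split, for $i\le m$ and $j\le n$,
\[
X_{i,j}-\E Z_{N,i,j}=\bigl(g_{b}(X_{i,j})-\E g_{b}(X_{i,j})\bigr)+\bigl(X_{i,j}-g_{b}(X_{i,j})\bigr)+\bigl(\E g_{b}(X_{i,j})-\E Z_{N,i,j}\bigr),
\]
and treat the maximum of the corresponding double sums term by term.

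The second and third summands are elementary. The middle term vanishes on $\{|X_{i,j}|\le b\}$, so its double sum is identically zero on $\bigcap_{i\le m,\,j\le n}\{|X_{i,j}|\le b\}$, whose complement has probability at most $mn\,\mathbb{P}(|X|>b)=N\,\mathbb{P}(|X|>N^{1/p})\to0$ by \eqref{fel03}; hence this contribution is $0$ with probability tending to $1$. The last term is deterministic, and since $|g_{b}(x)-x\mathbf{1}(|x|\le b)|\le b\,\mathbf{1}(|x|>b)$ we get $|\E g_{b}(X_{i,j})-\E Z_{N,i,j}|\le b\,\mathbb{P}(|X|>b)$, so its maximal double sum, divided by $b$, is at most $N\,\mathbb{P}(|X|>N^{1/p})\to0$.

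The core is the first, centered and monotone, summand. Here I would invoke the double-sum maximal inequality established earlier in the paper (a substitute for the Kolmogorov--Doob inequality, which is unavailable under $(H_2)$), applied to the increasing functions $g_{b}$, to bound
\[
\E\Bigl(\max_{u\le m,\,v\le n}\bigl|\textstyle\sum_{i=1}^{u}\sum_{j=1}^{v}\bigl(g_{b}(X_{i,j})-\E g_{b}(X_{i,j})\bigr)\bigr|^{2}\Bigr)\le C\,mn\,\max_{i\le m,\,j\le n}\E g_{b}^{2}(X_{i,j})\le C\,mn\,\E\min(X^{2},b^{2}),
\]
the last inequality following from stochastic domination since $\min(x^{2},b^{2})$ is nondecreasing in $|x|$. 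After dividing by $b^{2}=N^{2/p}$ and applying Markov's inequality, everything reduces to the key analytic estimate
\[
N^{1-2/p}\,\E\min(X^{2},N^{2/p})=N^{1-2/p}\E\bigl(X^{2}\mathbf{1}(|X|\le N^{1/p})\bigr)+N\,\mathbb{P}(|X|>N^{1/p})\longrightarrow0.
\]
The second term is exactly \eqref{fel03}; for the first I would use $\E\bigl(X^{2}\mathbf{1}(|X|\le N^{1/p})\bigr)\le\int_{0}^{N^{1/p}}2t\,\mathbb{P}(|X|>t)\,dt$, split the integral at a large threshold, and invoke $t^{p}\mathbb{P}(|X|>t)\to 0$ together with $2/p>1$ to conclude. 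Combining the three pieces by the triangle inequality yields \eqref{fel04}.

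For the converse, suppose \eqref{fel04} holds with the $X_{m,n}$ symmetric and distributed as $X$. Then $Z_{N,i,j}$ is symmetric, so $\E Z_{N,i,j}=0$ and \eqref{fel04} becomes $\max_{u\le m,\,v\le n}|S_{u,v}|/(mn)^{1/p}\overset{\mathbb{P}}{\to}0$. Taking $n=1$ reduces to one index; from $X_{i,1}=S_{i,1}-S_{i-1,1}$ we get $\max_{i\le m}|X_{i,1}|\le 2\max_{u\le m}|S_{u,1}|$, hence $\max_{i\le m}|X_{i,1}|/m^{1/p}\overset{\mathbb{P}}{\to}0$. By the i.i.d. structure $\mathbb{P}(\max_{i\le m}|X_{i,1}|\le\varepsilon m^{1/p})=\bigl(1-\mathbb{P}(|X|>\varepsilon m^{1/p})\bigr)^{m}$, which tends to $1$ if and only if $m\,\mathbb{P}(|X|>\varepsilon m^{1/p})\to0$; taking $\varepsilon=1$ gives \eqref{fel03}. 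The main obstacle is the first summand: because the Kolmogorov--Doob inequality fails under $(H_2)$, the whole argument rests on the maximal inequality proved earlier via Rio's method, and the delicate point is verifying that the \emph{mere} Feller condition \eqref{fel03}—rather than a genuine moment assumption—already forces the truncated second moment above to vanish.
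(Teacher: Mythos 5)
Your treatment of the two easy terms (the difference $X_{i,j}-g_b(X_{i,j})$ and the centering discrepancy $\E g_b(X_{i,j})-\E Z_{N,i,j}$) is sound and mirrors Steps 1 and 2 of the paper's proof, and your target estimate $N^{1-2/p}\,\E\min(X^2,N^{2/p})\to 0$ is indeed what must emerge at the end. The fatal gap is the core step. The inequality you invoke,
\[
\E\Bigl(\max_{u\le m,\,v\le n}\bigl|\textstyle\sum_{i=1}^{u}\sum_{j=1}^{v}\bigl(g_{b}(X_{i,j})-\E g_{b}(X_{i,j})\bigr)\bigr|^{2}\Bigr)\le C\,mn\,\max_{i\le m,\,j\le n}\E g_{b}^{2}(X_{i,j}),
\]
with a \emph{single} truncation level $b=(mn)^{1/p}$, is exactly the Kolmogorov--Doob-type maximal inequality \eqref{kd.inequality.doule} for the family $\{g_b(X_{i,j})\}$ (which still satisfies Condition $(H_2)$, since $g_b$ is increasing). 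No such inequality is ``established earlier in the paper,'' and none can be: Condition $(H_2)$ bounds only the second moment of the plain double sum, and, as the introduction emphasizes (citing \cite{csorgo1983strong}), a Kolmogorov--Doob-type maximal inequality fails even for pairwise independent random variables, which satisfy $(H_2)$. What Theorem \ref{thm.maximal.ineq1} (with $q=1$) actually provides is structurally different: its left-hand side does involve the top-level truncation, but its right-hand side is a weighted sum over all dyadic scales $(s,t)$ of second moments $\E X_{s+t,i,j}^{2}$ of variables truncated at the scale-dependent levels $b_{2^{s+t}}$, plus a separate tail-probability term. You cannot collapse that bound into yours: crudely replacing each $\E X_{s+t,i,j}^{2}$ by its top-level value leaves the factor $a_{m,n}^{2}\sum_{s,t}\lambda_{m,n,s,t}^{-2}$, which by the Cauchy--Schwarz inequality is at least the square of the number of scales times a divergent power of $2$, so the resulting estimate is far weaker than the one you claim. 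The convergence to zero must instead be extracted scale by scale, exploiting that the truncated second moments at low scales are small, via the two-dimensional Toeplitz argument of \eqref{fel27}--\eqref{fel37}; this multi-scale bookkeeping is the substance of Rio's method and is precisely what your proposal skips.

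The converse also contains a genuine error: the identity $\mathbb{P}(\max_{i\le m}|X_{i,1}|\le\varepsilon m^{1/p})=\bigl(1-\mathbb{P}(|X|>\varepsilon m^{1/p})\bigr)^{m}$ requires \emph{joint} independence of $X_{1,1},\ldots,X_{m,1}$, which is not available: Condition $(H_2)$ is implied by mere pairwise independence and carries no product structure for probabilities of intersections. The paper replaces this step by Lemma \ref{lem.BC}, a second-moment (Chung--Erd\H{o}s-type) estimate valid under $(H_2)$ (via Proposition 2.5 of \cite{thanh2023hsu}), which yields $mn\,\mathbb{P}(|X_{1,1}|>\varepsilon b_{m,n})\le C\,\mathbb{P}\bigl(\max_{k\le m,\,\ell\le n}|X_{k,\ell}|>\varepsilon b_{m,n}\bigr)$ for all large $m\vee n$; combined with the convergence in probability of the normalized maximum of the $|X_{i,j}|$ this gives \eqref{fel03}. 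Your reduction to the row $n=1$ is harmless in itself, but the product formula must be replaced by such a dependence-robust lower bound for the probability of a union of events.
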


	\begin{theorem}\label{thm.PR}
		Let $1\le p<2$ and let 
		$\{X_{m,n}, \ m\ge 1,\ n \geq 1\}$ be a double array of random variables satisfying Condition $(H_{2})$.
		If $\{X_{m,n}, \ m\ge 1,\ n \geq 1\}$ is stochastically dominated by a random variable $X$ satisfying
		\begin{equation}\label{pr03}
			\E |X|^{p}<\infty,
		\end{equation}
		then 
		\begin{equation}\label{pr04}
			\frac{\max_{u\le m,v\le n}\left|\sum_{i=1}^{u}\sum_{j=1}^{v}(X_{i,j}-\E X_{i,j})\right|}{(mn)^{1/p}}\overset{\mathcal{L}_p}{\to} 0 \text{ as }m\vee n\to\infty.
		\end{equation}
		Conversely, if the random variables $X_{m,n},m\ge1,n\ge1$ have the same distribution functions as a random variable $X$ and
		\begin{equation}\label{pr05}
			\frac{\max_{u\le m,v\le n}\left|\sum_{i=1}^{u}\sum_{j=1}^{v}(X_{i,j}-\mu)\right|}{(mn)^{1/p}}\overset{\mathcal{L}_p}{\to} 0 \text{ as }m\vee n\to\infty
		\end{equation}
		for some real number $\mu$, then $\E X=\mu$ and \eqref{pr03} holds.
	\end{theorem}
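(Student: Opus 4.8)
The plan is to reduce the direct half of \eqref{pr04} to the moment statement $(mn)^{-1}\E(\max_{u\le m,v\le n}|S^{c}_{u,v}|^{p})\to 0$, where $S^{c}_{u,v}=\sum_{i\le u}\sum_{j\le v}(X_{i,j}-\E X_{i,j})$, and to settle the converse by soft arguments that need no dependence hypothesis. For the converse, convergence in $\mathcal{L}_{p}$ in \eqref{pr05} forces the $\mathcal{L}_{p}$-norm to be finite for all large $m\vee n$; since the running maximum dominates its $(1,1)$ entry, $\max_{u\le m,v\le n}|\sum_{i\le u}\sum_{j\le v}(X_{i,j}-\mu)|\ge|X_{1,1}-\mu|$, and as $X_{1,1}$ has the distribution of $X$ this yields $\E|X-\mu|^{p}<\infty$, hence \eqref{pr03}. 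To identify the centering I would combine $\E(\sum_{i\le m}\sum_{j\le n}(X_{i,j}-\mu))=mn(\E X-\mu)$ with the bound $|mn(\E X-\mu)|\le\E\max_{u\le m,v\le n}|\cdots|\le(mn)^{1/p}o(1)$ read off from \eqref{pr05}; dividing by $mn$ and letting $m\vee n\to\infty$ forces $\E X=\mu$.

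The substance lies in the direct half. First I would truncate at level $b=(mn)^{1/p}$, writing $X_{i,j}-\E X_{i,j}=(X'_{i,j}-\E X'_{i,j})+(X''_{i,j}-\E X''_{i,j})$ with $X'_{i,j}=X_{i,j}\mathbf{1}(|X_{i,j}|\le b)$ and $X''_{i,j}=X_{i,j}\mathbf{1}(|X_{i,j}|>b)$, and split the numerator of \eqref{pr04} by the triangle inequality in $\mathcal{L}_{p}$. For the bounded part I would pass from $\mathcal{L}_{p}$ to $\mathcal{L}_{2}$ (legitimate since $p<2$) and apply the Rosenthal-type maximal inequality furnished by Rio's method under Condition $(H_{2})$ (the case $q=1$), carried out blockwise over dyadic rectangles in the spirit of Remark \ref{rem:slln1} so that the logarithmic factors intrinsic to the two-dimensional maximal inequality stay under control. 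By \eqref{sto.domination2} and $\E|X|^{p}<\infty$ the second moments $\E((X'_{i,j})^{2})$ are of order $b^{2-p}o(1)$, the decisive inputs being $b^{p-2}\E(X^{2}\mathbf{1}(|X|\le b))\to 0$ and $b^{p}\Pro(|X|>b)\to 0$ as $b\to\infty$; together with the normalisation $(mn)^{1/p}=b$ this makes the bounded part vanish.

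The hard part will be the large-value part $\sum(X''_{i,j}-\E X''_{i,j})$, for which no second moment exists when $1<p<2$ and $\E X^{2}=\infty$, so the maximal inequality is unavailable for it. The deterministic centering is harmless: $(mn)^{-1/p}\max_{u,v}|\sum\E X''_{i,j}|\le (mn)^{1-1/p}\E(|X|\mathbf{1}(|X|>b))\le\E(|X|^{p}\mathbf{1}(|X|>b))\to 0$. For the random part I would drop the maximum through the crude bound $\max_{u\le m,v\le n}|\sum_{i\le u}\sum_{j\le v}X''_{i,j}|\le\sum_{i\le m}\sum_{j\le n}|X''_{i,j}|$ and then exploit that the expected number of nonzero summands, $\sum\Pro(|X_{i,j}|>b)\le\E(|X|^{p}\mathbf{1}(|X|>b))$, tends to $0$: on the event that at most one summand is nonzero one has $(\sum|X''_{i,j}|)^{p}\le\sum|X''_{i,j}|^{p}$, giving a contribution of order $\E(|X|^{p}\mathbf{1}(|X|>b))\to 0$, whereas on the rare event of two or more large summands one must pit the heavy factor $(mn)^{p-1}$ coming from $(\sum|X''_{i,j}|)^{p}\le(mn)^{p-1}\sum|X''_{i,j}|^{p}$ against the small probability of that event, controlled through the pairwise covariance information encoded in Condition $(H_{2})$. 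This balancing is the delicate point, and is where I expect the real work to lie.

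As an alternative and possibly cleaner route, I would note that $\E|X|^{p}<\infty$ implies \eqref{fel03}, so the Feller weak law (Theorem \ref{thm.Feller}) already gives convergence in probability of the same maximum once the two centerings $\E X_{i,j}$ and $\E Z_{mn,i,j}$ are reconciled, their difference being precisely the deterministic tail term $\sum\E X''_{i,j}$ shown above to vanish. One could then deduce \eqref{pr04} by combining this convergence in probability with uniform integrability of $\{((mn)^{-1/p}\max_{u\le m,v\le n}|S^{c}_{u,v}|)^{p}\}$, and the verification of that uniform integrability again reduces to the same truncation estimates governed by the maximal inequality, with the large-value contribution treated exactly as above.
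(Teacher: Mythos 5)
Your converse argument is correct and in fact a little more direct than the paper's: bounding $|X_{1,1}-\mu|$ by the maximum gives $\E|X-\mu|^p<\infty$, and the bound $|mn(\E X-\mu)|\le\E\bigl|\sum_{i\le m}\sum_{j\le n}(X_{i,j}-\mu)\bigr|\le\|\max\|_p=o((mn)^{1/p})$ identifies $\mu$ without re-invoking the sufficiency half (the paper instead reproves the direct part with centering $\E X$ and compares). The bounded part of your direct half is also workable in spirit, modulo the fact that Rio's method forces the truncation level to vary with the dyadic scale rather than sit at the single level $b=(mn)^{1/p}$. The genuine gap is exactly where you flag it: the large-value part on the event of two or more exceedances. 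Condition $(H_2)$ applied to a pair of increasing functions yields only an \emph{additive} covariance bound of the form $\cov\bigl(f(X_{i,j}),g(X_{k,l})\bigr)\le C\max\bigl(\E f^2,\E g^2\bigr)$; it does not decouple $\E\bigl[|X''_{i,j}|^p\mathbf{1}(|X_{k,l}|>b)\bigr]$ into anything close to $\E|X''_{i,j}|^p\,\Pro(|X_{k,l}|>b)$, which is what your "balancing" needs (with $f=|X''_{i,j}|^p\wedge c$ the factor $\E f^2$ can dwarf the product of first moments). Moreover the crude H\"{o}lder bound $(\sum|X''_{i,j}|)^p\le(mn)^{p-1}\sum|X''_{i,j}|^p$ is already too lossy: it produces $(mn)^{p-1}\E\bigl(|X|^p\mathbf{1}(|X|>(mn)^{1/p})\bigr)$, which diverges for $1<p<2$ when the tail moment decays slowly (e.g.\ logarithmically). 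Since your alternative route (Feller WLLN plus uniform integrability) defers to "the large-value contribution treated exactly as above", it inherits the same gap.

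The paper closes this gap with a different mechanism that you may want to note: it never isolates a "large-value" random sum at all. It writes $\E Y^p=\int_0^\infty\Pro\bigl(Y>z^{1/p}\bigr)\,\dx z$ for the normalized maximum $Y$, splits the integral at $z=1$ (the piece over $[0,1]$ vanishes by the Feller WLLN, Theorem \ref{thm.Feller}, and dominated convergence), and for $z\ge 1$ truncates at the \emph{$z$-dependent} level $z^{1/p}(mn)^{1/p}$. Then the entire contribution of large values enters only through the union bound
\begin{equation*}
\int_1^\infty\sum_{i\le 2^k}\sum_{j\le 2^\ell}\Pro\bigl(X_{i,j}>z^{1/p}b_{2^{k+\ell}}\bigr)\,\dx z
\le \E\bigl(|X|^p\mathbf{1}(|X|>b_{2^{k+\ell}})\bigr)\to 0,
\end{equation*}
which needs no dependence structure whatsoever, while the variables truncated at level $z^{1/p}b_{2^{k+\ell}}$ are handled by Tonelli's theorem together with the $(H_2)$-based maximal inequality (Theorem \ref{thm.maximal.ineq1} with $q=1$), exactly as in the $\mathcal{L}_2$ statement of Remark \ref{K3}. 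Replacing your fixed-level truncation and exceedance-counting by this tail-integral representation with growing truncation level is the missing idea; without it, the multiple-exceedance event cannot be controlled under Condition $(H_2)$ alone.
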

	
	\begin{remark}
		Since quadruple-wise independent random variables satisfy Condition $(H_{4})$,
		by applying Theorem \ref{thm.BKHRE} for the case where
		$p=2$, $\alpha=1$ and $q=2$, we obtain the Hsu--Robbins--Erd\"{o}s theorem for the maximum of
		partial sums from a double array of quadruple-wise independent and identically distributed (q.i.i.d.) random variables, that is, if 
		$\{X_{m,n}, \ m\ge 1,\ n \ge 1\}$ a double array of q.i.i.d. random variables, then 
		\begin{equation*}\label{hr03.main}
			\E X_{1,1}=0\ \text{ and }\ \E\left(X_{1,1}^2\log |X_{1,1}|\right)<\infty
		\end{equation*}
		if and only if
		\begin{equation*}\label{hr05.main}
			\sum_{m=1}^{\infty}\sum_{n=1}^{\infty}\mathbb{P}\left(\max_{u\le m,v\le n}\left|\sum_{i=1}^{u}\sum_{j=1}^{v}X_{i,j}\right|>\varepsilon mn\right)<\infty \text{ for all } \varepsilon>0.
		\end{equation*}
		Similarly, we can apply Theorems \ref{thm.BKHRE}, \ref{thm.Feller} and \ref{thm.PR}
		to obtain the necessary and sufficient conditions for (i) the Spitzer--Baum--Katz theorem (for the case where $1\le p<2$), (ii) the Feller WLLN, and (iii) the Pyke--Root theorem for 
		double arrays of p.i.i.d. random variables.
	\end{remark}

	\begin{remark}\label{rem.open}
		
		The Reviewer kindly raised a question about the possibility of obtaining additional results for
$2q$-tuplewise independent random fields (see, e.g., \cite{bradley2012possible} for the definition) when $q\geq 3$. To establish the validity of 
		Theorem \ref{thm.BKHRE} for double arrays of $2q$-tuplewise independent random variables, 
		we would need to show that a collection of $2q$-tuplewise independent random variables satisfies Condition $(H_{2q})$. 
		Unfortunately, we are unable to achieve this even in the case of $q=3$. 
		We present it here as an open problem for future research.
	\end{remark}
	
	The rest of the paper is organized as follows. 
	In Section \ref{sec.maximal.inequalities}, 
	we use Rio's technique to establish some maximal inequalities
	for double sums of dependent random variables. 
	The proof of Theorem \ref{thm.BKHRE} is presented in Section \ref{sec.bk}.
	Section \ref{sec.feller-pyke-root} contains the proof of Theorems \ref{thm.Feller} and \ref{thm.PR}.
	Section \ref{sec:remarks} presents some corollaries and remarks.
	Finally, some technical results are proved in the Appendix.

\section{New maximal inequalities for double sums of dependent random variables}\label{sec.maximal.inequalities}

As mentioned in Section \ref{sec.intro}, although the maximal inequalities are ``almost hidden'' in the proof of Rio \cite{rio1995vitesses},
his method can lead to a new maximal inequality for pairwise independent random variables.
A brief discussion about Rio's technique in dimension one is given as follows. For simplicity, we assume that
$\{X_n,n\ge1\}$ is a sequence of p.i.i.d. integrable random variables.
Let $1\le p<2$, $b_n=n^{1/p}$ and $X_{n,i}=X_{i}\mathbf{1}(|X_i|\le b_n)$, $1\le i\le n$, $n\ge1$.  
When proving limit theorems such as the Baum--Katz theorem or
the Pyke--Root theorem, it suffices to control the tail probability of the form
\begin{equation}\label{21}
	\mathbb{P}\left(\max_{1\le j <2^n}\left|S_{n,j}\right|>\varepsilon b_{2^n}\right),
\end{equation}
where $\varepsilon>0$ and $S_{n,j}=\sum_{i=1}^j (X_{2^n,i}-\E X_{2^n,i}),\ n\ge 0.$
Since the random variables are only assumed to be pairwise independent, we would not be able to apply the Kolmogorov maximal inequality.
In \cite{rio1995vitesses}, Rio used the telescoping sums:
\begin{equation}\label{23}
	S_{n,j}=\sum_{m=1}^{n}(S_{m-1,j_{m-1}}-S_{{m-1},j_{m}})+\sum_{m=1}^{n}(S_{m,j}-S_{m-1,j}-S_{m,j_m}+S_{m-1,j_m}),  1\le j<2^n, 0\le m\le n, n\ge 1,
\end{equation}
where $S_{m,0}=0$ and $j_m = \lfloor j/2^m\rfloor \times 2 ^m$.
For the first term on the right hand side of \eqref{23}, we have
\begin{equation}\label{24}
	\left|S_{m-1,j_{m-1}}-S_{{m-1},j_{m}}\right|\le \left|\sum_{i=j_m+1}^{j_m+2^{m-1}}\left(X_ {2^{m-1},i}-\E X_{2^{m-1},i}\right)\right|.
\end{equation}
From \eqref{23}, \eqref{24} and the definition of $j_m$, we can address the problem of bounding the tail probability in \eqref{21} by bounding
\begin{equation}\label{25}
	I=\mathbb{P}\left(\sum_{m=1}^n \max_{0\le k<2^{n-m}}\left|\sum_{i=k2^m+1}^{k2^m+2^{m-1}}\left(X_ {2^{m-1},i}-\E X_ {2^{m-1},i}\right)\right| \ge \varepsilon b_{2^{n}}\right).
\end{equation}
By writing $b_{2^n}=\sum_{m=1}^n \lambda_{n,m}$ with suitable choices of $\lambda_{n,m}$,
and using Chebyshev's inequality and the p.i.i.d. assumption, we have
\begin{equation*}\label{26}
	\begin{split}
		I&\le \varepsilon^{-2}\sum_{m=1}^n \lambda_{n,m}^{-2} \sum_{k=0}^{2^{n-m}-1}\E\left(\sum_{i=k2^m+1}^{k2^m+2^{m-1}}\left(X_ {2^{m-1},i}-\E X_ {2^{m-1},i}\right)\right)^2\\
		&\le  \varepsilon^{-2}\sum_{m=1}^n 2^n \lambda_{n,m}^{-2}\E\left(X_{1}^2\mathbf{1} (|X_1|\le b_{2^{m-1}})\right).
	\end{split}
\end{equation*}
Using a similar estimate for the second term on the right hand side of \eqref{23}, we will finally obtain the following bound for the tail probability in \eqref{21}:
\begin{equation}\label{28}
	\mathbb{P}\left(\max_{1\le j <2^n}\left|S_{n,j}\right|>\varepsilon b_{2^n}\right) \le  C \varepsilon^{-2}\sum_{m=1}^n 2^n \lambda_{n,m}^{-2}\E\left(X_{1}^2\mathbf{1} (|X_1|\le b_{2^{m-1}})\right)+\text{ a neligible term}.
\end{equation}
Inequality \eqref{28} will play the role of the Kolmogorov maximal inequality in proving the Baum--Katz theorem (for the case where $1\le p<2$ and $\alpha=1/p$). We refer to Rio \cite{rio1995vitesses}
and Th\`{a}nh \cite{thanh2020theBaum,thanh2023extension,thanh2023hsu} for detailed arguments.

In this section, we use Rio's technique to establish some maximal inequalities
for double sums of dependent random variables. 
The following theorem presents a Rosenthal-type maximal inequality.
We would like to note that in Theorem \ref{thm.maximal.ineq1}, the underlying random variables are not necessary integrable.

\begin{theorem}\label{thm.maximal.ineq1}
	Let $\{X_{i,j}, \ i\ge 1,\ j\geq 1\}$ be a double array of nonnegative random variables satisfying Condition $(H_{2q})$
	for some $q\ge 1$, let $\{b_n,n\ge1\}$ be an increasing sequence of positive constants and let
	$\{\lambda_{m,n,i,j},1\le i\le m,1\le j\le n,m\ge1, n\ge1\}$ be an array of positive constants.
	For $s\ge 0,\ m\ge 1, n\ge 1$, set
	\begin{equation*}
		a_{m,n}=\sum_{i=1}^{m}\sum_{j=1}^{n}\lambda_{m,n,i,j}\ \text{ and }\ 
		X_{s,m,n}=X_{m,n}\mathbf{1}\left(X_{m,n}\le b_{2^{s}}\right)+b_{2^{s}}\mathbf{1}\left(X_{m,n}> b_{2^{s}}\right).	
	\end{equation*}
	Then for all $m\ge 1,n\ge1$,
	\begin{equation}\label{max32}
		\begin{split}
			&\E\left(\max_{\substack{1\le u< 2^m\\ 1\le v< 2^n}}\left|\sum_{i=1}^{u}\sum_{j=1}^{v}(X_{m+n,i,j}-\E X_{m+n,i,j})\right|^{2q}\right) \le C(q)\left(\sum_{s=1}^{m}\sum_{t=1}^{n}2^{s+t}b_{2^{s+t}}\max_{\substack{1\le i< 2^{m}\\ 1\le j< 2^{n}}}\mathbb{P}\left(X_{i,j}>b_{2^{s+t-2}}\right)\right)^{2q}\\
			&\qquad\qquad\qquad+C(q) a_{m,n}^{2q} \sum_{s=1}^m\sum_{t=1}^n 2^{m+n} \lambda_{m,n,s,t}^{-2q} \left(\max_{\substack{1\le i<2^{m}\\ 1\le j<2^{n}}}\E X_{s+t,i,j}^{2q}+2^{(s+t)(q-1)}\max_{\substack{1\le i<2^{m}\\ 1\le j<2^{n}}}\left(\E X_{s+t,i,j}^{2}\right)^{q}\right).
		\end{split}
	\end{equation}
\end{theorem}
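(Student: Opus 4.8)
The plan is to lift Rio's one--dimensional telescoping \eqref{23} to the plane. Put $S_{u,v}=\sum_{i=1}^{u}\sum_{j=1}^{v}(X_{m+n,i,j}-\E X_{m+n,i,j})$ and, for $0\le s\le m$, $0\le t\le n$, let $u_{s}=\lfloor u/2^{s}\rfloor 2^{s}$ and $v_{t}=\lfloor v/2^{t}\rfloor 2^{t}$ be the dyadic truncations of the summation indices, so that $u_{0}=u$, $u_{m}=0$, $v_{0}=v$, $v_{n}=0$. First I would establish a two--parameter telescoping identity writing $S_{u,v}$ as the sum of two families of terms: a \emph{diagonal} family of block sums in which the truncation level of the summands is simultaneously lowered from $m+n$ to the combined scale $s+t$ (the planar analogue of the first sum in \eqref{23}, obtained by applying the one--dimensional bound \eqref{24} in each coordinate), and a \emph{discrepancy} family collecting the monotone increments $X_{\ell,i,j}-X_{\ell-1,i,j}$ created each time the truncation level is changed. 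The diagonal family will produce the Rosenthal term in \eqref{max32} and the discrepancy family the tail term.

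The crucial observation, which renders the partial order of the index set harmless, is that after lowering the scale each diagonal block is an \emph{aligned} rectangle with $\asymp 2^{s+t}$ indices, and that for fixed $(u,v)$ the identity selects exactly one such block per scale $(s,t)$. Writing $B^{s,t}_{k,\ell}$ for the centred sum of the truncated variables $X_{s+t,i,j}$ over the aligned block indexed by $(k,\ell)$ with $0\le k<2^{m-s}$, $0\le \ell<2^{n-t}$, the triangle inequality then yields, \emph{uniformly in} $(u,v)$,
\[
\max_{1\le u<2^{m},\,1\le v<2^{n}}|S_{u,v}|\ \le\ \sum_{s=1}^{m}\sum_{t=1}^{n}\ \max_{0\le k<2^{m-s},\,0\le \ell<2^{n-t}}\bigl|B^{s,t}_{k,\ell}\bigr|\ +\ R,
\]
where $R\ge 0$ is the contribution of the discrepancy family. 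Thus the two--dimensional maximum is dominated by a single sum over scales of block maxima, exactly as in \eqref{25}.

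For the Rosenthal term I would argue as follows, writing $\|Z\|_{2q}=(\E|Z|^{2q})^{1/(2q)}$ and $M_{s,t}=\max_{k,\ell}|B^{s,t}_{k,\ell}|$. Minkowski's inequality gives $\bigl\|\sum_{s,t}M_{s,t}\bigr\|_{2q}\le \sum_{s,t}\|M_{s,t}\|_{2q}$; raising to the power $2q$ and applying Hölder's inequality to the numerical sum against the weights $\lambda_{m,n,s,t}$, together with the trivial bound $\lambda_{m,n,s,t}\le a_{m,n}$, bounds $\E\bigl(\sum_{s,t}M_{s,t}\bigr)^{2q}$ by $a_{m,n}^{2q}\sum_{s,t}\lambda_{m,n,s,t}^{-2q}\,\E M_{s,t}^{2q}$. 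Since the truncation $x\mapsto x\wedge b_{2^{s+t}}$ is an increasing function, Condition $(H_{2q})$ applies to each block $B^{s,t}_{k,\ell}$ (a centred sum over $|I|\asymp 2^{s+t}$ summands); bounding the maximum over the $2^{m-s}\cdot 2^{n-t}$ positions by the sum then gives $\E M_{s,t}^{2q}\le C(q)\,2^{m+n}\bigl(\max_{i,j}\E X_{s+t,i,j}^{2q}+2^{(s+t)(q-1)}\max_{i,j}(\E X_{s+t,i,j}^{2})^{q}\bigr)$, which is precisely the second term of \eqref{max32}.

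The discrepancy family $R$ is where I expect the real difficulty to lie. Each increment $X_{\ell,i,j}-X_{\ell-1,i,j}$ is nonnegative (the truncations are monotone in the level), is bounded by $b_{2^{\ell}}$, and vanishes off the rare event $\{X_{i,j}>b_{2^{\ell-1}}\}$; stochastic domination controls these events uniformly through $\max_{i,j}\mathbb{P}(X_{i,j}>b_{2^{s+t-2}})$ once the levels are correctly tied to $s+t$, the shifted threshold $b_{2^{s+t-2}}$ being a safe over--estimate since $\mathbb{P}(X_{i,j}>\cdot)$ is nonincreasing and $b_{n}$ increasing. Collecting the scale--$(s,t)$ contribution over its $\asymp 2^{s+t}$ indices and applying Minkowski's inequality once more should yield the first term $\bigl(\sum_{s,t}2^{s+t}b_{2^{s+t}}\max_{i,j}\mathbb{P}(X_{i,j}>b_{2^{s+t-2}})\bigr)^{2q}$. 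The delicate point, and the one that genuinely complicates Rio's scheme in two dimensions, is to organise the telescoping so that at scale $(s,t)$ only $\asymp 2^{s+t}$ indices---rather than the full $2^{m+n}$ grid---feed into the discrepancy terms, and so that the truncation level is coupled to $s+t$ throughout; making these powers of two and the threshold shift come out correctly is the main bookkeeping obstacle.
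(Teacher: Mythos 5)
Your overall skeleton---the planar telescoping with dyadic truncations $u_{s},v_{t}$, the reduction of the two-dimensional maximum to a sum over scales of aligned-block maxima, and the H\"{o}lder-plus-Condition-$(H_{2q})$ treatment of the diagonal family---is exactly the paper's route (its Claim \ref{claim.estimate}, i.e.\ \eqref{mz30}, followed by \eqref{max34}--\eqref{max37}). But there is a genuine gap in your treatment of the discrepancy family, and it sits precisely at the point you flag as the main difficulty. The increments created by lowering the truncation level do not enter the telescoping as the raw nonnegative quantities $X_{\ell,i,j}-X_{\ell-1,i,j}$: since the partial sums being telescoped are centred, what actually appears is the \emph{centred} increments $Y^{*}_{\ell,i,j}=X^{*}_{\ell,i,j}-\E X^{*}_{\ell,i,j}$, with $X^{*}_{\ell,i,j}=X_{\ell,i,j}-X_{\ell-1,i,j}$. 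A centred random sum cannot be bounded in $\mathcal{L}_{2q}$ by the deterministic quantity $\sum_{s,t}2^{s+t}b_{2^{s+t}}\max_{i,j}\mathbb{P}(X_{i,j}>b_{2^{s+t-2}})$: already for a single summand, writing $X^{*}=b\mathbf{1}(A)$ with $\mathbb{P}(A)=p$ small, one has $\E|X^{*}-\E X^{*}|^{2q}\asymp b^{2q}p$, whereas the claimed bound is of order $(bp)^{2q}$, smaller by the factor $p^{2q-1}$. So the plan ``diagonal family $\to$ Rosenthal term, discrepancy family $\to$ tail term'' cannot work as stated; no choice of bookkeeping fixes this, because the obstruction is the mismatch between an $\mathcal{L}_{2q}$ norm and an expectation. (A minor additional point: no stochastic domination is assumed, or needed, in this theorem; the tail term involves $\max_{i,j}\mathbb{P}(X_{i,j}>b_{2^{s+t-2}})$ directly.)

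The repair is what the paper does: split each discrepancy term into its expectation and its centred random part. The expectations are deterministic, and by $0\le X^{*}_{n,i,j}\le b_{2^{n}}\mathbf{1}(X_{i,j}>b_{2^{n-1}})$ (the paper's \eqref{max14}) they sum, over the $\asymp 2^{s+t}$ indices of each block and over scales, to the first (tail) term of \eqref{max32}. The centred parts $Y^{*}$ must then be processed exactly like your diagonal family: $x\mapsto\min(x,b_{2^{n}})-\min(x,b_{2^{n-1}})$ is an increasing function, so Condition $(H_{2q})$ applies to blocks of $Y^{*}$; and since $0\le X^{*}_{n,i,j}\le X_{n,i,j}$, their moments are dominated by those of the truncated variables, so these contributions are absorbed into the \emph{second} (Rosenthal) term of \eqref{max32}. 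This is precisely the role of the paper's $R_{2},R_{3},R_{4}$ and the estimate \eqref{max37}; with that correction, your argument coincides with the paper's proof.
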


The next theorem is a maximal inequality for double sums of dependent integrable random variables.

\begin{theorem}\label{thm.maximal.ineq2}
	Let the assumptions of Theorem \ref{thm.maximal.ineq1} be satisfied. Assume further that the random variables $X_{m,n}$, $m\ge1,n\ge1$ are
	integrable. Then for all $\varepsilon>0$, $m\ge 1$, $n\ge1$, we have
	\begin{equation}\label{max21}
		\begin{split}
			&\mathbb{P}\left(\max_{\substack{1\le u< 2^m\\ 1\le v< 2^n}}\left|\sum_{i=1}^{u}\sum_{j=1}^{v}(X_{i,j}-\E X_{i,j})\right|\ge 3a_{m,n}\varepsilon\right)	\le \sum_{i=1}^{2^{m}}\sum_{j=1}^{2^{n}}\mathbb{P}\left(X_{i,j}>b_{2^{m+n}}\right)\\
			&\qquad\qquad\qquad+C(q)\varepsilon^{-2q}\sum_{s=1}^m\sum_{t=1}^n 2^{m+n} \lambda_{m,n,s,t}^{-2q} \left(\max_{\substack{1\le i<2^{m}\\ 1\le j<2^{n}}}\E X_{s+t,i,j}^{2q}+2^{(s+t)(q-1)}\max_{\substack{1\le i<2^{m}\\ 1\le j<2^{n}}}\left(\E X_{s+t,i,j}^{2}\right)^{q}\right),
		\end{split}
	\end{equation}
	provided
	\begin{equation}\label{max22}
		\sum_{i=1}^{2^{m}}\sum_{j=1}^{2^{n}}\E\left(X_{i,j}\mathbf{1}\left(X_{i,j}>b_{2^{m+n}}\right)\right)\le \varepsilon a_{m,n}
	\end{equation}
	and
	\begin{equation}\label{max23}
		6\sum_{s=1}^{m}\sum_{t=1}^{n}2^{s+t}b_{2^{s+t}}\max_{\substack{1\le i< 2^{m}\\ 1\le j< 2^{n}}}\mathbb{P}\left(X_{i,j}>b_{2^{s+t-2}}\right)\le \varepsilon a_{m,n}.
	\end{equation}
\end{theorem}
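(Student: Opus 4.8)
The plan is to reduce Theorem~\ref{thm.maximal.ineq2} to the moment estimate of Theorem~\ref{thm.maximal.ineq1} by truncating at the top level $b_{2^{m+n}}$ and splitting the target threshold $3a_{m,n}\varepsilon$ into three contributions of size $a_{m,n}\varepsilon$ each. Writing $Y_{i,j}=X_{m+n,i,j}$ for the truncation of $X_{i,j}$ at level $b_{2^{m+n}}$, I would first decompose, for $1\le u<2^m$ and $1\le v<2^n$,
\[\sum_{i=1}^u\sum_{j=1}^v(X_{i,j}-\E X_{i,j})=\sum_{i=1}^u\sum_{j=1}^v(Y_{i,j}-\E Y_{i,j})+\sum_{i=1}^u\sum_{j=1}^v(X_{i,j}-Y_{i,j})-\sum_{i=1}^u\sum_{j=1}^v\E(X_{i,j}-Y_{i,j}).\]
Since the $X_{i,j}$ are nonnegative, $X_{i,j}-Y_{i,j}=(X_{i,j}-b_{2^{m+n}})\mathbf{1}(X_{i,j}>b_{2^{m+n}})\ge 0$, so the third sum is deterministic and, after maximizing over $u,v$, the second sum is dominated by $\sum_{i<2^m}\sum_{j<2^n}(X_{i,j}-Y_{i,j})$.

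Next I would dispose of the two truncation pieces. For the deterministic piece I would use $\E(X_{i,j}-Y_{i,j})\le\E(X_{i,j}\mathbf{1}(X_{i,j}>b_{2^{m+n}}))$ together with hypothesis \eqref{max22} to bound $\sum_{i<2^m}\sum_{j<2^n}\E(X_{i,j}-Y_{i,j})$ by $\varepsilon a_{m,n}$. For the nonnegative random piece I note that $\sum_{i<2^m}\sum_{j<2^n}(X_{i,j}-Y_{i,j})$ is strictly positive only on $\bigcup_{i,j}\{X_{i,j}>b_{2^{m+n}}\}$, so a union bound yields $\mathbb{P}\big(\sum(X_{i,j}-Y_{i,j})\ge a_{m,n}\varepsilon\big)\le\sum_{i=1}^{2^m}\sum_{j=1}^{2^n}\mathbb{P}(X_{i,j}>b_{2^{m+n}})$, which is exactly the first term on the right of \eqref{max21}.

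It remains to control $\max_{u,v}|\sum(Y_{i,j}-\E Y_{i,j})|$, and this is the crux. I would \emph{not} apply Markov's inequality to the full moment bound of Theorem~\ref{thm.maximal.ineq1}: doing so would convert its first term $\big(\sum_{s,t}2^{s+t}b_{2^{s+t}}\max\mathbb{P}(X_{i,j}>b_{2^{s+t-2}})\big)^{2q}$, even after using \eqref{max23}, into a useless $O(1)$ constant rather than a vanishing quantity. Instead I would revisit the Rio-type dyadic decomposition underlying the proof of Theorem~\ref{thm.maximal.ineq1} and isolate, \emph{before} taking moments, the deterministic cross-scale bias $6\sum_{s,t}2^{s+t}b_{2^{s+t}}\max\mathbb{P}(X_{i,j}>b_{2^{s+t-2}})$ from the genuine fluctuation $W$. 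The former is precisely the quantity that hypothesis \eqref{max23} bounds by $\varepsilon a_{m,n}$, while the latter obeys $\E W^{2q}\le C(q)a_{m,n}^{2q}\sum_{s,t}2^{m+n}\lambda_{m,n,s,t}^{-2q}(\cdots)$, the second term of Theorem~\ref{thm.maximal.ineq1}. Applying Markov's inequality of order $2q$ to $W$ alone then gives $\mathbb{P}(W\ge a_{m,n}\varepsilon)\le C(q)\varepsilon^{-2q}\sum_{s,t}2^{m+n}\lambda_{m,n,s,t}^{-2q}(\cdots)$, the second term on the right of \eqref{max21}.

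Collecting the three pieces, the event $\{\max_{u,v}|\sum(X_{i,j}-\E X_{i,j})|\ge 3a_{m,n}\varepsilon\}$ forces at least one of: the upper-tail sum to exceed $a_{m,n}\varepsilon$, the deterministic bias together with the expected tail (each $\le\varepsilon a_{m,n}$, by \eqref{max23} and \eqref{max22}) to exceed $2a_{m,n}\varepsilon$ (which is impossible), or the fluctuation $W$ to exceed $a_{m,n}\varepsilon$; this is exactly what produces the factor $3$ in the threshold and the factor $6$ in \eqref{max23}. The main obstacle is the separation carried out in the previous paragraph: the first term of Theorem~\ref{thm.maximal.ineq1} is a deterministic bias that must be peeled off and absorbed into the threshold via \eqref{max23}, and cannot be handled through Markov's inequality. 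Consequently the argument cannot treat the moment inequality \eqref{max32} purely as a black box, but must rely on the intermediate pointwise decomposition furnished by its proof; the role of Condition $(H_{2q})$ enters only through the fluctuation moment supplied by Theorem~\ref{thm.maximal.ineq1}.
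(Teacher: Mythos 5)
Your proposal is correct and follows essentially the same route as the paper: truncation at $b_{2^{m+n}}$ with a union bound over the tail events, absorption of the two deterministic terms into the threshold via \eqref{max22} and \eqref{max23}, and Markov's inequality of order $2q$ applied to the Rio fluctuation $\sum_{i=1}^{4}R_i$ using the intermediate moment bounds from the proof of Theorem \ref{thm.maximal.ineq1} (the paper's Claim \ref{claim.estimate} is precisely your pointwise separation of the deterministic cross-scale bias from the fluctuation). In particular, your observation that \eqref{max32} cannot be used as a black box---since after \eqref{max23} the $2q$-th power of the bias term would only produce a useless $O(1)$ constant under Markov's inequality---is exactly why the paper, too, works with the decomposition inside that proof rather than with its final statement.
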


\begin{remark}
	Before presenting the proof of Theorems \ref{thm.maximal.ineq1} and \ref{thm.maximal.ineq2}, we would like to provide some comments on these results. The maximal inequality \eqref{max32} can be regarded as a Rosenthal-type maximal inequality for double sums of truncated random variables. Theorem \ref{thm.maximal.ineq2} may be compared to Theorem 1.2 of Shao \cite{shao1995maximal}, which establishes a Rosenthal-type maximal inequality for $\rho$-mixing sequences. In proving the laws of large numbers for the maximum of the partial sums, we first choose $\lambda_{m,n,i,j}$ such that $a_{2^{m},2^{n}}\asymp b_{2^{m+n}}$. Then, under some moment conditions, \eqref{max22} and \eqref{max23} are satisfied, and the first term on the right-hand side of \eqref{max21} can be shown to be negligible. We are left with the last term on the right-hand side of \eqref{max21}, which can be controlled using moment calculations as in the usual proofs of laws of large numbers.
\end{remark}
\begin{proof}[Proof of Theorem \ref{thm.maximal.ineq1}]
	For $m,n,i,j\ge 1$, set
	\[\begin{split}
		X_{n,i,j}^{*}&=X_{n,i,j}-X_{n-1,i,j},\\
		Y_{n,i,j}^{*}&=X_{n,i,j}^{*}-\E X_{n,i,j}^{*},\\
		S_{k,\ell,i,j}&=\sum_{u=1}^{i}\sum_{v=1}^{j}\left(X_{k+\ell,u,v}-\E X_{k+\ell,u,v}\right), S_{k,\ell,0,j}=S_{k,\ell,i,0}=0,\ k\ge0,\ell\ge0,\\
		R_1(m,n)&=\sum_{s=1}^m\sum_{t=1}^n \max_{\substack{0\le k< 2^{m-s}\\ 0\le \ell< 2^{n-t}}}\left|\sum_{i=k2^s+1}^{k2^s+2^{s-1}}\sum_{j=\ell 2^t+1}^{\ell 2^t+2^{t-1}}(X_{s+t-2,i,j}-\E X_{s+t-2,i,j})\right|,\\
		R_2(m,n)&=\sum_{s=1}^{m}\sum_{t=1}^n \max_{\substack{0\le k< 2^{m-s}\\ 0\le \ell< 2^{n-t}}} \left|\sum_{i= k2^s+1}^{k2^s+2^{s-1}} \sum_{j=\ell 2^t+1}^{\ell 2^t+2^t} Y_{s+t-1,i,j}^{*}\right|,\\		
		R_3(m,n)&=\sum_{s=1}^{m}\sum_{t=1}^n \max_{\substack{0\le k< 2^{m-s}\\ 0\le \ell< 2^{n-t}}}\left| \sum_{i= k2^s+1}^{k2^s+2^{s}} \sum_{j=\ell 2^t+1}^{\ell 2^t+2^{t-1}} Y_{s+t-1,i,j}^{*}\right|,\\
		R_4(m,n)&=\sum_{s=1}^{m}\sum_{t=1}^n \max_{\substack{0\le k< 2^{m-s}\\ 0\le \ell< 2^{n-t}}} \left|\sum_{i= k2^s+1}^{k2^s+2^{s}} \sum_{j=\ell 2^t+1}^{\ell 2^t+2^{t}} \left(Y_{s+t,i,j}^{*}+Y_{s+t-1,i,j}^{*}\right)\right|.
	\end{split}\]
	Since the sequence $\{b_{n},n\ge1\}$ is increasing and $X_{i,j}$ are nonnegative,
	\begin{equation}\label{max14}
		0\le X_{n,i,j}^{*} \le b_{2^{n}}\mathbf{1}(X_{i,j}>b_{2^{n-1}}),\ n,i,j\ge 1.
	\end{equation}
	For simplicity, we write $R_i$ for $R_i(m,n)$. We will need the following claim whose proof is postponed to the Appendix.
	\begin{claim}\label{claim.estimate}
		For all $m\ge 1,n\ge1$,
		\begin{equation}\label{mz30}
			\begin{split}
				\max_{\substack{1\le u< 2^m\\ 1\le v< 2^n}}\left|S_{m,n,u,v}\right| &\le \sum_{i=1}^{4}R_{i}+6\sum_{s=1}^{m}\sum_{t=1}^{n}2^{s+t}b_{2^{s+t}}\max_{\substack{1\le i< 2^{m}\\ 1\le j< 2^{n}}}\mathbb{P}\left(X_{i,j}>b_{2^{s+t-2}}\right).
			\end{split}
		\end{equation}
	\end{claim}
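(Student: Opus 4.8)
The plan is to derive \eqref{mz30} from a pathwise decomposition of $S_{m,n,u,v}$ in which the random fluctuations are absorbed into $R_1,\dots,R_4$ while the centering corrections produce the deterministic error term. The decomposition is obtained by iterating Rio's one-dimensional telescoping identity \eqref{23} in each of the two coordinates, with the decisive twist that the censoring level is coupled to the \emph{sum} $s+t$ of the two dyadic scales. Fix $u,v$ with $1\le u<2^m$, $1\le v<2^n$ and set $u_s=\lfloor u/2^s\rfloor 2^s$, $v_t=\lfloor v/2^t\rfloor 2^t$, so that $u_s$ is a multiple of $2^s$, $v_t$ a multiple of $2^t$, $u_0=u$, $v_0=v$, and $u_m=v_n=0$. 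Applying \eqref{23} once in the $u$-variable and once in the $v$-variable, with the convention that raising a scale by one raises the censoring level by one, and noting that every boundary contribution vanishes because $u_m=v_n=0$, I would express $S_{m,n,u,v}$ as a double sum over $1\le s\le m$, $1\le t\le n$ of four mixed-difference terms, obtained by pairing in each coordinate the ``endpoint-rounding'' difference (as in \eqref{24}) with the ``level-increment'' difference built from the $Y^{*}_{\cdot,i,j}$.

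These four pairings are the sources of $R_1,\dots,R_4$. The pairing of the two endpoint-rounding differences gives, at scales $(s,t)$, the centered sum of $X_{s+t-2,i,j}$ over $(u_s,u_{s-1}]\times(v_t,v_{t-1}]$; since $u_{s-1}-u_s\in\{0,2^{s-1}\}$ and $v_{t-1}-v_t\in\{0,2^{t-1}\}$, this region is either empty or a \emph{full} half-block of size $2^{s-1}\times 2^{t-1}$ aligned to multiples of $(2^s,2^t)$, so it is bounded by the $(s,t)$-summand of $R_1$ with \emph{no} error. The other three pairings carry a level-increment factor in at least one coordinate, and the telescope delivers them as sums over \emph{partial} blocks (of length $u-u_s<2^s$ in the $u$-direction, or $v-v_t<2^t$ in the $v$-direction), whereas $R_2,R_3,R_4$ require full dyadic blocks.

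The heart of the argument, which replaces the unavailable Kolmogorov--Doob maximal inequality, is the completion of these partial blocks to full ones via the nonnegativity of the increments. By \eqref{max14} each $X^{*}_{n,i,j}$ is nonnegative, so a partial-block sum of $X^{*}$ is monotone in its endpoints and hence dominated by the corresponding full-block sum; recentering then controls the centered partial-block sum by its full-block analogue plus a deterministic remainder, using $\E X^{*}_{n,i,j}\le b_{2^{n}}\mathbb{P}(X_{i,j}>b_{2^{n-1}})$. For a single level-increment (the pairings giving $R_2$ and $R_3$) this produces the full block of size $2^{s-1}\times 2^{t}$, respectively $2^{s}\times 2^{t-1}$, carrying $Y^{*}_{s+t-1}$, namely the $(s,t)$-summands of $R_2$ and $R_3$. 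For the double level-increment the telescope produces the difference $Y^{*}_{s+t,i,j}-Y^{*}_{s+t-1,i,j}$ over a partial block; bounding $|a-b|\le a+b$ for the nonnegative increments $X^{*}_{s+t}$ and $X^{*}_{s+t-1}$ and then completing to a full $2^{s}\times 2^{t}$ block turns this difference into the \emph{sum} $Y^{*}_{s+t}+Y^{*}_{s+t-1}$ seen in $R_4$, up to a further deterministic remainder. This is exactly where the ``$+$'' in the integrand of $R_4$ comes from.

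Collecting the four full-block contributions into $R_1,\dots,R_4$ and summing the accumulated deterministic remainders over $1\le s\le m$, $1\le t\le n$, each of order $2^{s+t}b_{2^{s+t}}\max_{i,j}\mathbb{P}(X_{i,j}>b_{2^{s+t-2}})$ after using that $b_n$ is increasing and that the relevant level offsets are $s+t-1$ and $s+t-2$, yields the error term of \eqref{mz30} with its constant $6$. Since $R_1,\dots,R_4$ and the error are independent of $(u,v)$, taking the maximum over $1\le u<2^m$, $1\le v<2^n$ finishes the proof. I expect the main obstacle to be carrying out the partial-to-full completion \emph{simultaneously in both coordinates}: one must handle the ``mixed'' blocks that are partial in one index and full in the other, keep the dyadic alignments to multiples of $(2^s,2^t)$ coherent across the two telescopes, and track the three level offsets $s+t$, $s+t-1$, $s+t-2$ so that all deterministic remainders collapse into the single error term of \eqref{mz30} without ever invoking a maximal inequality.
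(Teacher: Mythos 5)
Your proposal is correct and follows essentially the same route as the paper: the double (iterated) Rio telescoping with truncation level tied to $s+t$, the identification of the four mixed-difference groups with $R_1,\dots,R_4$ (the pure rounding term matching $R_1$ exactly over aligned half-blocks, the others completed from partial to full dyadic blocks via nonnegativity of the increments $X^{*}$), and the recentering bound $\E X^{*}_{n,i,j}\le b_{2^{n}}\mathbb{P}(X_{i,j}>b_{2^{n-1}})$ producing the deterministic remainder with the same accounting $0+1+1+4=6$. No substantive difference from the paper's argument.
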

	Now, we return to the proof of the theorem. 
	For all real numbers $x_1,\ldots,x_n$, we have the following elementary inequality:
	\begin{equation}\label{max33}
		|x_1+\cdots+x_n|^{2q}\le n^{2q-1}(|x_1|^{2q}+\cdots+|x_n|^{2q}).
	\end{equation}
	By using \eqref{mz30} and \eqref{max33}, we obtain
	\begin{equation}\label{max34}
		\E\left(\max_{\substack{1\le s< 2^m\\ 1\le t< 2^n}} |S_{m,n,s,t}|^{2q}\right)\le C(q)\left(\sum_{i=1}^4\E R_{i}^{2q}+\left(\sum_{s=1}^{m}\sum_{t=1}^{n}2^{s+t}b_{2^{s+t}}\max_{\substack{1\le i< 2^{m}\\ 1\le j< 2^{n}}}\mathbb{P}\left(X_{i,j}>b_{2^{s+t-2}}\right)\right)^{2q}\right).
	\end{equation}
	For $m\ge1,n\ge1$, set
	\[\lambda_{m,n}=\sum_{s=1}^{m}\sum_{t=1}^{n}\lambda_{m,n,s,t}^{2q/(2q-1)}.\]
	Then
	\begin{equation}\label{max35}
		\begin{split}
			\E R_{1}^{2q}&= \E\left(\sum_{s=1}^m\sum_{t=1}^n \lambda_{m,n,s,t}\left(\lambda_{m,n,s,t}^{-1}\max_{\substack{0\le k< 2^{m-s}\\ 0\le \ell< 2^{n-t}}}\left|\sum_{i=k2^s+1}^{k2^s+2^{s-1}}\sum_{j=\ell 2^t+1}^{\ell 2^t+2^{t-1}}(X_{s+t-2,i,j}-\E X_{s+t-2,i,j})\right|\right)\right)^{2q}\\
			&\le \lambda_{m,n}^{2q-1}\sum_{s=1}^m\sum_{t=1}^n \lambda_{m,n,s,t}^{-2q}\E\left(\max_{\substack{0\le k< 2^{m-s}\\ 0\le \ell< 2^{n-t}}}\left|\sum_{i=k2^s+1}^{k2^s+2^{s-1}}\sum_{j=\ell 2^t+1}^{\ell 2^t+2^{t-1}}(X_{s+t-2,i,j}-\E X_{s+t-2,i,j})\right|^{2q}\right)\\
			&\le \lambda_{m,n}^{2q-1} \sum_{s=1}^m\sum_{t=1}^n \lambda_{m,n,s,t}^{-2q} \sum_{k=0}^{2^{m-s}-1}\sum_{\ell=0}^{2^{n-t}-1}\E\left|\sum_{i=k2^s+1}^{k2^s+2^{s-1}}\sum_{j=\ell 2^t+1}^{\ell 2^t+2^{t-1}}(X_{s+t-2,i,j}-\E X_{s+t-2,i,j})\right|^{2q}\\
			&\le C(q)\lambda_{m,n}^{2q-1} \sum_{s=1}^m\sum_{t=1}^n \lambda_{m,n,s,t}^{-2q} 2^{m+n-s-t}\left(2^{s+t}\max_{\substack{1\le i<2^{m}\\ 1\le j<2^{n}}}\E X_{s+t-2,i,j}^{2q}+2^{(s+t)q}\max_{\substack{1\le i<2^{m}\\ 1\le j<2^{n}}}\left(\E X_{s+t-2,i,j}^{2}\right)^{q}\right)\\
			&= C(q)\lambda_{m,n}^{2q-1} \sum_{s=1}^m\sum_{t=1}^n 2^{m+n} \lambda_{m,n,s,t}^{-2q} \left(\max_{\substack{1\le i<2^{m}\\ 1\le j<2^{n}}}\E X_{s+t-2,i,j}^{2q}+2^{(s+t)(q-1)}\max_{\substack{1\le i<2^{m}\\ 1\le j<2^{n}}}\left(\E X_{s+t-2,i,j}^{2}\right)^{q}\right)\\
			&\le C(q)\lambda_{m,n}^{2q-1} \sum_{s=1}^m\sum_{t=1}^n 2^{m+n} \lambda_{m,n,s,t}^{-2q} \left(\max_{\substack{1\le i<2^{m}\\ 1\le j<2^{n}}}\E X_{s+t,i,j}^{2q}+2^{(s+t)(q-1)}\max_{\substack{1\le i<2^{m}\\ 1\le j<2^{n}}}\left(\E X_{s+t,i,j}^{2}\right)^{q}\right),
		\end{split}
	\end{equation}
	where we have applied H\"{o}lder's inequality in the first inequality, and Condition $(H_{2q})$ in the third inequality.
	The last inequality follows from the fact that $X_{s+t,i,j}\ge X_{s+t-2,i,j}\ge 0$. Now, for nonnegative real numbers
	$a_1,\ldots,a_n$, we have the following elementary inequality:
	\begin{equation}\label{elementary.ineq1}
		\sum_{i=1}^n a_{i}^{r}\le \left(\sum_{i=1}^{n}a_{i}\right)^{r},\ r\ge 1.
	\end{equation}
	Applying \eqref{elementary.ineq1}, we obtain
	\begin{equation}\label{eq.rem11}
		\lambda_{m,n}^{2q-1}=\left(\sum_{s=1}^{m}\sum_{t=1}^{n}\lambda_{m,n,s,t}^{2q/(2q-1)}\right)^{2q-1}\le \left(\sum_{s=1}^{m}\sum_{t=1}^{n}\lambda_{m,n,s,t}\right)^{2q}=a_{m,n}^{2q}.
	\end{equation}
	It follows from \eqref{max35} and \eqref{eq.rem11} that
	\begin{equation}\label{max36}
		\begin{split}
			\E R_{1}^{2q}&\le C(q)a_{m,n}^{2q} \sum_{s=1}^m\sum_{t=1}^n 2^{m+n} \lambda_{m,n,s,t}^{-2q} \left(\max_{\substack{1\le i<2^{m}\\ 1\le j<2^{n}}}\E X_{s+t,i,j}^{2q}+2^{(s+t)(q-1)}\max_{\substack{1\le i<2^{m}\\ 1\le j<2^{n}}}\left(\E X_{s+t,i,j}^{2}\right)^{q}\right).
		\end{split}
	\end{equation}
	By proceeding in the same manner as \eqref{max36} with noting that $0\le X_{n,i,j}^{*}\le X_{n,i,j}$, we have
	\begin{equation}\label{max37}
		\begin{split}
			\sum_{i=2}^{4}\mathbb{E} R_{i}^{2q}\le C(q) a_{m,n}^{2q} \sum_{s=1}^m\sum_{t=1}^n 2^{m+n} \lambda_{m,n,s,t}^{-2q} \left(\max_{\substack{1\le i<2^{m}\\ 1\le j<2^{n}}}\E X_{s+t,i,j}^{2q}+2^{(s+t)(q-1)}\max_{\substack{1\le i<2^{m}\\ 1\le j<2^{n}}}\left(\E X_{s+t,i,j}^{2}\right)^{q}\right).
		\end{split}
	\end{equation}
	Combining \eqref{max34}, \eqref{max36} and \eqref{max37} yields \eqref{max32}.	
\end{proof}

\begin{proof}[Proof of Theorem \ref{thm.maximal.ineq2}]
	We use the notations in the proof of Theorem \ref{thm.maximal.ineq1}. Let $\varepsilon>0$ be arbitrary. By using \eqref{max22}, \eqref{max23} and \eqref{mz30}, we have
	\begin{equation}\label{max24}
		\begin{split}
			&\mathbb{P}\left(\max_{\substack{1\le s< 2^m\\ 1\le t< 2^n}}\left|\sum_{i=1}^{s}\sum_{j=1}^{t}(X_{i,j}-\E X_{i,j})\right|\ge 3a_{m,n}\varepsilon\right) \le \sum_{i=1}^{2^{m}}\sum_{j=1}^{2^{n}}\mathbb{P}\left(X_{i,j}>b_{2^{m+n}}\right) \\
			&\quad+\mathbb{P}\left(\max_{\substack{1\le s< 2^m\\ 1\le t< 2^n}}\left|\sum_{i=1}^{s}\sum_{j=1}^{t}(X_{m+n,i,j}-\E X_{m+n,i,j})\right|+\sum_{i=1}^{2^{m}}\sum_{j=1}^{2^{n}}\E\left(X_{i,j}\mathbf{1}\left(X_{i,j}>b_{2^{m+n}}\right)\right)\ge 3a_{m,n}\varepsilon\right)\\
			&\quad	\le \sum_{i=1}^{2^{m}}\sum_{j=1}^{2^{n}}\mathbb{P}\left(X_{i,j}>b_{2^{m+n}}\right)+
			\mathbb{P}\left(\sum_{i=1}^{4}R_{i}+6\sum_{s=1}^{m}\sum_{t=1}^{n}2^{s+t}b_{2^{s+t}}\max_{\substack{1\le i< 2^{m}\\ 1\le j< 2^{n}}}\mathbb{P}\left(X_{i,j}>b_{2^{s+t-2}}\right)\ge 2a_{m,n}\varepsilon\right)\\
			&\quad	\le \sum_{i=1}^{2^{m}}\sum_{j=1}^{2^{n}}\mathbb{P}\left(X_{i,j}>b_{2^{m+n}}\right)+\mathbb{P}\left(\sum_{i=1}^{4}R_{i}\ge a_{m,n}\varepsilon\right).
		\end{split}
	\end{equation}
	By applying Markov's inequality, \eqref{max33}, \eqref{max36} and \eqref{max37}, we have
	\begin{equation}\label{max25}
		\begin{split}
			&\mathbb{P}\left(\sum_{i=1}^{4}R_{i}\ge a_{m,n}\varepsilon\right)
			\le 
			\varepsilon^{-2q}a_{m,n}^{-2q}\E\left(\sum_{i=1}^{4}R_{i}\right)^{2q}\\
			&\qquad\qquad\qquad \le 4^{2q-1}\varepsilon^{-2q}a_{m,n}^{-2q}\sum_{i=1}^{4}\E R_{i}^{2q}\\
			&\qquad\qquad\qquad\le C(q)\varepsilon^{-2q}\sum_{s=1}^m\sum_{t=1}^n 2^{m+n} \lambda_{m,n,s,t}^{-2q} \left(\max_{\substack{1\le i<2^{m}\\ 1\le j<2^{n}}}\E X_{s+t,i,j}^{2q}+2^{(s+t)(q-1)}\max_{\substack{1\le i<2^{m}\\ 1\le j<2^{n}}}\left(\E X_{s+t,i,j}^{2}\right)^{q}\right).
		\end{split}
	\end{equation}
	Combining \eqref{max24} and \eqref{max25} yields \eqref{max21}.
\end{proof}

In the following corollary, we do not assume the underlying random variables are nonnegative.
The proof is done by using the identity $X=X^{+}-X^{-}$ for every random variable $X$
and then applying Theorems \ref{thm.maximal.ineq1} and \ref{thm.maximal.ineq2}.
We omit the details.

\begin{corollary}\label{cor.maximal.ineq5}
	Let $\{X_{i,j}, \ i\ge 1,\ j\geq 1\}$ be a double array of integrable random variables satisfying Condition $(H_{2q})$
	for some $q\ge 1$, let $\{b_n,n\ge1\}$ be an increasing sequence of positive constants and let
	$\{\lambda_{m,n,i,j},1\le i\le m,1\le j\le n,m\ge1, n\ge1\}$ be an array of positive constants.
	For $s\ge 0,\ m\ge 1, n\ge 1$, set
	\begin{equation*}
		a_{m,n}=\sum_{s=1}^{m}\sum_{t=1}^{n}\lambda_{m,n,s,t}\ \text{ and }\ 
		X_{s,m,n}=-b_{2^{s}}\mathbf{1}\left(X_{m,n}<-b_{2^{s}}\right)+X_{m,n}\mathbf{1}\left(|X_{m,n}|\le b_{2^{s}}\right)+b_{2^{s}}\mathbf{1}\left(X_{m,n}> b_{2^{s}}\right).	
	\end{equation*}
	Then for all $m\ge 1,n\ge1$ and for all $\varepsilon>0$, the following two inequalities hold:
	\begin{description}
		\item[(i)]	\begin{equation*}
			\begin{split}
				&\E\left(\max_{\substack{1\le u< 2^m\\ 1\le v< 2^n}}\left|\sum_{i=1}^{u}\sum_{j=1}^{v}(X_{m+n,i,j}-\E X_{m+n,i,j})\right|^{2q}\right) \le C(q)\left(\sum_{s=1}^{m}\sum_{t=1}^{n}2^{s+t}b_{2^{s+t}}\max_{\substack{1\le i< 2^{m}\\ 1\le j< 2^{n}}}\mathbb{P}\left(|X_{i,j}|>b_{2^{s+t-2}}\right)\right)^{2q}\\
				&\qquad\qquad+C(q) a_{m,n}^{2q} \sum_{s=1}^m\sum_{t=1}^n 2^{m+n} \lambda_{m,n,s,t}^{-2q} \left(\max_{\substack{1\le i<2^{m}\\ 1\le j<2^{n}}}\E|X_{s+t,i,j}|^{2q}+2^{(s+t)(q-1)}\max_{\substack{1\le i<2^{m}\\ 1\le j<2^{n}}}\left(\E X_{s+t,i,j}^{2}\right)^{q}\right).
			\end{split}
		\end{equation*}
		\item[(ii)]	\begin{equation*}
			\begin{split}
				&\mathbb{P}\left(\max_{\substack{1\le s< 2^m\\ 1\le t< 2^n}}\left|\sum_{i=1}^{s}\sum_{j=1}^{t}(X_{i,j}-\E X_{i,j})\right|\ge 6a_{m,n}\varepsilon\right)	\le \sum_{i=1}^{2^{m}}\sum_{j=1}^{2^{n}}\mathbb{P}\left(|X_{i,j}|>b_{2^{m+n}}\right)\\
				&\qquad\qquad+C(q)\varepsilon^{-2q}\sum_{s=1}^m\sum_{t=1}^n 2^{m+n} \lambda_{m,n,s,t}^{-2q} \left(\max_{\substack{1\le i<2^{m}\\ 1\le j<2^{n}}}\E |X_{s+t,i,j}|^{2q}+2^{(s+t)(q-1)}\max_{\substack{1\le i<2^{m}\\ 1\le j<2^{n}}}\left(\E X_{s+t,i,j}^{2}\right)^{q}\right),
			\end{split}
		\end{equation*}
		provided
		\begin{equation*}
			\sum_{i=1}^{2^{m}}\sum_{j=1}^{2^{n}}\E\left(|X_{i,j}|\mathbf{1}\left(|X_{i,j}|>b_{2^{m+n}}\right)\right)\le \varepsilon a_{m,n}
		\end{equation*}
		and
		\begin{equation*}
		6\sum_{s=1}^{m}\sum_{t=1}^{n}2^{s+t}b_{2^{s+t}}\max_{\substack{1\le i< 2^{m}\\ 1\le j< 2^{n}}}\mathbb{P}\left(|X_{i,j}|>b_{2^{s+t-2}}\right)\le \varepsilon a_{m,n}.
		\end{equation*}
	\end{description}
\end{corollary}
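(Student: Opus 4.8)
The plan is to reduce to the nonnegative case already settled in Theorems \ref{thm.maximal.ineq1} and \ref{thm.maximal.ineq2} by splitting each $X_{i,j}$ into its positive and negative parts. Write $X_{i,j}=X_{i,j}^{+}-X_{i,j}^{-}$, where $X_{i,j}^{+}=\max\{X_{i,j},0\}$ and $X_{i,j}^{-}=\max\{-X_{i,j},0\}$; both arrays are nonnegative and, since each $X_{i,j}$ is integrable, both are integrable as well. The first thing I would record is that each of the arrays $\{X_{i,j}^{+}\}$ and $\{X_{i,j}^{-}\}$ satisfies Condition $(H_{2q})$. For $\{X_{i,j}^{+}\}$ this is immediate: if $f_{\lambda}$ is increasing, then $x\mapsto f_{\lambda}(\max\{x,0\})$ is also increasing, so an increasing function of $X_{i,j}^{+}$ is an increasing function of $X_{i,j}$, and \eqref{eq.condH2q} is inherited. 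For $\{X_{i,j}^{-}\}$ the key observation is that an increasing function of $X_{i,j}^{-}$ is a \emph{decreasing} function of $X_{i,j}$; however, Condition $(H_{2q})$ is insensitive to this, because replacing each $f_{\lambda}$ by $-f_{\lambda}$ leaves both sides of \eqref{eq.condH2q} unchanged (the left side because of the absolute value, the right side because $|-f_{\lambda}|=|f_{\lambda}|$ and $(-f_{\lambda})^{2}=f_{\lambda}^{2}$). Hence $(H_{2q})$ for increasing functions automatically yields it for decreasing functions, and $\{X_{i,j}^{-}\}$ satisfies Condition $(H_{2q})$ as well. This small point is the only place where one must be careful; everything else is bookkeeping.

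Next I would check that the symmetric truncation in the corollary factors through the two one-sided truncations of Theorems \ref{thm.maximal.ineq1}--\ref{thm.maximal.ineq2}. Writing $(X^{+})_{s,i,j}$ and $(X^{-})_{s,i,j}$ for the truncations of $X_{i,j}^{+}$ and $X_{i,j}^{-}$ at level $b_{2^{s}}$ as defined there, a direct case check on the four regions $\{X_{i,j}>b_{2^{s}}\}$, $\{0\le X_{i,j}\le b_{2^{s}}\}$, $\{-b_{2^{s}}\le X_{i,j}<0\}$, $\{X_{i,j}<-b_{2^{s}}\}$ gives $X_{s,i,j}=(X^{+})_{s,i,j}-(X^{-})_{s,i,j}$ and $|X_{s,i,j}|=\min\{|X_{i,j}|,b_{2^{s}}\}$. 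Together with the pointwise bounds $(X^{+})_{s,i,j}\vee(X^{-})_{s,i,j}\le|X_{s,i,j}|$ this yields $\E((X^{\pm})_{s,i,j})^{2q}\le\E|X_{s,i,j}|^{2q}$ and $(\E((X^{\pm})_{s,i,j})^{2})^{q}\le(\E X_{s,i,j}^{2})^{q}$, so the moment terms generated by the $X^{\pm}$ arrays are dominated by the corresponding terms in the statement of the corollary. I would also record the tail comparisons $\mathbb{P}(X_{i,j}^{+}>b)\le\mathbb{P}(|X_{i,j}|>b)$ and $\mathbb{P}(X_{i,j}^{-}>b)\le\mathbb{P}(|X_{i,j}|>b)$, together with the exact identity $\mathbb{P}(X_{i,j}^{+}>b)+\mathbb{P}(X_{i,j}^{-}>b)=\mathbb{P}(|X_{i,j}|>b)$ for $b>0$, which is needed to match the constant $1$ in front of the first term of the part (ii) bound when the two contributions are added.

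For part (i) I would bound the maximum of the double sum of $X_{m+n,i,j}-\E X_{m+n,i,j}$ by the sum of the two maxima coming from the $X^{+}$ and $X^{-}$ truncations (max of a sum is at most the sum of maxima), raise to the power $2q$ using \eqref{max33} with two summands, take expectations, and apply the inequality \eqref{max32} separately to $\{X_{i,j}^{+}\}$ and $\{X_{i,j}^{-}\}$. The tail and moment comparisons from the previous paragraph turn the two resulting right-hand sides into the single right-hand side of part (i), at the cost of enlarging $C(q)$. For part (ii) I would instead use the triangle inequality and a union bound: the event that the maximum of the double sum of $X_{i,j}-\E X_{i,j}$ exceeds $6a_{m,n}\varepsilon$ is contained in the union of the two events that the $X^{+}$ and $X^{-}$ maxima each exceed $3a_{m,n}\varepsilon$. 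The corollary's two hypotheses imply the hypotheses \eqref{max22} and \eqref{max23} for both $\{X_{i,j}^{+}\}$ and $\{X_{i,j}^{-}\}$ (again via the one-sided domination by $|X_{i,j}|$), so \eqref{max21} applies to each; adding the two bounds and using the exact tail identity to combine the first terms produces precisely the right-hand side of part (ii). The only genuine subtlety in the whole argument is the decreasing-function remark for $X^{-}$ noted above; the remaining steps are the routine domination estimates that the authors suppress.
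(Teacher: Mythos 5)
Your proposal is correct and follows exactly the route the paper itself takes, namely the decomposition $X_{i,j}=X_{i,j}^{+}-X_{i,j}^{-}$ followed by an application of Theorems \ref{thm.maximal.ineq1} and \ref{thm.maximal.ineq2} to each nonnegative array; the paper omits all details, and you have supplied them correctly. In particular, your verification that $\{X_{i,j}^{-}\}$ inherits Condition $(H_{2q})$ via the negation argument for decreasing functions, the case check giving $X_{s,i,j}=(X^{+})_{s,i,j}-(X^{-})_{s,i,j}$ with $|X_{s,i,j}|=\min\{|X_{i,j}|,b_{2^{s}}\}$, and the exact tail identity $\mathbb{P}(X_{i,j}^{+}>b)+\mathbb{P}(X_{i,j}^{-}>b)=\mathbb{P}(|X_{i,j}|>b)$ used to keep the constant $1$ on the first term of part (ii) are precisely the points one must check, and they are all handled correctly.
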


\section{The proof of the Hsu--Robbins--Erd\"{o}s--Spitzer--Baum--Katz theorem for dependent random fields}\label{sec.bk}

In this section, we will prove Theorem \ref{thm.BKHRE}.
The proof is based on a Rosenthal-type maximal inequality in Theorem \ref{thm.maximal.ineq2}.

\begin{proof}[Proof of Theorem \ref{thm.BKHRE}]
	Firstly, we prove the sufficiency part. Since the arrays $\{X_{m,n}^{+}, \ m\ge 1,\ n \geq 1\}$ and $\{X_{m,n}^{-}, \ m\ge 1,\ n \geq 1\}$ also satisfy the
	assumptions of the theorem, we can assume, without loss of generality, that $X_{m,n}\ge 0$ for all $m\ge1,\ n\ge 1$.
	For $s\ge 0,\ m\ge 1, n\ge 1$, set
	\begin{equation*}
		b_{n}=n^\alpha \ \text{ and } \	X_{s,m,n}=X_{m,n}\mathbf{1}\left(X_{m,n}\le b_{2^{s}}\right)+b_{2^{s}}\mathbf{1}\left(X_{m,n}> b_{2^{s}}\right).
	\end{equation*}
	For $m\ge1,n\ge1,1\le s\le m,1\le t\le n$, let
	\[\frac{\alpha p}{2q}<a<\alpha,\ \lambda_{m,n,s,t}=2^{a(m+n)+(\alpha-a)(s+t)},\]
	and
	\[a_{m,n}=\sum_{s=1}^{m}\sum_{t=1}^{n}\lambda_{m,n,s,t}.\]
	Then 
	\begin{equation}\label{lambda.mn.1}
		\begin{split}
			b_{2^{m+n}}&=\lambda_{m,n,m,n}\\
			&\le a_{m,n}=2^{a(m+n)}\sum_{s=1}^m\sum_{t=1}^n2^{(\alpha-a)(s+t)}\\
			&\le C_1(a,\alpha) b_{2^{m+n}},\ m\ge1,n\ge1.
		\end{split}
	\end{equation}
	Let $\varepsilon>0$ be arbitrary. The proof of \eqref{mz05} will be completed if we can show that
	\begin{equation}\label{mz11}
		\sum_{m=1}^{\infty}\sum_{n=1}^{\infty}
		2^{(m+n)(\alpha p-1)}\mathbb{P}\left(\max_{\substack{1\le u< 2^m\\ 1\le v< 2^n}}\left|
		\sum_{i=1}^{u}\sum_{j=1}^{v}(X_{i,j}-\E X_{i,j})\right|>3C_1(a,\alpha)\varepsilon b_{2^{m+n}}\right)<\infty.
	\end{equation}
By using \eqref{mz04} and the Lebesgue dominated convergence theorem, we have
	\begin{equation}\label{mz12}
		\begin{split}
\lim_{x\to\infty}\E\left(|X|^{p}\mathbf{1}\left(|X|>x\right)\right)=0.
		\end{split}
	\end{equation}
Since $\alpha p\ge 1$,	it follows from the stochastic domination assumption, the first inequality in \eqref{lambda.mn.1}, and \eqref{mz12} that
	\begin{equation}\label{mz13}
		\begin{split}
			\lim_{m\vee n\to\infty}\frac{\sum_{i=1}^{2^{m}}\sum_{j=1}^{2^{n}}\E\left(X_{i,j}\mathbf{1}\left(X_{i,j}>b_{2^{m+n}}\right)\right)}{a_{m,n}}&\le \lim_{m\vee n\to\infty}\frac{2^{m+n}\E\left(|X|\mathbf{1}\left(|X|>2^{(m+n)\alpha}\right)\right)}{2^{(m+n)\alpha}}\\
			&\le \lim_{m\vee n\to\infty}\frac{\E\left(|X|^{p}\mathbf{1}\left(|X|>2^{(m+n)\alpha}\right)\right)}{2^{(m+n)(\alpha p-1)}}=0.
		\end{split}
	\end{equation}
	It is clear that \eqref{mz04} implies $\lim_{n\to\infty}n\mathbb{P}(|X|>n^{\alpha})\le \lim_{n\to\infty}n\mathbb{P}(|X|>n^{1/p})=0$. It thus follows from the stochastic domination
	and a double sum analogue of the Toeplitz lemma (see Lemma 2.2 in \cite{stadtmuller2011strong}) that
	\begin{equation}\label{mz14}
		\begin{split}
			&\lim_{m\vee n\to\infty}\frac{\sum_{s=1}^{m}\sum_{t=1}^{n}2^{s+t}b_{2^{s+t}}\max_{1\le i< 2^{m}, 1\le j< 2^{n}}\mathbb{P}\left(X_{i,j}>b_{2^{s+t-2}}\right)}{a_{m,n}}\\
			&\le \lim_{m\vee n\to\infty}\frac{4\sum_{s=1}^{m}\sum_{t=1}^{n}2^{(s+t)\alpha}2^{s+t-2}\mathbb{P}\left(|X|>2^{(s+t-2)\alpha}\right)}{2^{(m+n)\alpha}}=0.
		\end{split}
	\end{equation}
	It follows from \eqref{mz13} and \eqref{mz14} that there exists $n_0$ such that \eqref{max22} and \eqref{max23} holds for all $m\vee n\ge n_0$. 
	We will now consider the following two cases.

	\noindent\textit{Case 1: $1\le p<2$.} In this case, the array $\{X_{m,n},m\ge1,n\ge1\}$ satisfies Condition $(H_{2q})$ with $q=1$.
	Applying \eqref{lambda.mn.1}, Theorem \ref{thm.maximal.ineq2} with $q=1$,
and the stochastic domination assumption, we have
	\begin{equation}\label{mz15}
		\begin{split}
			& \sum_{m\vee n\ge n_0}2^{(m+n)(\alpha p-1)}\mathbb{P}\left(\max_{\substack{1\le u< 2^m\\ 1\le v< 2^n}}\left|
			\sum_{i=1}^{u}\sum_{j=1}^{v}(X_{i,j}-\E X_{i,j})\right|>3C_1(a,\alpha)\varepsilon b_{2^{m+n}}\right)\\
			&\le \sum_{m\vee n\ge n_0}2^{(m+n)(\alpha p-1)}\mathbb{P}\left(\max_{\substack{1\le u< 2^m\\ 1\le v< 2^n}}\left|\sum_{i=1}^{u}\sum_{j=1}^{v}(X_{i,j}-\E X_{i,j})\right|\ge 3a_{m,n}\varepsilon\right)\\
			&\le \sum_{m\vee n\ge n_0}2^{(m+n)(\alpha p-1)}\sum_{i=1}^{2^{m}}\sum_{j=1}^{2^{n}}\mathbb{P}\left(X_{i,j}>b_{2^{m+n}}\right)\\
			&\quad+C\varepsilon^{-2}\sum_{m\vee n\ge n_0}2^{(m+n)(\alpha p-1)}\sum_{s=1}^m\sum_{t=1}^n 2^{m+n} \lambda_{m,n,s,t}^{-2} \max_{\substack{1\le i<2^{m}\\ 1\le j<2^{n}}}\E X_{s+t,i,j}^{2}\\
			&\le \sum_{m\vee n\ge n_0}2^{(m+n)\alpha p}
			\mathbb{P}\left(|X|>b_{2^{m+n}}\right)\\
			&\quad+C\varepsilon^{-2}\sum_{m\vee n\ge n_0}2^{(m+n)\alpha p}\sum_{s=1}^m\sum_{t=1}^n \lambda_{m,n,s,t}^{-2} \left(\E\left(X^{2}\mathbf{1}(|X|\le b_{2^{s+t}})\right)+b_{2^{s+t}}^{2}\mathbb{P}(|X|>b_{2^{s+t}})\right).
		\end{split}
	\end{equation}
	By using \eqref{mz04} and Lemma \ref{lem.moment.estimate03}, we have
	\begin{equation}\label{mz17}
		\begin{split}
			\sum_{m\vee n\ge n_0}2^{(m+n)\alpha p}
			\mathbb{P}\left(|X|>b_{2^{m+n}}\right)&=\sum_{m\vee n\ge n_0}2^{(m+n)\alpha p}
			\mathbb{P}\left(|X|>2^{(m+n)\alpha}\right)<\infty.
		\end{split}
	\end{equation}
	From \eqref{mz15} and \eqref{mz17}, the proof of \eqref{mz11} will be completed if we can show that
	\begin{equation}\label{mz72}
		\begin{split}
			I_1:=\sum_{m\vee n\ge n_0}2^{(m+n)\alpha p}\sum_{s=1}^m\sum_{t=1}^n \lambda_{m,n,s,t}^{-2}\E\left(X^{2}\mathbf{1}(|X|\le b_{2^{s+t}})\right)<\infty,
		\end{split}
	\end{equation}
	and
	\begin{equation}\label{mz73}
		\begin{split}
			I_2:=\sum_{m\vee n\ge n_0}2^{(m+n)\alpha p}\sum_{s=1}^m\sum_{t=1}^n \lambda_{m,n,s,t}^{-2}b_{2^{s+t}}^{2}\mathbb{P}(|X|>b_{2^{s+t}})<\infty.
		\end{split}
	\end{equation}
	Note that in the case where $1\le p<2$, we have $q=1$ and thus $\alpha p<2a$. Therefore, by using \eqref{mz04} and Lemma \ref{lem.moment.estimate03} again, we have
	\begin{equation*}
		\begin{split}
			I_1&\le \sum_{m=1}^{\infty}\sum_{n=1}^{\infty}2^{(m+n)(\alpha p-2a)} \sum_{s=1}^m\sum_{t=1}^n 2^{-2(\alpha-a)(s+t)} \E\left(X^2\mathbf{1}\left(|X|\le b_{2^{s+t}}\right)\right)\\
			&=\sum_{s=1}^{\infty}\sum_{t=1}^{\infty}\left(\sum_{m=s}^{\infty}\sum_{n=t}^{\infty}2^{(m+n)(\alpha p-2a)} \right)2^{-2(\alpha-a)(s+t)} \E\left(X^2\mathbf{1}\left(|X|\le b_{2^{s+t}}\right)\right)\\
			&=C(a,\alpha,p)\sum_{s=1}^{\infty}\sum_{t=1}^{\infty} 2^{(s+t)(\alpha p-2a)} 2^{-2(\alpha-a)(s+t)} \E\left(X^2\mathbf{1}\left(|X|\le b_{2^{s+t}}\right)\right)\\
			&=C(a,\alpha,p)\sum_{s=1}^{\infty}\sum_{t=1}^{\infty} 2^{(s+t)\alpha (p-2)} \E\left(X^2\mathbf{1}\left(|X|\le 2^{(s+t)\alpha}\right)\right)<\infty
		\end{split}
	\end{equation*}
	and
	\begin{equation*}
		\begin{split}
			I_2&\le \sum_{m=1}^{\infty}\sum_{n=1}^{\infty}2^{(m+n)(\alpha p-2a)} \sum_{s=1}^m\sum_{t=1}^n 2^{2a(s+t)} \mathbb{P}\left(|X|> b_{2^{s+t}}\right)\\
			&=\sum_{s=1}^{\infty}\sum_{t=1}^{\infty}\left(\sum_{m=s}^{\infty}\sum_{n=t}^{\infty}2^{(m+n)(\alpha p-2a)} \right)2^{2a(s+t)} \mathbb{P}\left(|X|> b_{2^{s+t}}\right)\\
			&= C(a,\alpha,p)\sum_{s=1}^{\infty}\sum_{t=1}^{\infty} 2^{(s+t)(\alpha p-2a)} 2^{2a(s+t)} \mathbb{P}\left(|X|> b_{2^{s+t}}\right)\\
			&= C(a,\alpha,p)\sum_{s=1}^{\infty}\sum_{t=1}^{\infty} 2^{(s+t)\alpha p} \mathbb{P}\left(|X|> 2^{(s+t)\alpha}\right)<\infty
		\end{split}
	\end{equation*}
	thereby proving \eqref{mz72} and \eqref{mz73}.

	\noindent\textit{Case 2: $p\ge 2$.} In this case, we have from \eqref{mz04} that $\E X^2<\infty$. 
	By applying \eqref{lambda.mn.1}, Theorem \ref{thm.maximal.ineq2}
	and the stochastic domination assumption, we have
	\begin{equation}\label{hr15}
		\begin{split}
			& \sum_{m\vee n\ge n_0}2^{(m+n)(\alpha p-1)}\mathbb{P}\left(\max_{\substack{1\le u< 2^m\\ 1\le v< 2^n}}\left|
			\sum_{i=1}^{u}\sum_{j=1}^{v}(X_{i,j}-\E X_{i,j})\right|>3C_1(a,\alpha)\varepsilon b_{2^{m+n}}\right)\\
			&\le \sum_{m\vee n\ge n_0}2^{(m+n)(\alpha p-1)}\mathbb{P}\left(\max_{\substack{1\le u< 2^m\\ 1\le v< 2^n}}\left|\sum_{i=1}^{u}\sum_{j=1}^{v}(X_{i,j}-\E X_{i,j})\right|\ge 3a_{m,n}\varepsilon\right)\\
			&\le \sum_{m\vee n\ge n_0}2^{(m+n)(\alpha p-1)}\sum_{i=1}^{2^{m}}\sum_{j=1}^{2^{n}}\mathbb{P}\left(X_{i,j}>b_{2^{m+n}}\right)\\
			&\quad+C(q)\varepsilon^{-2q}\sum_{m\vee n\ge n_0}2^{(m+n)\alpha p}\sum_{s=1}^m\sum_{t=1}^n \lambda_{m,n,s,t}^{-2q} \left(\max_{\substack{1\le i<2^{m}\\ 1\le j<2^{n}}}\E X_{s+t,i,j}^{2q}+2^{(s+t)(q-1)}\max_{\substack{1\le i<2^{m}\\ 1\le j<2^{n}}}\left(\E X_{s+t,i,j}^{2}\right)^{q}\right)\\
			&\le \sum_{m\vee n\ge n_0}2^{(m+n)\alpha p}
			\mathbb{P}\left(|X|>b_{2^{m+n}}\right)\\
			&+C\sum_{m\vee n\ge n_0}2^{(m+n)\alpha p}\sum_{s=1}^m\sum_{t=1}^n \lambda_{m,n,s,t}^{-2q} \left(\E|X|^{2q}\mathbf{1}(|X|\le b_{2^{s+t}})+b_{2^{s+t}}^{2q}\mathbb{P}(|X|>b_{2^{s+t}})+2^{(s+t)(q-1)}(\E X^2)^{q}\right).
		\end{split}
	\end{equation}
	By using \eqref{mz04} and Lemma \ref{lem.moment.estimate03} again, we have \eqref{mz17} still holds in this case.
	From \eqref{mz17}, \eqref{hr15} and the fact that $\E X^2<\infty$, the proof of \eqref{mz11} will be completed if we can show that
	\begin{equation}\label{hr72}
		\begin{split}
			J_1:=\sum_{m=1}^{\infty}\sum_{n=1}^{\infty}2^{(m+n)\alpha p}\sum_{s=1}^m\sum_{t=1}^n \lambda_{m,n,s,t}^{-2q}\E\left(|X|^{2q}\mathbf{1}\left(|X|\le b_{2^{s+t}}\right)\right)<\infty,
		\end{split}
	\end{equation}
	\begin{equation}\label{hr73}
		\begin{split}
			J_2:=\sum_{m=1}^{\infty}\sum_{n=1}^{\infty}2^{(m+n)\alpha p}\sum_{s=1}^m\sum_{t=1}^n \lambda_{m,n,s,t}^{-2q}b_{2^{s+t}}^{2q}\mathbb{P}\left(|X|> b_{2^{s+t}}\right)<\infty,
		\end{split}
	\end{equation}
	and
	\begin{equation}\label{hr74}
		\begin{split}
			J_3:=\sum_{m=1}^{\infty}\sum_{n=1}^{\infty}2^{(m+n)\alpha p}\sum_{s=1}^m\sum_{t=1}^n \lambda_{m,n,s,t}^{-2q}2^{(s+t)(q-1)}<\infty.
		\end{split}
	\end{equation}
	Since $q>(\alpha p-1)/(2\alpha-1)$, we have 
	\[\frac{\alpha p}{2q}<\alpha-\frac{q-1}{2q}<\alpha.\]
	Therefore, we can let $a$ be such that
	\begin{equation}\label{def.ab}
		\alpha-\frac{q-1}{2q}<a<\alpha.
	\end{equation}
	The proofs of \eqref{hr72} and \eqref{hr73} are the same as that of \eqref{mz72} and \eqref{mz73}, respectively. Finally, by using \eqref{def.ab}
	and noting again that $(2\alpha-1)q>\alpha p-1$, we have
	\begin{equation*}\label{hr76}
		\begin{split}
			J_3&=\sum_{m=1}^{\infty}\sum_{n=1}^{\infty}2^{(m+n)(\alpha p-2qa)}\sum_{s=1}^m\sum_{t=1}^n 2^{(s+t)(q-1-2q(\alpha -a))}\\
			&\le C\sum_{m=1}^{\infty}\sum_{n=1}^{\infty}2^{(m+n)(\alpha p+q-1-2\alpha q)}<\infty
		\end{split}
	\end{equation*}
	thereby proving \eqref{hr74}. The proof of the sufficiency part is completed.
	
	Now, we will prove the necessity part. Assume that \eqref{mz05.converse} holds. 
	Without loss of generality, we can assume that $\mu=0$. It is clear that this implies that for all $\varepsilon>0$,
	\begin{equation}\label{hr78}
		\sum_{m=1}^{\infty}\sum_{n=1}^{\infty}
		(mn)^{\alpha p-2}\mathbb{P}\left(\max_{1\le k\le m, 1\le \ell\le n}\left|X_{k,\ell}\right|>\varepsilon (mn)^{\alpha}\right)<\infty
	\end{equation}
	and that
	\begin{equation}\label{hr80}
		\lim_{m\vee n\to\infty}\mathbb{P}\left(\max_{1\le k\le m, 1\le \ell\le n}\left|X_{k,\ell}\right|>\varepsilon (mn)^{\alpha}\right)=0.
	\end{equation}
	Since $\{X_{m,n}, \ m\ge 1,\ n \geq 1\}$ satisfies Condition $(H_{2})$, we obtain from \eqref{hr80} and Lemma \ref{lem.BC}
	that
	\begin{equation}\label{mz13.converse}
		mn\mathbb{P}\left(|X_{1,1}|>(mn)^{\alpha}\right)\le C\mathbb{P}\left(\max_{1\le k\le m, 1\le \ell\le n} |X_{k,\ell}|>(mn)^{\alpha}\right)
	\end{equation}
	whenever $m\vee n\ge n_1$ for some positive integer $n_1$. Combining \eqref{hr78} and \eqref{mz13.converse}, we have
	\begin{equation}\label{mz14.converse}
		\begin{split}
			\sum_{m\vee n\ge n_1}(mn)^{\alpha p-1}\mathbb{P}\left(|X|>(mn)^{\alpha}\right)&=\sum_{m\vee n\ge n_1}(mn)^{\alpha p-1}\mathbb{P}\left(|X_{1,1}|>(mn)^{\alpha}\right)\\
			&\le C \sum_{m\vee n\ge n_1}(mn)^{\alpha p-2}\mathbb{P}\left(\max_{1\le k\le m, 1\le \ell\le n} |X_{k,\ell}|>(mn)^{\alpha}\right)\\
			&<\infty.
		\end{split}
	\end{equation}
	Applying Lemma \ref{lem.moment.estimate03}, we have from \eqref{mz14.converse} that $\E\left(|X|^p \log(|X|)\right)<\infty$ thereby establishing \eqref{mz04}.
	Since \eqref{mz04} holds, we can apply the sufficiency part to conclude that \eqref{mz05} holds. By using Remark \ref{rem:slln1}, we obtain from \eqref{mz05} that
	\begin{equation}\label{mz16.converse}
		\lim_{m\vee n\to\infty}\dfrac{\sum_{i=1}^{m}\sum_{j=1}^{n}(X_{i,j}-\E X_{i,j})}{(mn)^{\alpha}}=\lim_{m\vee n\to\infty}\left(\dfrac{\sum_{i=1}^{m}\sum_{j=1}^{n}X_{i,j}}{(mn)^{\alpha}}-(mn)^{1-\alpha}\E X\right)=0 \ \text{ a.s.}
	\end{equation}
	Similarly,  \eqref{mz05.converse} (with $\mu=0$) implies
	\begin{equation}\label{mz17.converse}
		\lim_{m\vee n\to\infty}\dfrac{\sum_{i=1}^{m}\sum_{j=1}^{n}X_{i,j}}{(mn)^{\alpha}}=0 \ \text{ a.s.}
	\end{equation}
	Since $\alpha\le 1$, we obtain from \eqref{mz16.converse} and \eqref{mz17.converse} that $\E X=0$ thereby completing the proof of the necessity part.
\end{proof}

\section{The proof of the Feller WLLN and the Pyke--Root theorem for dependent random fields}\label{sec.feller-pyke-root}

In this section, we present the proof of Theorems \ref{thm.Feller} and \ref{thm.PR}.
In these theorems, the underlying random variables are only required to satisfy Condition $(H_2)$.
Therefore, they can apply for pairwise independent random variables and pairwise negatively dependent random variables.

\begin{proof}[Proof of Theorem \ref{thm.Feller}]
	We first prove the sufficiency part. Assume that \eqref{fel03} holds. As in Section \ref{sec.bk}, it suffices to consider the case $X_{m,n}\ge 0$ for all $m\ge1,n\ge1$. Set
	\[X_{s,m,n}=X_{m,n}\mathbf{1}\left(X_{m,n}\le b_{2^{s}}\right)+b_{2^{s}}\mathbf{1}\left(X_{m,n}> b_{2^{s}}\right),\ s\ge 0,m\ge 1,n\ge1.\]
	For $m\ge1,\ n\ge1$, let $k\ge 1,\ell\ge 1$ be such that $2^{k-1}\le m<2^{k}$, $2^{\ell-1}\le n<2^{\ell}$.
	Then
	\begin{equation}\label{fel07}
		\begin{split}
			\frac{\max_{u\le m,v\le n}\left|\sum_{i=1}^{u}\sum_{j=1}^{v} (X_{i,j}-\E Z_{mn,i,j})\right|}{b_{mn}}&\le \frac{4\max_{u< 2^{k},v<2^{\ell}}\left|\sum_{i=1}^{u}\sum_{j=1}^{v} (X_{i,j}-X_{k+\ell,i,j})\right|}{b_{2^{k+\ell}}}\\
			&\qquad+\frac{4\max_{u< 2^{k},v<2^{\ell}}\left|\sum_{i=1}^{u}\sum_{j=1}^{v} \E(X_{k+\ell,i,j}-Z_{mn,i,j})\right|}{b_{2^{k+\ell}}}\\
			&\qquad+\frac{4\max_{u< 2^{k},v<2^{\ell}}\left|\sum_{i=1}^{u}\sum_{j=1}^{v} (X_{k+\ell,i,j}-\E X_{k+\ell,i,j})\right|}{b_{2^{k+\ell}}}\\
			&:=4\left(K_{1}(k,\ell)+K_{2}(k,\ell,m,n)+K_{3}(k,\ell)\right).
		\end{split}
	\end{equation}
	The rest of the proof of the sufficiency part will be divided into three steps. 
	
	\noindent\textbf{Step 1:} Prove 
	\begin{equation}\label{fel08}
		\lim_{k\vee \ell\to\infty}K_{1}(k,\ell)=0\ \text{ in probability}.
	\end{equation}
	Let $\varepsilon>0$ be arbitrary. By \eqref{fel03}, we have
	\begin{equation*}
		\begin{split}
			\mathbb{P}\left(K_{1}(k,\ell)>\varepsilon\right)& \le \mathbb{P}\left(\bigcup_{i=1}^{2^{k}}\bigcup_{i=1}^{2^{\ell}}(X_{i,j}\not=X_{k+\ell,i,j})\right)\\
			&\le \sum_{i=1}^{2^{k}}\sum_{j=1}^{2^{\ell}} \mathbb{P}\left(X_{i,j}>b_{2^{k+\ell}}\right)\\
			&\le 2^{k+\ell}\mathbb{P}\left(|X|>b_{2^{k+\ell}}\right)\to 0 \ \text{ as }\ k\vee\ell\to\infty
		\end{split}
	\end{equation*}
	thereby establishing \eqref{fel08}.
	
	\noindent\textbf{Step 2:} Prove 
	\begin{equation}\label{fel09}
		\lim_{k\vee \ell\to\infty} \max_{2^{k-1}\le m<2^k,2^{\ell-1}\le n<2^\ell}K_{2}(k,\ell,m,n)=0.
	\end{equation}
	For all $2^{k-1}\le m<2^{k}$, $2^{\ell-1}\le n<2^{\ell}$, it is clear that
	\begin{equation*}
		\begin{split}
			0&\le X_{k+\ell,i,j}-Z_{mn,i,j}\\
			&\le X_{i,j}\mathbf{1}(b_{2^{k+\ell-2}}<X_{i,j}\le b_{2^{k+\ell}})+b_{2^{k+\ell}}\mathbf{1}(X_{i,j}>b_{2^{k+\ell}})\\
			&\le b_{2^{k+\ell}}\mathbf{1}(X_{i,j}>b_{2^{k+\ell-2}}).
		\end{split}
	\end{equation*}
	It thus follows from the stochastic domination assumption and \eqref{fel03} that
	\begin{equation*}
		\begin{split}
			\max_{2^{k-1}\le m<2^k,2^{\ell-1}\le n<2^\ell}K_{2}(k,\ell,m,n)&\le \frac{\sum_{i=1}^{2^{k}}\sum_{j=1}^{2^{\ell}}b_{2^{k+\ell}}\mathbb{P}(X_{i,j}>b_{2^{k+\ell-2}})}{b_{2^{k+\ell}}}\\
			&\le 2^{k+\ell}\mathbb{P}(|X|>b_{2^{k+\ell-2}})\to 0 \ \text{ as }\ k\vee \ell\to\infty
		\end{split}
	\end{equation*}
	thereby establishing \eqref{fel09}.
	
	\noindent\textbf{Step 3:} Prove 
	\begin{equation}\label{fel11}
		\lim_{k\vee \ell\to\infty}K_{3}(k,\ell)=0\ \text{ in probability}.
	\end{equation}
	This is the most difficult part.
	For $m\ge1,n\ge1,1\le s\le m,1\le t\le n$, set
	\[1/2<a<1/p,\ \lambda_{m,n,s,t}=2^{a(m+n)+(1/p-a)(s+t)},\]
	and
	\[a_{m,n}=\sum_{s=1}^{m}\sum_{t=1}^{n}\lambda_{m,n,s,t}.\]
	Then, similar to \eqref{lambda.mn.1}, we have 
	\begin{equation}\label{lambda.mn.2}
		\begin{split}
			b_{2^{m+n}}	&\le a_{m,n}\le C_1(a,p) b_{2^{m+n}},\ m\ge1,n\ge1.
		\end{split}
	\end{equation}
	By using the second inequality in \eqref{lambda.mn.2} and Theorem \ref{thm.maximal.ineq1} with $q=1$, we have 
	\begin{equation}\label{fel12}
	\begin{split}
			0&\le b_{2^{k+\ell}}^{-2}\E\left(\max_{\substack{1\le u< 2^{k}\\ 1\le v< 2^{\ell}}}\left(\sum_{i=1}^{v}\sum_{j=1}^{v}(X_{k+\ell,i,j}-\E X_{k+\ell,i,j})\right)^{2}\right)\\
			&\le C_1(a,p)^{2} a_{k,\ell}^{-2}\E\left(\max_{\substack{1\le u< 2^{k}\\ 1\le v< 2^{\ell}}}\left(\sum_{i=1}^{u}\sum_{j=1}^{v}(X_{k+\ell,i,j}-\E X_{k+\ell,i,j})\right)^{2}\right)\\
			& \le C\left(\left(a_{k,\ell}^{-1}\sum_{s=1}^{k}\sum_{t=1}^{\ell}2^{s+t}b_{2^{s+t}}\max_{\substack{1\le i< 2^{k}\\ 1\le j< 2^{\ell}}}\mathbb{P}\left(X_{i,j}>b_{2^{s+t-2}}\right)\right)^{2}+\sum_{s=1}^{k}\sum_{t=1}^{\ell} 2^{k+\ell} \lambda_{k,\ell,s,t}^{-2}\max_{\substack{1\le i<2^{k}\\ 1\le j<2^{\ell}}}\E X_{s+t,i,j}^{2}\right).
		\end{split}
	\end{equation}
	By applying the first inequality in \eqref{lambda.mn.2} and the stochastic domination assumption, we have
	\begin{equation}\label{fel14}
		\begin{split}
			0&\le a_{k,\ell}^{-1}\sum_{s=1}^{k}\sum_{t=1}^{\ell}2^{s+t}b_{2^{s+t}}\max_{1\le i< 2^{k}, 1\le j< 2^{\ell}}\mathbb{P}\left(X_{i,j}>b_{2^{s+t-2}}\right)\\
			&\le b_{2^{k+\ell}}^{-1}\sum_{s=1}^{k}\sum_{t=1}^{\ell}b_{2^{s+t}}2^{s+t}\mathbb{P}\left(|X|>b_{2^{s+t-2}}\right).
		\end{split}
	\end{equation}
	It is clear that 
	\begin{equation}\label{fel17}
		\sup_{k\ge1,\ell\ge1}	b_{2^{k+\ell}}^{-1}\sum_{s=1}^{k}\sum_{t=1}^{\ell}b_{2^{s+t}}\le C.
	\end{equation} We also have from \eqref{fel03} that
	\begin{equation}\label{fel19}
		\lim_{s\vee t\to\infty}2^{s+t}\mathbb{P}\left(|X|>b_{2^{s+t-2}}\right)=0.
	\end{equation}
	By using \eqref{fel17} and \eqref{fel19} and a double sum analogue of the Toeplitz lemma (see Lemma 2.2 in \cite{stadtmuller2011strong}), we obtain
	\begin{equation}\label{fel21}
		\begin{split}
			\lim_{k\vee \ell\to\infty}b_{2^{k+\ell}}^{-1}\sum_{s=1}^{k}\sum_{t=1}^{\ell}b_{2^{s+t}}2^{s+t}\mathbb{P}\left(|X|>b_{2^{s+t-2}}\right)=0.
		\end{split}
	\end{equation}
	Combining \eqref{fel14} and \eqref{fel21} yields
	\begin{equation}\label{fel24}
		\begin{split}
			\lim_{k\vee \ell\to\infty}a_{k,\ell}^{-1}\sum_{s=1}^{k}\sum_{t=1}^{\ell}2^{s+t}b_{2^{s+t}}\max_{1\le i< 2^{k}, 1\le j< 2^{\ell}}\mathbb{P}\left(X_{i,j}>b_{2^{s+t-2}}\right)=0.
		\end{split}
	\end{equation}
	By applying the stochastic domination assumption again, we have
	\begin{equation}\label{fel27}
		\begin{split}
			&\sum_{s=1}^{k}\sum_{t=1}^{\ell} 2^{k+\ell} \lambda_{k,\ell,s,t}^{-2}\max_{\substack{1\le i<2^{k}\\ 1\le j<2^{\ell}}}\E X_{s+t,i,j}^{2}\le \sum_{s=1}^{k}\sum_{t=1}^{\ell} 2^{k+\ell} \lambda_{k,\ell,s,t}^{-2}\left(\E X^{2}\mathbf{1}(|X|\le b_{2^{s+t}})+b_{2^{s+t}}^{2}\mathbb{P}(|X|>b_{2^{s+t}})\right)\\
			&\qquad\qquad=\sum_{s=1}^{k}\sum_{t=1}^{\ell} 2^{(k+\ell)(1-2a)}2^{-2(1/p-a)(s+t)}\left(\E X^{2}\mathbf{1}(|X|\le b_{2^{s+t}})+b_{2^{s+t}}^{2}\mathbb{P}(|X|>b_{2^{s+t}})\right)\\
			&\qquad\qquad=2^{(k+\ell)(1-2a)}\sum_{s=1}^{k}\sum_{t=1}^{\ell} 2^{(2a-1)(s+t)}\left(\frac{2^{s+t}}{b_{2^{s+t}}^{2}}\E X^{2}\mathbf{1}(|X|\le b_{2^{s+t}})+2^{s+t}\mathbb{P}(|X|>b_{2^{s+t}})\right)\\
			&\qquad\qquad:=2^{(k+\ell)(1-2a)}\sum_{s=1}^{k}\sum_{t=1}^{\ell} 2^{(2a-1)(s+t)}\left(y_{1}(s,t)+y_{2}(s,t)\right),
		\end{split}
	\end{equation}
	where
	\begin{equation*}\label{y1y2}
		y_{1}(s,t)=\frac{2^{s+t}}{b_{2^{s+t}}^{2}}\E X^{2}\mathbf{1}(|X|\le b_{2^{s+t}})\ \text{ and } \ y_{2}(s,t)=2^{s+t}\mathbb{P}(|X|>b_{2^{s+t}}).
	\end{equation*}
	By applying the Toeplitz lemma and \eqref{fel03}, we have
	\begin{equation}\label{fel31}
		\begin{split}
			y_{1}(s,t)&=\frac{1}{2^{(s+t)(2/p-1)}}\left(\sum_{j=1}^{s+t}\E X^{2}\mathbf{1}(b_{2^{j-1}}<|X|\le b_{2^{j}})+\E X^{2}\mathbf{1}(0\le |X|\le b_{1})\right)\\
			&\le \frac{1}{2^{(s+t)(2/p-1)}}\left(\sum_{j=1}^{s+t}b_{2^{j}}^{2}\mathbb{P}(|X|>b_{2^{j-1}})+b_{1}^{2}\right)\\
			&= \frac{1}{2^{(s+t)(2/p-1)}}\left(\sum_{j=1}^{s+t}2^{j(2/p-1)}2^{j}\mathbb{P}(|X|>b_{2^{j-1}})+b_{1}^{2}\right)\\
			&\to 0 \ \text{ as }\ s\vee t\to\infty.
		\end{split}
	\end{equation}
	Applying \eqref{fel03} again, we have
	\begin{equation}\label{fel32}
		\begin{split}
			y_{2}(s,t)\to 0 \ \text{ as }\ s\vee t\to\infty.
		\end{split}
	\end{equation}
	Similar to \eqref{fel21}, we conclude from \eqref{fel31}, \eqref{fel32} and the double sum analogue of the Toeplitz lemma that
	\begin{equation}\label{fel34}
		\begin{split}
			\lim_{k\vee \ell\to\infty}2^{(k+\ell)(1-2a)}\sum_{s=1}^{k}\sum_{t=1}^{\ell} 2^{(2a-1)(s+t)}\left(y_{1}(s,t)+y_{2}(s,t)\right)=0.
		\end{split}
	\end{equation}
	Combining \eqref{fel27} and \eqref{fel34} yields
	\begin{equation}\label{fel37}
		\lim_{k\vee \ell\to\infty}\sum_{s=1}^{k}\sum_{t=1}^{\ell} 2^{k+\ell} \lambda_{k,\ell,s,t}^{-2}\max_{1\le i<2^{k}, 1\le j<2^{\ell}}\E X_{s+t,i,j}^{2}=0.
	\end{equation}
	From \eqref{fel12}, \eqref{fel24} and \eqref{fel37}, we obtain \eqref{fel11}. Combining \eqref{fel07}--\eqref{fel11} yields \eqref{fel04}.
	The proof of the sufficiency part is completed.
	
	We will now prove the necessity part. Since the random variables $X_{m,n},m\ge1,n\ge1$ are symmetric, \eqref{fel04} becomes
	\begin{equation*}
		\frac{\max_{u\le m,v\le n}\left|\sum_{i=1}^{u}\sum_{j=1}^{v}X_{i,j}\right|}{(mn)^{1/p}}\overset{\mathbb{P}}{\to} 0 \text{ as }m\vee n\to\infty.
	\end{equation*}
	This implies
	\begin{equation}\label{fel39}
		\frac{\max_{i\le m, j\le n}|X_{i,j}|}{(mn)^{1/p}}\overset{\mathbb{P}}{\to} 0 \ \text{ as }\ m\vee n\to\infty.
	\end{equation}
	Applying Lemma \ref{lem.BC}, we obtain from \eqref{fel39} that
	\[\lim_{m\vee n\to\infty}mn\mathbb{P}\left(|X|>(mn)^{1/p}\right)=\lim_{m\vee n\to\infty}mn\mathbb{P}\left(|X_{1,1}|>(mn)^{1/p}\right)=0,\]
	or, equivalently, \eqref{fel03} holds.
\end{proof}

\begin{remark}\label{K3}
	In the proof of the sufficiency of Theorem \ref{thm.Feller}, we obtain from 
	\eqref{fel12}, \eqref{fel24} and \eqref{fel37} that
	\begin{equation}\label{fel41}
		K_{3}(k,\ell)\overset{\mathcal{L}_2}{\to}0\ \text{ as }\ k\vee \ell\to \infty
	\end{equation}
	which is stronger than \eqref{fel11}.
\end{remark}

Before proving Theorem \ref{thm.PR}, we state the following result which may be of independent interest.
The result involves the concept of regularly varying functions which is presented as follows. 
A real-valued function $R(\cdot )$ is said to be \textit{regularly varying} with index of regular variation
$\rho\in\R$ if it is 
a positive and measurable function on $[0,\infty)$, and for each $\lambda>0$,
\begin{equation*}\label{rv01}
	\lim_{x\to\infty}\dfrac{R(\lambda x)}{R(x)}=\lambda^\rho.
\end{equation*}
A regularly varying function with the index of regular variation $\rho=0$ is called \textit{slowly varying}.
Let $L(\cdot)$ be a slowly varying function.
Then by Theorem 1.5.13 in Bingham et al. \cite{bingham1989regular},
there exists a slowly varying function $\tilde{L}(\cdot)$, unique up to asymptotic equivalence, satisfying
\begin{equation*}
	\lim_{x\to\infty}L(x)\tilde{L}\left(xL(x)\right)=1\ \text{ and } \lim_{x\to\infty}\tilde{L}(x)L\left(x\tilde{L}(x)\right)=1.
\end{equation*}
The function $\tilde{L}$ is called the de Bruijn conjugate of $L$ (see
p. 29 in Bingham et al. \cite{bingham1989regular}). If $L(x)=\log^\gamma x$ or $L(x)=\log^\gamma(\log x)$ for some $\gamma\in\R$, then $\tilde{L}(x)=1/L(x)$.
By Proposition B.1.9 in \cite{haan2006extreme}, we can assume, without loss of generality, that
$x^\gamma L(x)$ and $x^\gamma \tilde{L}(x)$ are both strictly increasing for all $\gamma>0$.
Thereafter, for a slowly varying function $L(\cdot)$ defined on $[0,\infty)$,
we denote the de Bruijn
conjugate of $L(\cdot)$
by $\tilde{L}(\cdot)$.

\begin{proposition}\label{prop.unif}
	Let $p>0$, let $\{X_{\lambda},\lambda\in \Lambda\}$ be a family of random variables and let $L(\cdot)$ be a slowly varying function
	and $\tilde{L}(x)$ 
	the de Bruijn conjugate of $L(x)$. Then the following statements hold.
	\begin{description}
		\item[(i)] If $\{X_{\lambda},\lambda\in \Lambda\}$ is stochastically dominated by a random variables $X$ satisfying 
		\begin{equation}\label{domination01}
\E\left(|X|^pL(|X|^p)\right)<\infty,
		\end{equation}
		then $\{|X_{\lambda}|^pL(|X_{\lambda}|^p),\lambda\in \Lambda\}$
		is uniformly integrable.
		\item[(ii)] If $\{|X_{\lambda}|^pL(|X_{\lambda}|^p),\lambda\in \Lambda\}$
		is uniformly integrable, 
		then there exists a random variable $X$ with the distribution function
		\begin{equation}\label{domination02}
			F(x)=1-\sup_{\lambda \in \Lambda}\mathbb{P}(|X_{\lambda}|>x),\ x\in\R
		\end{equation}
		such that $\{X_{\lambda},\lambda\in \Lambda\}$ is stochastically dominated by $X$, and
		\begin{equation*}\label{domination04}
			\lim_{x\to\infty}x\mathbb{P}\left(|X|>x^{1/p}\tilde{L}^{1/p}(x)\right)=0.
		\end{equation*}
		\item[(iii)] If 
		\begin{equation*}\label{domination05}
			\sup_{\lambda \in \Lambda}\E\left(|X_{\lambda}|^pL(|X_{\lambda}|^p)\log_{\nu}^{(2)}|X_{\lambda}|\right)<\infty,
		\end{equation*}
		for some positive integer $\nu$, then there exists a random variable $X$ with distribution function $F(x)$ as in \eqref{domination02} 
		such that $\{X_{\lambda},\lambda\in \Lambda\}$ is stochastically dominated by $X$, and \eqref{domination01} holds.
	\end{description}
\end{proposition}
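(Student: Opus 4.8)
The plan is to treat the three parts in turn, using throughout the monotone form of stochastic domination: if $\mathbb{P}(|X_\lambda|>x)\le \mathbb{P}(|X|>x)$ for every $x$ and $h\ge 0$ is nondecreasing, then for each $b\ge 0$,
\begin{equation*}
\sup_{\lambda}\E\left(h(|X_\lambda|)\mathbf{1}(|X_\lambda|>b)\right)\le \E\left(h(|X|)\mathbf{1}(|X|>b)\right),
\end{equation*}
which follows from the layer-cake identity $\E(h(|X_\lambda|)\mathbf{1}(|X_\lambda|>b))=h(b)\mathbb{P}(|X_\lambda|>b)+\int_b^\infty \mathbb{P}(|X_\lambda|>t)\,\dx h(t)$ and termwise comparison of tails; this is the increasing-function extension of \eqref{sto.domination1}. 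Write $\phi(t)=t^pL(t^p)$; since we may assume $x^\gamma L(x)$ is strictly increasing for every $\gamma>0$, the substitution $u=t^p$ shows $\phi$ is strictly increasing on $[0,\infty)$, so $\{\phi(|X_\lambda|)>a\}=\{|X_\lambda|>\phi^{-1}(a)\}$. For \textbf{(i)} I would apply the displayed inequality with $h=\phi$ and $b=\phi^{-1}(a)$ to obtain $\sup_\lambda \E(\phi(|X_\lambda|)\mathbf{1}(\phi(|X_\lambda|)>a))\le \E(\phi(|X|)\mathbf{1}(\phi(|X|)>a))$; as \eqref{domination01} makes $\E\phi(|X|)$ finite, the right-hand side tends to $0$ as $a\to\infty$ by dominated convergence, which is exactly uniform integrability of $\{\phi(|X_\lambda|)\}$.

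For \textbf{(ii)} I would first check that $G(x):=\sup_\lambda \mathbb{P}(|X_\lambda|>x)$ is a genuine survival function. It is nonincreasing, and it is right-continuous because, for each $\lambda$, $x\mapsto \mathbb{P}(|X_\lambda|>x)$ is right-continuous, so $\inf_{y>x}G(y)\ge \lim_{y\downarrow x}\mathbb{P}(|X_\lambda|>y)=\mathbb{P}(|X_\lambda|>x)$ for every $\lambda$, whence $\inf_{y>x}G(y)\ge G(x)$ and equality holds. Uniform integrability gives $M:=\sup_\lambda\E\phi(|X_\lambda|)<\infty$, so Markov's inequality yields $G(x)\le M/\phi(x)\to 0$; thus $F=1-G$ is a distribution function supported on $[0,\infty)$, and any $X\ge 0$ with this law satisfies $\mathbb{P}(|X|>x)=G(x)\ge \mathbb{P}(|X_\lambda|>x)$, i.e.\ $X$ stochastically dominates the family. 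For the limit, set $y=x^{1/p}\tilde L^{1/p}(x)$, so $y^p=x\tilde L(x)$ and hence $\phi(y)=y^pL(y^p)=x\,\tilde L(x)L(x\tilde L(x))$; the defining property $\tilde L(x)L(x\tilde L(x))\to 1$ of the de Bruijn conjugate gives $\phi(y)\sim x$. Since $\phi$ is increasing,
\begin{equation*}
\phi(y)\mathbb{P}(|X|>y)=\phi(y)\sup_\lambda \mathbb{P}\left(\phi(|X_\lambda|)>\phi(y)\right)\le \sup_\lambda \E\left(\phi(|X_\lambda|)\mathbf{1}(\phi(|X_\lambda|)>\phi(y))\right)\to 0
\end{equation*}
as $y\to\infty$ by uniform integrability, and combining this with $\phi(y)\sim x$ yields $x\mathbb{P}(|X|>y)\to 0$, the asserted limit.

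For \textbf{(iii)} put $\psi(t)=\phi(t)\log_\nu^{(2)}(t)$ and $M:=\sup_\lambda \E\psi(|X_\lambda|)<\infty$. As in (ii), the supremum tail $G$ defines a distribution function $F$ and a dominating $X\ge 0$; the task is to upgrade domination to the integrability \eqref{domination01}, i.e.\ $\E\phi(|X|)<\infty$. Here the difficulty is genuine and is the step I expect to require the most care: by the tail formula $\E\phi(|X|)=\int_0^\infty G(s)\,\dx\phi(s)$ the integrand involves the \emph{supremum} $G(s)=\sup_\lambda \mathbb{P}(|X_\lambda|>s)$, and the integral of a supremum is \emph{not} controlled by $\sup_\lambda \E\phi(|X_\lambda|)$; this is precisely why the additional weight $\log_\nu^{(2)}$ is imposed. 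I would bound the supremum tail itself by Markov's inequality applied with the nondecreasing $\psi$, obtaining $G(s)\le M/\psi(s)=M/(\phi(s)\log_\nu^{(2)}(s))$ for large $s$. Substituting $u=\phi(s)$ and using that $L$ is slowly varying (so $\log u\sim p\log s$, hence $\log_\nu^{(2)}(s)\asymp \log_\nu^{(2)}(u)$), the tail of $\E\phi(|X|)$ is dominated by a constant multiple of
\begin{equation*}
\int^\infty \frac{\dx u}{u\,\log_\nu^{(2)}(u)}=\int^\infty \frac{\dx u}{u\,(\log u)(\log\log u)\cdots(\log\cdots\log u)^2}.
\end{equation*}
Writing $\ell_k$ for the $k$-fold iterated logarithm, the integrand equals $\ell_\nu'(u)/\ell_\nu(u)^2=-\tfrac{\dx}{\dx u}\bigl(1/\ell_\nu(u)\bigr)$, so the integral converges; this is the Abel--Dini/Bertrand borderline, and the square on the last iterated logarithm is exactly what secures convergence. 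Hence $\E\phi(|X|)<\infty$, which is \eqref{domination01}. The main obstacle is thus this passage in (iii) from a bound on the supremum tail to a convergent Bertrand-type integral, which is what pins down the specific weight $\log_\nu^{(2)}$ in the hypothesis.
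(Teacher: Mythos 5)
Your proof is correct, but note the special situation here: the paper gives no argument of its own for Proposition~\ref{prop.unif}; its ``proof'' is a single sentence deferring to Theorem~3.1 of \cite{thanh2023new}. Your write-up therefore supplies precisely the details the paper omits, and it does so along the natural route one would expect that reference to take: the increasing-function form of stochastic domination via termwise tail comparison for part~(i); the supremum survival function $G(x)=\sup_{\lambda}\Pro(|X_{\lambda}|>x)$, Markov's inequality with the weight $\phi(t)=t^{p}L(t^{p})$, and the de Bruijn identity $\tilde{L}(x)L(x\tilde{L}(x))\to 1$ for part~(ii); and, for part~(iii), the correct identification of the crux, namely that $\E\,\phi(|X|)=\int_{0}^{\infty}G(s)\,\dx\phi(s)$ involves the \emph{supremum} of the tails, which cannot be bounded by $\sup_{\lambda}\E\,\phi(|X_{\lambda}|)$, and that the extra factor $\log_{\nu}^{(2)}$ is exactly what turns the tail bound $G(s)\le M/\bigl(\phi(s)\log_{\nu}^{(2)}(s)\bigr)$ into the convergent Bertrand-type integral $\int^{\infty}\dx u/\bigl(u\log_{\nu}^{(2)}(u)\bigr)<\infty$. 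One slip to fix in part~(ii): for the nonincreasing function $G$, the right-hand limit at $x$ is $\lim_{y\downarrow x}G(y)=\sup_{y>x}G(y)$, not $\inf_{y>x}G(y)$ (the latter equals $\lim_{y\to\infty}G(y)$, so your displayed inequality with the infimum is false as written). The intended argument is fine once the quantifier is corrected: for each $\lambda$, $\sup_{y>x}G(y)\ge\lim_{y\downarrow x}\Pro(|X_{\lambda}|>y)=\Pro(|X_{\lambda}|>x)$, hence $\sup_{y>x}G(y)\ge G(x)$, while monotonicity gives the reverse inequality, so $G$ is right-continuous and $F=1-G$ is a genuine distribution function. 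With that cosmetic repair, your argument is complete and self-contained, which is more than the paper itself provides.
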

\begin{proof}
	The proof of Proposition \ref{prop.unif} is similar to that of Theorem 3.1 in \cite{thanh2023new}. We omit the details.
\end{proof}

We will now present the proof of Theorem \ref{thm.PR}.

\begin{proof}[Proof of Theorem \ref{thm.PR}]
	We first prove the sufficiency part. As before, we can assume that $X_{m,n},m\ge1,n\ge1$ are nonnegative.
	Set
	\[b_{n}=n^{1/p},\ X_{z,s,m,n}=X_{m,n}\mathbf{1}(X_{m,n}\le z^{1/p} b_{2^{s}})+z^{1/p} b_{2^{s}}\mathbf{1}(X_{m,n}> z^{1/p} b_{2^{s}}),\ z>0,s\ge0,m\ge1,n\ge1.\]
	For $m\ge 1, n\ge1$, we have
	\begin{equation}\label{mean.32}
		\begin{split}
			\E \left(\dfrac{1}{(mn)^{1/p}}\max_{\substack{u\le m\\ v \le n}}\left|\sum_{i=1}^{u}\sum_{j=1}^{v}(X_{i,j}-\E X_{i,j})\right|\right)^p&=\int_{0}^{\infty}\mathbb{P}\left(\dfrac{1}{(mn)^{1/p}}\max_{\substack{u\le m\\ v \le n}}\left|\sum_{i=1}^{u}\sum_{j=1}^{v}(X_{i,j}-\E X_{i,j})\right|>z^{1/p}\right)\dx z\\
			&=\int_{0}^{1}\mathbb{P}\left(\dfrac{1}{(mn)^{1/p}}\max_{\substack{u\le m\\ v \le n}}\left|\sum_{i=1}^{u}\sum_{j=1}^{v}(X_{i,j}-\E X_{i,j})\right|>z^{1/p}\right)\dx z\\
			&\quad+\int_{1}^{\infty}\mathbb{P}\left(\dfrac{1}{(mn)^{1/p}}\max_{\substack{u\le m\\ v \le n}}\left|\sum_{i=1}^{u}\sum_{j=1}^{v}(X_{i,j}-\E X_{i,j})\right|>z^{1/p}\right)\dx z\\
			&:=R_1(m,n)+R_2(m,n).
		\end{split}
	\end{equation}
	By Proposition \ref{prop.unif} (i) and (ii) with $L(x)\equiv 1$, there exists a random variable $X$ such that
	the array $\{X_{m,n},m\ge1,n\ge1\}$ is stochastically dominated by $X$ and
	\eqref{fel03} holds. Applying Theorem \ref{thm.Feller}, we obtain
	the WLLN
	\begin{equation}\label{mean.35a}
		\frac{1}{(mn)^{1/p}}\max_{u\le m, v\le n}\left|\sum_{i=1}^{u}\sum_{j=1}^{v}\left(X_{i,j}-\E (X_{i,j}\mathbf{1}(X_{i,j}\le (mn)^{1/p}))\right)\right|\overset{\mathbb{P}}{\to}0\ \text{ as }\ m\vee n\to\infty.
	\end{equation}
	By applying the stochastic domination, \eqref{pr03}
and the Lebesgue dominated convergence theorem, we have
	\begin{equation}\label{mean.35c}
	\frac{1}{(mn)^{1/p}}\max_{u\le m, v\le n}\left|\sum_{i=1}^{u}\sum_{j=1}^{v}\E (X_{i,j}\mathbf{1}(X_{i,j}>(mn)^{1/p}))\right|\le \E\left(|X|^p\mathbf{1}(X_{i,j}>(mn)^{1/p})\right)\to0\ \text{ as }\ m\vee n\to\infty.
\end{equation}
Combining \eqref{mean.35a} and \eqref{mean.35c} yields 
	\begin{equation}\label{mean.35}
	\frac{1}{(mn)^{1/p}}\max_{u\le m, v\le n}\left|\sum_{i=1}^{u}\sum_{j=1}^{v}\left(X_{i,j}-\E X_{i,j}\right)\right|\overset{\mathbb{P}}{\to}0\ \text{ as }\ m\vee n\to\infty.
\end{equation}
	By \eqref{mean.35} and the Lebesgue dominated convergence theorem, we have $\lim_{m\vee n\to\infty}R_{1}(m,n)=0.$ Therefore, in view of \eqref{mean.32}, it remains to prove
	that $\lim_{m\vee n\to\infty}R_{2}(m,n)=0.$ 		
	For $n\ge 1$, $m\ge1$, let $k,\ell$ be integer numbers such that $2^{k-1}\le m<2^{k}$ and $2^{\ell-1}\le m<2^{\ell}$.
	Then
	\begin{equation}\label{mean.41}
		\begin{split}
			R_2(m,n)&= \int_{1}^{\infty}\mathbb{P}\left(\max_{u\le m, v\le n}\left|\sum_{i=1}^{u}\sum_{j=1}^{v}(X_{i,j}-\E X_{i,j})\right|>z^{1/p}(mn)^{1/p}\right)\dx z\\
			&\le \int_{1}^{\infty}\mathbb{P}\left(\max_{u< 2^{k}, v< 2^{\ell}}\left|\sum_{i=1}^{u}\sum_{j=1}^{v}(X_{i,j}-\E X_{i,j})\right|>z^{1/p}b_{2^{k+\ell}}/4\right)\dx z\\
			&\le R_{2,1}(k,\ell)+R_{2,2}(k,\ell)+R_{2,3}(k,\ell),
		\end{split}
	\end{equation}
	where
	\[R_{2,1}(k,\ell)=\int_{1}^{\infty}\mathbb{P}\left(\max_{u< 2^{k}, v< 2^{\ell}}\left|\sum_{i=1}^{u}\sum_{j=1}^{v}(X_{i,j}- X_{z,k+\ell,i,j})\right|>z^{1/p}b_{2^{k+\ell}}/12\right)\dx z,\]
	\[R_{2,2}(k,\ell)=\int_{1}^{\infty}\mathbb{P}\left(\max_{u< 2^{k}, v< 2^{\ell}}\left|\sum_{i=1}^{u}\sum_{j=1}^{v}\E(X_{i,j}- X_{z,k+\ell,i,j})\right|>z^{1/p}b_{2^{k+\ell}}/12\right)\dx z,\]
	\[R_{2,3}(k,\ell)=\int_{1}^{\infty}\mathbb{P}\left(\max_{u< 2^{k}, v< 2^{\ell}}\left|\sum_{i=1}^{u}\sum_{j=1}^{v}(X_{z,k+\ell,i,j}- \E X_{z,k+\ell,i,j})\right|>z^{1/p}b_{2^{k+\ell}}/12\right)\dx z.\]
	By applying the stochastic domination, \eqref{pr03}
	and the Lebesgue dominated convergence theorem, we have
	\begin{equation*}
		\begin{split}
			\int_{1}^{\infty}\mathbb{P}\left(\max_{\substack{u< 2^{k}\\ v< 2^{\ell}}}\left|\sum_{i=1}^{u}\sum_{j=1}^{v}(X_{i,j}-X_{z,k+\ell,i,j})\right|>z^{1/p}b_{2^{k+\ell}}/12\right)\dx z
			&\le \int_{1}^{\infty}\sum_{i=1}^{2^{k}}\sum_{j=1}^{2^{\ell}} \mathbb{P}\left(X_{i,j}>z^{1/p}b_{2^{k+\ell}}\right)\dx z\\
			&\le \frac{1}{b_{2^{k+\ell}}^p}\sum_{i=1}^{2^{k}}\sum_{j=1}^{2^{\ell}} \E\left(X_{i,j}^p\mathbf{1}\left(X_{i,j}>b_{2^{k+\ell}}\right)\right)\\
			&\le \E\left(|X|^p\mathbf{1}\left(|X|>b_{2^{k+\ell}}\right)\right)\\
			&\to 0 \text{ as }k\vee \ell\to\infty,
		\end{split}
	\end{equation*}
	and
	\begin{equation*}
		\begin{split}
			\sup_{z\ge 1}\frac{1}{z^{1/p}b_{2^{k+\ell}}}\max_{u< 2^{k},v< 2^{\ell}}\left|\sum_{i=1}^{u}\sum_{j=1}^{v}\E(X_{i,j}-X_{z,k+\ell,i,j})\right|
			&\le 
			\sup_{z\ge 1}\frac{1}{z^{1/p}b_{2^{k+\ell}}}\sum_{i=1}^{2^{k}}\sum_{j=1}^{2^{\ell}}\E|X_{i,j}-X_{z,k+\ell,i,j}|\\
			&\le\frac{1}{b_{2^{k+\ell}}}\sum_{i=1}^{2^{k}}\sum_{j=1}^{2^{\ell}}\E\left(X_{i,j}\mathbf{1}\left(X_{i,j}> b_{2^{k+\ell}}\right)\right)\\
			&\le \frac{1}{b_{2^{k+\ell}}^{p}}\sum_{i=1}^{2^{k}}\sum_{j=1}^{2^{\ell}}\E\left(X_{i,j}^p\mathbf{1}(X_{i,j}>b_{2^{k+\ell}})\right)\\
			&\le \E\left(|X|^p\mathbf{1}(|X|>b_{2^{k+\ell}})\right)\\
			&\to 0 \text{ as }k\vee \ell\to\infty
		\end{split}
	\end{equation*}
	which, respectively, yields $\lim_{k\vee \ell\to\infty}R_{2,1}(k,\ell)=0$ and $R_{2,2}(k,\ell)=0$ for all large $k\vee \ell$. 
	Finally, by using Tonelli’s theorem and proceeding in a similar manner as the argument in
	the proof of \eqref{fel41},
	we obtain $\lim_{k\vee \ell\to\infty}R_{2,3}(k,\ell)=0$.
	Therefore, \eqref{mean.41} ensures that $\lim_{m\vee n\to\infty}R_{2}(m,n)=0$
	which ends the proof of the sufficiency part of the theorem.
	
	We will now prove the necessity part. Assume that \eqref{pr05} holds. Then
	\[\begin{split}
	\frac{\E|X-\mu|^p}{n}&=\frac{\E\left|\sum_{i=1}^{1}\sum_{j=1}^{n}(X_{i,j}-\mu)-\sum_{i=1}^{1}\sum_{j=1}^{n-1}(X_{i,j}-\mu)\right|^p}{n}\\
	&\le 2^{p-1}\left(\frac{\E\left|\sum_{i=1}^{1}\sum_{j=1}^{n}(X_{i,j}-\mu)\right|^p}{n}+\frac{\E\left|\sum_{i=1}^{1}\sum_{j=1}^{n-1}(X_{i,j}-\mu)\right|^p}{n} \right)\to 0 \text{ as } n\to\infty,
	\end{split}\]
	and therefore $\E|X-\mu|^p<\infty$, which, in turn, implies that \eqref{pr03} holds. Applying the sufficiency part, we obtain 
	\begin{equation}\label{pr07}
		\E\left|\frac{\sum_{i=1}^{m}\sum_{j=1}^{n}X_{i,j}}{(mn)^{1/p}}-(mn)^{1-1/p}\E X\right|^p \to 0 \text{ as }m\vee n\to\infty.
	\end{equation}
	By using \eqref{pr05} and \eqref{pr07}, we have
	\begin{equation}\label{pr09}
		\begin{split}
			\left|(mn)^{1-1/p}(\E X-\mu)\right|^p&\le 2^{p-1}\E\left|\frac{\sum_{i=1}^{m}\sum_{j=1}^{n}X_{i,j}}{(mn)^{1/p}}-(mn)^{1-1/p}\E X\right|^p\\
			&\qquad+ 2^{p-1}\E\left|\frac{\sum_{i=1}^{m}\sum_{j=1}^{n}X_{i,j}}{(mn)^{1/p}}-(mn)^{1-1/p}\mu \right|^p \to 0 \text{ as }m\vee n\to\infty.
		\end{split}
	\end{equation}
	Since $1-1/p\ge 0$, \eqref{pr09} ensures that $\E X=\mu$. The proof of the necessity part is completed.	
\end{proof}

\section{Some corollaries and further remarks}\label{sec:remarks}

\subsection{Limit theorems under bounded moment conditions}
From Proposition \ref{prop.unif}, it follows that certain bounded moment conditions on the random field can accomplish 
the stochastic domination condition.
This illustrates the flexibility of the stochastic domination condition in comparison to the identical distribution condition. 
Specifically, by using Proposition \ref{prop.unif} and the results in Section \ref{sec.intro} (Theorems \ref{thm.BKHRE}, \ref{thm.Feller} and \ref{thm.PR}), 
we obtain the following corollaries. Details of the proof will be omitted. 

\begin{corollary}\label{thm.main.BKHRE.unif}
	Let $p\ge 1$, $\alpha>1/2$ and let 
	$\{X_{m,n}, \ m\ge 1,\ n \geq 1\}$ be a double array of random variables.
	Assume that the $\{X_{m,n}, \ m\ge 1,\ n \geq 1\}$ satisfies Condition $(H_{2q})$ with $q=1$ if $1\le p<2$ and $q>(\alpha p-1)/(2\alpha-1)$ if $p\ge 2$.
	If 	\[\sup_{m\ge1,n\ge1}\E\left(|X_{m,n}|^p \log|X_{m,n}|\log_{\nu}^{(2)}|X_{m,n}|\right)<\infty\]
	for some positive integer $\nu$, then \eqref{mz05} holds.
\end{corollary}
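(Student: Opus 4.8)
The plan is to deduce the corollary directly from Theorem~\ref{thm.BKHRE} by upgrading the bounded moment hypothesis into a stochastic domination hypothesis via Proposition~\ref{prop.unif}(iii). Since Condition~$(H_{2q})$ with the stated value of $q$ is assumed outright, the only object that must be manufactured is a single dominating random variable $X$ for which the moment condition \eqref{mz04} holds; once that is in hand, the conclusion \eqref{mz05} is immediate from Theorem~\ref{thm.BKHRE}.

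First I would invoke Proposition~\ref{prop.unif}(iii) with the slowly varying function $L(x)=\log x$. As recorded just before Proposition~\ref{prop.unif}, this $L$ is slowly varying with de Bruijn conjugate $\tilde L(x)=1/L(x)$, so the proposition is legitimately applicable for this choice. Taking $\Lambda=\{(m,n):m\ge1,n\ge1\}$ and $X_\lambda=X_{m,n}$, the hypothesis of the proposition reads $\sup_{m,n}\E\left(|X_{m,n}|^p\log(|X_{m,n}|^p)\log_{\nu}^{(2)}|X_{m,n}|\right)<\infty$. Because $\log(|X|^p)=p\log|X|$ once $|X|\ge 2$ (recall the convention $\log a=\ln(a\vee 2)$), the weights $\log(|X_{m,n}|^p)$ and $\log|X_{m,n}|$ differ only by the constant factor $p$ on the relevant range and by a bounded amount on $\{|X_{m,n}|<2\}$; hence this hypothesis is exactly equivalent to the corollary's assumption $\sup_{m,n}\E\left(|X_{m,n}|^p\log|X_{m,n}|\log_{\nu}^{(2)}|X_{m,n}|\right)<\infty$.

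Proposition~\ref{prop.unif}(iii) then produces a random variable $X$, with the distribution function displayed in \eqref{domination02}, that stochastically dominates the entire array and satisfies \eqref{domination01}, namely $\E\left(|X|^p L(|X|^p)\right)=\E\left(|X|^p\log(|X|^p)\right)<\infty$. By the same comparison $\log(|X|^p)\asymp\log|X|$, this is precisely condition \eqref{mz04}. Thus the array is stochastically dominated by a random variable satisfying \eqref{mz04} and obeys Condition~$(H_{2q})$ with the required $q$, so Theorem~\ref{thm.BKHRE} applies and delivers \eqref{mz05}.

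The argument is essentially a bookkeeping reduction, so I do not expect a serious obstacle. The only point demanding genuine care is the comparison of the two logarithmic weights: one must check that replacing $\log(|X|^p)$ by $\log|X|$, while absorbing the factor $p$ together with the bounded contribution from $\{|X|<2\}$, preserves finiteness of the relevant expectations at \emph{both} ends of the chain---both when transferring the corollary's hypothesis into the hypothesis of Proposition~\ref{prop.unif}(iii), and when transferring its conclusion \eqref{domination01} into the form \eqref{mz04} required by Theorem~\ref{thm.BKHRE}. Everything else is a direct invocation of the two cited results, which is why the detailed computation may safely be omitted.
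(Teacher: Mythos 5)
Your proof is correct and is exactly the route the paper intends: the paper's (omitted) proof of this corollary is precisely the reduction via Proposition \ref{prop.unif}(iii) with $L(x)=\log x$ followed by an application of Theorem \ref{thm.BKHRE}. The comparison $\log(|X|^p)\asymp\log|X|$ (up to the factor $p$, using the convention $\log a=\ln(a\vee 2)$), which you verify at both ends of the reduction, is the only computation the paper leaves to the reader.
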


\begin{corollary}\label{thm.main.PR.unif}
	
	Let $1\le p<2$ and let 
	$\{X_{m,n}, \ m\ge 1,\ n \geq 1\}$ be a double array of random variables satisfying Condition $(H_{2})$.	
	If 	\[\sup_{m\ge1,n\ge1}\E\left(|X_{m,n}|^p \log_{\nu}^{(2)}|X_{m,n}|\right)<\infty\]
	for some positive integer $\nu$, then \eqref{pr04} holds.
\end{corollary}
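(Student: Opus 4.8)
The plan is to reduce Corollary~\ref{thm.main.PR.unif} to Theorem~\ref{thm.PR} by manufacturing a suitable dominating random variable from the bounded moment hypothesis via Proposition~\ref{prop.unif}. First I would specialize Proposition~\ref{prop.unif} to the slowly varying function $L(x)\equiv 1$, whose de Bruijn conjugate is $\tilde{L}(x)\equiv 1$. With this choice one has $L(|X_{m,n}|^p)=1$, so the hypothesis of part~(iii) of Proposition~\ref{prop.unif}, namely $\sup_{m,n}\E(|X_{m,n}|^p L(|X_{m,n}|^p)\log_{\nu}^{(2)}|X_{m,n}|)<\infty$, coincides exactly with the standing assumption $\sup_{m,n}\E(|X_{m,n}|^p\log_{\nu}^{(2)}|X_{m,n}|)<\infty$ of the corollary.

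Next, invoking Proposition~\ref{prop.unif}(iii) produces a random variable $X$, with distribution function given by \eqref{domination02}, which stochastically dominates the array $\{X_{m,n},m\ge1,n\ge1\}$ and, by the conclusion \eqref{domination01} (again with $L\equiv 1$), satisfies $\E|X|^p<\infty$. This is precisely the moment condition \eqref{pr03} required in Theorem~\ref{thm.PR}.

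Finally, since the array is assumed to satisfy Condition $(H_2)$ and is now stochastically dominated by a random variable $X$ obeying \eqref{pr03}, I would apply the sufficiency part of Theorem~\ref{thm.PR} verbatim to obtain \eqref{pr04}. I expect no substantive obstacle here: the entire analytic content---the Rosenthal-type maximal inequality of Theorem~\ref{thm.maximal.ineq1}, the $\mathcal{L}_p$-convergence argument, and the passage from a bounded moment condition to stochastic domination---has already been isolated in Theorem~\ref{thm.PR} and Proposition~\ref{prop.unif}. The only point requiring care is the bookkeeping that $L\equiv 1$ makes both the hypothesis of Proposition~\ref{prop.unif}(iii) and its conclusion \eqref{domination01} collapse to the desired statements; once that identification is made, the corollary follows immediately by composing the two cited results, which is why the detailed verification may safely be omitted.
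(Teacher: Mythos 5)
Your proposal is correct and matches the paper's intended argument exactly: the paper states that these corollaries follow from Proposition \ref{prop.unif} together with Theorem \ref{thm.PR} and omits the details, and your instantiation with $L(x)\equiv 1$ (so that the hypothesis of Proposition \ref{prop.unif}(iii) becomes the corollary's moment bound and the conclusion \eqref{domination01} becomes \eqref{pr03}) followed by the sufficiency part of Theorem \ref{thm.PR} is precisely that reduction. The only superfluous detail is the mention of the de Bruijn conjugate $\tilde{L}$, which plays no role in part (iii).
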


\begin{corollary}\label{thm.main.Feller.unif}
	Let $1\le p<2$ and let 
	$\{X_{m,n}, \ m\ge 1,\ n \geq 1\}$ be a double array of random variables satisfying Condition $(H_{2})$.	
	If 	$\{|X_{m,n}|^{p}, \ m\ge 1,\ n \geq 1\}$ is uniformly integrable, then
	\begin{equation}\label{fel04.unif}
		\frac{\max_{u\le m,v\le n}\left|\sum_{i=1}^{u}\sum_{j=1}^{v}(X_{i,j}-\E X_{i,j})\right|}{(mn)^{1/p}}\overset{\mathbb{P}}{\to} 0 \text{ as }m\vee n\to\infty.
	\end{equation}
\end{corollary}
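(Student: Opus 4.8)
The plan is to deduce Corollary \ref{thm.main.Feller.unif} from Theorem \ref{thm.Feller}, the only extra work being to produce a suitable dominating variable out of the uniform integrability hypothesis and then to replace the truncated centering $\E Z_{mn,i,j}$ appearing in \eqref{fel04} by the genuine centering $\E X_{i,j}$.

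First I would invoke Proposition \ref{prop.unif}(ii) with $L(x)\equiv 1$, whose de Bruijn conjugate is $\tilde L(x)\equiv 1$. Since $\{|X_{m,n}|^p\}$ is uniformly integrable, the proposition furnishes a random variable $X$, with distribution function $F(x)=1-\sup_{m,n}\mathbb{P}(|X_{m,n}|>x)$, such that $\{X_{m,n}\}$ is stochastically dominated by $X$ and
\[
\lim_{x\to\infty}x\,\mathbb{P}\bigl(|X|>x^{1/p}\bigr)=0,
\]
which is precisely \eqref{fel03}. Because the array satisfies Condition $(H_2)$ and is stochastically dominated by this $X$, Theorem \ref{thm.Feller} applies and gives, with $Z_{mn,i,j}=X_{i,j}\mathbf 1(|X_{i,j}|\le (mn)^{1/p})$,
\[
\frac{\max_{u\le m,v\le n}\bigl|\sum_{i=1}^{u}\sum_{j=1}^{v}(X_{i,j}-\E Z_{mn,i,j})\bigr|}{(mn)^{1/p}}\overset{\mathbb{P}}{\to}0\quad\text{as }m\vee n\to\infty.
\]

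Next, since $X_{i,j}-\E X_{i,j}=(X_{i,j}-\E Z_{mn,i,j})-\E\bigl(X_{i,j}\mathbf 1(|X_{i,j}|>(mn)^{1/p})\bigr)$ and the last summand is deterministic, the triangle inequality reduces the problem to showing that the centering-difference term
\[
\frac{\max_{u\le m,v\le n}\bigl|\sum_{i=1}^{u}\sum_{j=1}^{v}\E\bigl(X_{i,j}\mathbf 1(|X_{i,j}|>(mn)^{1/p})\bigr)\bigr|}{(mn)^{1/p}}
\]
tends to $0$. Writing $g(t)=\sup_{m,n}\E\bigl(|X_{m,n}|^p\mathbf 1(|X_{m,n}|>t)\bigr)$, uniform integrability yields $g(t)\to0$ as $t\to\infty$. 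Using $p\ge1$, on $\{|X_{i,j}|>t\}$ one has $|X_{i,j}|\le t^{1-p}|X_{i,j}|^p$, so that $\E(|X_{i,j}|\mathbf 1(|X_{i,j}|>t))\le t^{1-p}g(t)$; bounding the maximum by the full double sum (which has $mn$ terms) and taking $t=(mn)^{1/p}$ collapses the displayed quantity to $g\bigl((mn)^{1/p}\bigr)\to0$. Combining this with the previous display proves \eqref{fel04.unif}.

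The point to watch is that, in contrast with the $\mathcal L_p$ statement of Theorem \ref{thm.PR}, the dominating variable $X$ need not satisfy $\E|X|^p<\infty$ under mere uniform integrability of $\{|X_{m,n}|^p\}$: the tail $\sup_{m,n}\mathbb{P}(|X_{m,n}|>t)$ can be as large as $1/(t^p\log t)$, for which $\E|X|^p=\infty$. Consequently the centering-difference term cannot be controlled by dominated convergence applied to $X$, as was done in the proof of Theorem \ref{thm.PR}; instead one must exploit the uniform integrability directly through the function $g$. This is the only genuinely nonroutine step, and it is precisely the reason the conclusion is stated as convergence in probability rather than in $\mathcal L_p$.
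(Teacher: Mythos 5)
Your proof is correct and follows essentially the route the paper intends: the paper obtains this corollary by combining Proposition \ref{prop.unif}(ii) (with $L\equiv 1$, so $\tilde L\equiv 1$) with Theorem \ref{thm.Feller}, and explicitly omits all details. The one step the paper glosses over---passing from the truncated centering $\E Z_{mn,i,j}$ in \eqref{fel04} to the true centering $\E X_{i,j}$ in \eqref{fel04.unif}---you handle correctly, and your observation that this step cannot be done by dominated convergence applied to the dominating variable $X$ (since uniform integrability of $\{|X_{m,n}|^{p}\}$ does not guarantee $\E|X|^{p}<\infty$ for the sup-tail distribution of Proposition \ref{prop.unif}(ii)), but must instead use the uniform integrability directly via $g(t)=\sup_{m,n}\E\left(|X_{m,n}|^{p}\mathbf{1}(|X_{m,n}|>t)\right)$, is exactly the right way to fill in the omitted detail.
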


\begin{problem}\label{open3}
	In Corollary \ref{thm.main.Feller.unif}, if the random variables $X_{m,n}, \ m\ge 1,\ n \geq 1$ are
	independent, then by the method in Pyke and Root \cite{pyke1968convergence}, we can obtain
	convergence in mean of order $p$ in \eqref{fel04.unif}.
	If we only assume $X_{m,n}, \ m\ge 1,\ n \geq 1$ satisfy Condition $(H_2)$, then we do not know whether or not
	the convergence in mean of order $p$ prevails in \eqref{fel04.unif}. To the best of our knowledge, this problem is
	unsolved even in the case of dimension one.
\end{problem}

\subsection{Limit theorems for dependent random fields with regularly varying norming constants}

Theorems \ref{thm.BKHRE}, \ref{thm.Feller} and \ref{thm.PR} can be extended to the case where the norming constants are regularly varying. 
For instance, we have an extension of Theorem \ref{thm.BKHRE} as follows. The proof employs some properties of slowly varying functions presented in \cite{anh2021marcinkiewicz,thanh2020theBaum,thanh2023weak,thanh2023hsu} as well as the technique developed in Sections \ref{sec.maximal.inequalities} and \ref{sec.bk}. 
We leave the details to the interested reader.

\begin{theorem}\label{thm.BKHRE.regular}
	Let $p\ge 1$, $1/2<\alpha\le 1$, $\alpha p\ge 1$ and let 
	$\{X_{m,n}, \ m\ge 1,\ n \geq 1\}$ be a double array of identically distributed random variables. 
	Let $L(x)\ge 1$ be an increasing slowly varying function and $\tilde{L}(x)$ 
	the de Bruijn conjugate of $L(x)$.
	Assume that the array $\{X_{m,n}, \ m\ge 1,\ n \geq 1\}$ satisfies Condition $(H_{2q})$ with $q=1$ if $1\le p<2$ and $q>(\alpha p-1)/(2\alpha-1)$ if $p\ge 2$.
	Then
	\begin{equation*}\label{bk.regular}
		\sum_{m=1}^{\infty}\sum_{n=1}^{\infty}
		(mn)^{\alpha p-2}\mathbb{P}\left(\max_{u\le m,v\le n}\left|\sum_{i=1}^{u}\sum_{j=1}^{v}X_{i,j}\right|>\varepsilon (mn)^{\alpha}\tilde{L}((mn)^{\alpha})\right)<\infty \text{ for all } \varepsilon>0
	\end{equation*}
	if and only if
	\begin{equation*}
		\E X_{1,1}=0 \text{ and }\	\E\left(|X_{1,1}|^p L^p(|X_{1,1}|) \log|X_{1,1}|\right)<\infty				.
	\end{equation*}
\end{theorem}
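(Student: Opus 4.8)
The plan is to run the proof of Theorem~\ref{thm.BKHRE} verbatim in structure, replacing the power norming $b_n=n^\alpha$ by $b_n=n^\alpha\tilde L(n^\alpha)$ and inserting the slowly varying factor into the weights $\lambda_{m,n,s,t}$. First I would split $X_{m,n}$ into positive and negative parts so as to assume $X_{m,n}\ge 0$, and pass to the dyadic series exactly as in Remark~\ref{rem:slln1}. Since $mn\asymp 2^{k+\ell}$ whenever $2^{k-1}\le m<2^k$ and $2^{\ell-1}\le n<2^\ell$, slow variation yields $\tilde L((mn)^\alpha)\asymp\tilde L(2^{(k+\ell)\alpha})$, so it suffices to bound
\[
\sum_{m=1}^\infty\sum_{n=1}^\infty 2^{(m+n)(\alpha p-1)}\,\Pro\!\left(\max_{\substack{1\le u<2^m\\ 1\le v<2^n}}\left|\sum_{i=1}^{u}\sum_{j=1}^{v}(X_{i,j}-\E X_{i,j})\right|>\varepsilon\, b_{2^{m+n}}\right).
\]
The decisive choice is
\[
\lambda_{m,n,s,t}=2^{a(m+n)+(\alpha-a)(s+t)}\,\tilde L\!\left(2^{\alpha(s+t)}\right),
\]
with $a$ taken exactly as in \eqref{def.ab}. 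Because $\alpha-a>0$ and $\tilde L$ is slowly varying, the Abelian form of Karamata's theorem gives $a_{m,n}=\sum_{s,t}\lambda_{m,n,s,t}\asymp 2^{\alpha(m+n)}\tilde L(2^{\alpha(m+n)})=b_{2^{m+n}}$, which is the analogue of \eqref{lambda.mn.1} that makes Theorem~\ref{thm.maximal.ineq2} applicable.

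I would then check the hypotheses \eqref{max22} and \eqref{max23} for $m\vee n$ large, apply Theorem~\ref{thm.maximal.ineq2}, and split the bound into the tail term and the moment terms, following \eqref{mz15} in the range $1\le p<2$ and \eqref{hr15} in the range $p\ge2$. Each resulting series is evaluated by interchanging summation and expectation and absorbing the factors $\tilde L(2^{\alpha(s+t)})^{\pm2q}$ through Karamata's theorem, so that all of them reduce to the single master estimate
\[
\sum_{N=1}^{\infty}N\,2^{N\alpha p}\,\Pro\!\left(|X|>b_{2^N}\right)\asymp\E\!\left(|X|^p L^p(|X|)\log|X|\right),
\]
which is the regularly varying counterpart of the computation behind \eqref{mz17}. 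Here the de Bruijn conjugate relation $\lim_{x\to\infty}\tilde L(x)L(x\tilde L(x))=1$ turns the event $\{|X|>2^{N\alpha}\tilde L(2^{N\alpha})\}$ into $\{|X|^p L^p(|X|)\ge c\,2^{N\alpha p}\}$ for a fixed $c>0$, while the weight $N$ --- produced by the $\asymp N$ pairs $(m,n)$ with $m+n=N$ --- is precisely matched by the factor $\log|X|$ in the moment hypothesis (note $\log L(|X|)=o(\log|X|)$). The deterministic term analogous to $J_3$ behaves as $\sum_N N\,2^{N(\alpha p+q-1-2\alpha q)}\tilde L(2^{\alpha N})^{-2q}$, which converges precisely because $q>(\alpha p-1)/(2\alpha-1)$, the slowly varying factor not affecting the geometric convergence. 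Throughout, Lemma~\ref{lem.moment.estimate03} is invoked in its slowly varying form.

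For the necessity part I would argue as in the necessity of Theorem~\ref{thm.BKHRE}. Assuming the dyadic series converges with $\mu=0$, the diagonal terms force $\lim_{m\vee n\to\infty}\Pro(\max_{k\le m,\ell\le n}|X_{k,\ell}|>\varepsilon b_{mn})=0$; Lemma~\ref{lem.BC}, valid under Condition~$(H_2)$, then upgrades this to $mn\,\Pro(|X_{1,1}|>b_{mn})\le C\,\Pro(\max_{k\le m,\ell\le n}|X_{k,\ell}|>b_{mn})$ for large $m\vee n$. Summing against $(mn)^{\alpha p-2}$ and applying the slowly varying version of Lemma~\ref{lem.moment.estimate03} produces $\E(|X|^p L^p(|X|)\log|X|)<\infty$. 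The already-proved sufficiency then supplies the corresponding strong law, and comparing it with the hypothesis as in \eqref{mz16.converse}--\eqref{mz17.converse} forces $\E X=0$.

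The hard part will be the uniform bookkeeping of $\tilde L$ through the double-sum Karamata estimates: one must verify that each summation over $m\ge s$, $n\ge t$ and over the truncation levels still concentrates on its extreme term up to a factor $\asymp1$ in the presence of $\tilde L(2^{\alpha(s+t)})^{\pm2q}$, and that the de Bruijn inversion is applied in the correct direction so that exactly $\E(|X|^p L^p(|X|)\log|X|)<\infty$ emerges rather than a spuriously stronger or weaker moment condition. These manipulations are precisely those developed in \cite{thanh2020theBaum,thanh2023hsu}, so the task is to transcribe them faithfully into the two-index setting rather than to devise genuinely new arguments.
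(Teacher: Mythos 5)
Your proposal is correct and follows essentially the same route the paper prescribes: the paper itself omits the proof of Theorem~\ref{thm.BKHRE.regular}, saying only that it combines the technique of Sections~\ref{sec.maximal.inequalities} and \ref{sec.bk} with properties of slowly varying functions from the cited references, and your sketch (norming $b_n=n^{\alpha}\tilde L(n^{\alpha})$, weights $\lambda_{m,n,s,t}$ carrying the factor $\tilde L(2^{\alpha(s+t)})$ so that $a_{m,n}\asymp b_{2^{m+n}}$, de Bruijn inversion turning $\{|X|>2^{N\alpha}\tilde L(2^{N\alpha})\}$ into $\{|X|^pL^p(|X|)\gtrsim 2^{N\alpha p}\}$, and the weight $N$ matching $\log|X|$) is exactly that recipe, including the necessity argument via Lemma~\ref{lem.BC}. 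One trivial correction: for $1\le p<2$ the exponent $a$ must be chosen with $\alpha p/(2q)<a<\alpha$ as at the start of the proof of Theorem~\ref{thm.BKHRE}, not as in \eqref{def.ab}, which is vacuous when $q=1$.
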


\subsection{Further remarks on limit theorems for mixing random fields and negatively dependent random fields}\label{subsec.mixing}

In this subsection,	for any two $\sigma$-fields $\mathcal{A},\mathcal{B}\subset \mathcal{F}$, we define
the maximal coefficient of correlation
\[\rho(\mathcal{A},\mathcal{B})=\sup\frac{|\cov (XY)|}{\left(\var(X)\var(Y)\right)^{1/2}},\]
where the $\sup$ is taken over all pairs of
random variables $X\in\mathcal{L}_2(\mathcal{A})$ and $Y\in\mathcal{L}_2(\mathcal{B})$, and $0/0$ is interpreted to be $0$.

The concepts of $\rho^*$-mixing and $\rho'$-mixing random fields were introduced by Bradley and Utev \cite{bradley1994second} (see also in Bradley \cite{bradley2010dependence}, Bradley and Tone \cite{bradley2017central}).
Let $\mathbb{Z}_{+}$ be the set of positive integers and let $d\in\mathbb{Z}_{+}$.
Let $\mathbb{Z}^{d}_{+}$ denote the positive integer $d$-dimensional lattice points.
The notation $\mathbf {m \prec n}$ (or $\mathbf{n}\succ\mathbf{m}$), 
where $\mathbf m$ = $(m_1, m_2,..., m_d)$ and $\mathbf n$ = $(n_1, n_2,..., n_d) \in \mathbb{Z}^{d}_{+},$ 
means that $m_i \le n_i, 1 \le i \le d$. 
For $\mathbf{n}$ = $(n_1, n_2,..., n_d) \in \mathbb{Z}^{d}_{+},$ let $\|\mathbf{n}\|=(n_{1}^{2}+\cdot+n_{d}^{2})^{1/2}$
denote the Euclidean norm. Let $\mathcal{X}=\left\{ X_{\mathbf{n}}, \mathbf{n} \in \mathbb{Z}^{d}_{+} \right\}$ be a $d$-dimensional random field.
For two nonempty disjoint subsets $S_1$ and $S_2$ of $\mathbb{Z}^{d}_{+}$, denote
\[{\rm{dist}}(S_1,S_2):=\inf_{\mathbf{n}\in S_1,\mathbf{m}\in S_2}\|\mathbf{n}-\mathbf{m}\|,\]
and
\[\rho(S_1,S_2):=\rho(\sigma(X_{\mathbf{n}},\mathbf{n}\in S_1),\sigma(X_{\mathbf{n}},\mathbf{n}\in S_2)).\]
For $n\ge 1$, we define
\begin{equation*}\label{rho1}
	\rho^*(\mathcal{X},n)=\sup \{\rho(S_1,S_2): {\rm{dist}}(S_1,S_2)\ge n\}
\end{equation*}
and
\begin{equation}\label{rho2}
	\rho'(\mathcal{X},n)=\sup \rho(S_1,S_2),
\end{equation}
where in \eqref{rho2}, the $\sup$ is taken over all pairs of nonempty disjoint subsets $S_1$ and $S_2$ of $\mathbb{Z}^{d}_{+}$ of the form
\[S_1=\{\mathbf{n}=(n_1, n_2,..., n_d) \in \mathbb{Z}^{d}_{+}:n_i\in Q_1\}\]
and
\[S_2=\{\mathbf{n}=(n_1, n_2,..., n_d) \in \mathbb{Z}^{d}_{+}:n_i\in Q_2\},\]
where $i=1,\ldots d$, and $Q_1$ and $Q_2$ are two nonempty disjoint subsets of $\mathbb{Z}^{1}_{+}$ satisfying ${\rm{dist}}(Q_1,Q_2)\ge n$.
As noted by Bradley and Utev \cite{bradley1994second}, $\rho^*$ is based on ``general'' disjoint sets $S_1$ and $S_2$ whereas $\rho'$ 
is based on disjoint ``one-dimensional cylinder sets'' $S_1$ and $S_2$. 
It is clear that 
$0\le \rho'(n)\le \rho^*(n)\le 1$ for all $n\ge 1$. The random field $\mathcal{X}$ is said to be 
\textit{$\rho^*$-mixing} (resp., \textit{$\rho'$-mixing}) if $\lim_{n\to\infty}\rho^{*}(\mathcal{X},n)=0$ (resp., $\lim_{n\to\infty}\rho'(\mathcal{X},n)=0$).
If $\lim_{n\to\infty}\rho^{*}(\mathcal{X},n)<1$, then the array $\mathcal{X}$ satisfies Condition $H_{2q}$ for all $q\ge1$ (see Theorem 4 of Peligrad and Gut \cite{peligrad1999almost}).
If $\lim_{n\to\infty}\rho'(\mathcal{X},n)<1$, then the array $\mathcal{X}$ satisfies Condition $H_{2q}$ for all $q\ge1$ (see Theorem
29.30 of Bradley \cite{bradley2007introduction}).

Limit theorems for mixing random fields were studied extensively
by various authors. We refer to Bradley \cite{bradley1992spectral,bradley1993equivalent}, Bradley and Tone \cite{bradley2017central}
for the central limit theorems for $\rho^{*}$-mixing and $\rho'$-mixing random fields,
Kuczmaszewska and Lagodowski \cite{kuczmaszewska2011convergence}, Peligrad and Gut \cite{peligrad1999almost}
and the references therein for the Hsu--Robbins--Erd\"{o}s--Splitzer--Baum--Katz-type 
theorem and SLLNs for $\rho^{*}$-mixing random fields.
However, to the best of our knowledge, there
are no results in the literature on complete convergence or SLLNs for
$\rho'$-mixing random fields. Let $\mathcal{X}=\left\{ X_{\mathbf{n}}, \mathbf{n} \in \mathbb{Z}^{d}_{+} \right\}$ be a $d$-dimensional random field.
A Rosenthal-type maximal inequality for the random field $\mathcal{X}$ under the condition 
$\lim_{n\to\infty}\rho^{*}(\mathcal{X},n)<1$ was provided by Peligrad and Gut \cite{peligrad1999almost}
but such an inequality is not available for the $\rho'$-mixing case.
This prevents us from using existing methods to establish laws of large numbers for the maximum of multiple sums for $\rho'$-mixing random fields.

As mentioned in the above, if $\lim_{n\to\infty}\rho'(\mathcal{X},n)<1$, then $\mathcal{X}$ satisfies Condition $H_{2q}$ 
for all $q\ge1$. Therefore, all results in Sections 
\ref{sec.intro}--\ref{sec.maximal.inequalities} hold true for dependent
random fields satisfying $\lim_{n\to\infty}\rho'(\mathcal{X},n)<1$.
For the case where $d=1$, we have $\rho'(n)= \rho^*(n)$ for all $n\ge1$ and thus there is no difference between
$\rho^{*}$-mixing sequences and $\rho'$-mixing sequences.
However, for the case where $d\ge 2$, It was shown by Bradley \cite[Theorem 1.9]{bradley2010dependence} that
for all nonincreasing sequence
$\{c_n,n\ge 1\}\subset [0,1]$, there exists a strictly stationary random field
$\left\{ X_{\mathbf{n}}, \mathbf{n} \in \mathbb{Z}^{d}_{+} \right\}$ such that $\rho^*(n)=1$ for all $n\ge1$
and $\rho'(n)=c_n$ for all $n\ge2$.  Therefore for the case of dimension $d\ge2$, our
result on the Baum--Katz--Erd\"{o}s--Hsu--Robbins-type theorem under condition $\lim_{n\to\infty}\rho'(\mathcal{X},n)<1$
significantly improves the Peligrad and Gut \cite{peligrad1999almost} result in the sense that it 
cannot be derived from the Peligrad and Gut \cite{peligrad1999almost} result
for dependent random fields with condition $\lim_{n\to\infty}\rho^{*}(\mathcal{X},n)<1$.

Kuczmaszewska and Lagodowski \cite{kuczmaszewska2011convergence} used the method in the Peligrad and Gut \cite{peligrad1999almost} 
to establish the Hsu--Robbins--Erd\"{o}s--Spitzer--Baum--Katz-type theorem for
negatively associated random fields. It is well known that
negative association is strictly stronger than
pairwise negative dependence (see, \cite[Property P3 and Remark 2.5]{joag1983negative}).
The Rosenthal-maximal inequalities also hold for negatively associated mean zero random variables (see, e.g., Shao \cite{shao2000comparison}, Giap et al. \cite{giap2022some})
but for pairwise negatively dependent mean zero random variables, \eqref{kd.inequality} is not valid even in the case of
dimension one. 
By Lemma 1 (ii) and Lemma 3 of Lehmann \cite{lehmann1966some},
pairwise negatively dependent random variables satisfy 
Condition $(H_2)$. Therefore, Theorem \ref{thm.BKHRE} for the case $1\le p<2$, and Theorems \ref{thm.Feller} and \ref{thm.PR}
can be applied to the pairwise negatively dependent random fields. As stated in Section \ref{sec.intro}, these results are new even when the underlying random
variables are pairwise independent.

There is another dependence structure called extended negative dependence (see, e.g., Chen et al. \cite{chen2010strong}),
which is strictly weaker than negative association. 
Lemmas 2.1 and 2.3 of Shen et al. \cite{shen2017weak} ensure that 
extended negative dependence possesses Condition $(H_{2q})$ for all $q\ge 1$.
Therefore, our result in Sections \ref{sec.intro}--\ref{sec.maximal.inequalities}
can also be applied to this dependence structure. We note that a Kolmogorov--Doob-type maximal inequality or
a Rosenthal-type maximal inequality is not available for extended negatively dependent random variables and negatively dependent random variables, even
in the case of dimension one. Theorems \ref{thm.BKHRE}, \ref{thm.Feller} and \ref{thm.PR} for these two dependence structures 
have never appeared
in the literature. 
Chen et al. \cite{chen2010strong} were apparently the first to 
establish the Kolmogorov SLLN for extended negatively dependent random variables in the case of dimension one.

Finally, we remark that even for the $\rho^{*}$-mixing case with condition $\lim_{n\to\infty}\rho^{*}(\mathcal{X},n)<1$, the
Rosenthal maximal inequality
provided by Peligrad and Gut \cite{peligrad1999almost} is not sharp since the bound of
the second moment of the maximum $d$-index sums has an additional factor $(\log|\mathbf{n}|)^{2d}$ (see Corollary 2 in
Peligrad and Gut \cite{peligrad1999almost}). Therefore, the Peligrad and Gut \cite{peligrad1999almost} result
on the Baum--Katz--Erd\"{o}s--Hsu--Robbins-type 
theorem has to require $\alpha>1/p$, and so we cannot
derive the Marcinkiewicz--Zygmund SLLN for random fields from their result.
In Peligrad and Gut \cite{peligrad1999almost}, the authors only obtained the Kolmogorov SLLN
(i.e., the case $p=1$ in the Marcinkiewicz--Zygmund SLLN) by using the Etemadi subsequences method (see \cite[Theorem 6]{peligrad1999almost}).
Similar to Peligrad and Gut \cite{peligrad1999almost}, Kuczmaszewska and Lagodowski \cite{kuczmaszewska2011convergence} also required $\alpha>1/p$ in
their result (see \cite[Theorem 3.2]{kuczmaszewska2011convergence}).\\

\begin{center}
	\textbf{Acknowledgments} 
\end{center}
The author is grateful to the Reviewer for carefully reading the manuscript and for offering
valuable comments and suggestions which enabled him to greatly improve the presentation of the paper.
In particular, the Reviewer kindly brought to the author’s attention the $2s$-tuplewise independent random variables leading to Remark \ref{rem.open}.
The author also grateful to Vu Thi Ngoc Anh and Nguyen Chi Dzung for useful comments and remarks. \\

\noindent\textbf{Declaration of competing interest:}
The author declares that he has no known competing financial interests or personal relationships that could have
appeared to influence the work reported in this paper.\\

\noindent \textbf{Funding:}
The author did not receive support from any organization for this work.

\appendix

\section{}
\renewcommand{\thesection}{\Alph{section}}

In this section, we will present two technical lemmas and prove Claim \ref{claim.estimate}.

\begin{lemma}\label{lem.BC}
	Let $\{X_{m,n},m\ge1,n\ge1\}$ be a double array of identically distributed
	random variables satisfying Condition $(H_{2})$ and let $\{b_{m,n},m\ge1,n\ge1\}$ be a double array of
	positive constants.
	If \begin{equation}\label{bc02}
		\frac{\max_{1\le i\le m, 1\le j\le n}|X_{i,j}|}{b_{m,n}}\overset{\mathbb{P}}{\to} 0 \ \text{ as }\ m\vee n\to\infty,
	\end{equation}
	then for all $\varepsilon>0$, there exists $n_0$ such that
	\begin{equation}\label{bc03}
		mn\mathbb{P}(|X_{1,1}|>b_{m,n}\varepsilon)\le C \mathbb{P}\left(\max_{1\le i\le m, 1\le j\le n}|X_{i,j}|>b_{m,n}\varepsilon\right)\ \text{ for all }\ m\vee n\ge n_0,
	\end{equation}
	and so
	\begin{equation*}\label{bc04}
		mn\mathbb{P}(|X_{1,1}|>b_{m,n}\varepsilon)\to 0 \ \text{ as }\ m\vee n\to\infty.
	\end{equation*}
\end{lemma}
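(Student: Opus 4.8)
The plan is to reformulate the statement as a lower bound for the probability that a counting variable is positive, and to prove that bound by a second-moment (Paley--Zygmund) argument in which Condition $(H_2)$ supplies the needed variance estimate. Fix $\varepsilon>0$ and, for each $m,n$, write $c=b_{m,n}\varepsilon$ and set
$$N=N_{m,n}=\sum_{i=1}^{m}\sum_{j=1}^{n}\mathbf{1}(|X_{i,j}|>c).$$
Since the $X_{i,j}$ are identically distributed, $\E N=mn\,\mathbb{P}(|X_{1,1}|>c)$, which is exactly the left-hand side of \eqref{bc03}. Moreover $\{\max_{1\le i\le m,1\le j\le n}|X_{i,j}|>c\}=\{N\ge1\}$, so hypothesis \eqref{bc02} says precisely that $\mathbb{P}(N\ge1)\to0$ as $m\vee n\to\infty$, and the right-hand side of \eqref{bc03} equals $C\,\mathbb{P}(N\ge1)$. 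Thus everything reduces to showing $\E N\le C\,\mathbb{P}(N\ge1)$ once $m\vee n$ is large.

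The key step is the second-moment estimate $\var(N)\le C\,\E N$, and this is where I would invoke Condition $(H_2)$. The indicator $\mathbf{1}(|x|>c)=\mathbf{1}(x>c)+\mathbf{1}(x<-c)$ is not monotone, so I would split accordingly: the map $x\mapsto\mathbf{1}(x>c)$ is increasing and $x\mapsto\mathbf{1}(x<-c)$ is decreasing. Condition $(H_2)$ is stated for increasing functions, but it holds verbatim for decreasing ones, since replacing $f$ by $-f$ leaves both the centered sum's second moment and the right-hand side of \eqref{eq.condH2q} unchanged. Applying the case $q=1$ of \eqref{eq.condH2q} to each of the two families (with $|I|=mn$, and using that for an indicator $\E f^2=\E f=\mathbb{P}(\cdot)$), together with $(a+b)^2\le2a^2+2b^2$ for the two centered sums, gives
$$\var(N)\le C\,mn\bigl(\mathbb{P}(X_{1,1}>c)+\mathbb{P}(X_{1,1}<-c)\bigr)=C\,mn\,\mathbb{P}(|X_{1,1}|>c)=C\,\E N,$$
hence $\E N^2=\var(N)+(\E N)^2\le C\,\E N+(\E N)^2$.

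To finish, I would apply the Cauchy--Schwarz (Paley--Zygmund) inequality $\E N=\E\bigl(N\mathbf{1}(N\ge1)\bigr)\le(\E N^2)^{1/2}\,\mathbb{P}(N\ge1)^{1/2}$, which yields $\mathbb{P}(N\ge1)\ge(\E N)^2/\E N^2\ge\E N/(C+\E N)$. Rearranging gives $\E N\bigl(1-\mathbb{P}(N\ge1)\bigr)\le C\,\mathbb{P}(N\ge1)$. Since $\mathbb{P}(N\ge1)\to0$, there is an $n_0$ (depending on $\varepsilon$) with $\mathbb{P}(N\ge1)\le1/2$ for $m\vee n\ge n_0$, whence $\E N\le2C\,\mathbb{P}(N\ge1)$; this is \eqref{bc03}, and letting $m\vee n\to\infty$ on the right-hand side delivers the final limit. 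The main obstacle is purely the variance bound: one must recognize that $(H_2)$ applies only to monotone functions, and therefore split the symmetric indicator into its increasing and decreasing parts before the dependence hypothesis can be used; the remainder is the standard reverse Borel--Cantelli mechanism.
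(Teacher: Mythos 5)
Your proof is correct, and at its mathematical core it runs on the same mechanism as the paper's; the difference is in packaging. The paper does not carry out the second-moment computation itself: it splits each variable into $X_{k,\ell}^{+}$ and $X_{k,\ell}^{-}$ and invokes Proposition 2.5 of \cite{thanh2023hsu} for the events $(X_{k,\ell}^{\pm}>b_{m,n}\varepsilon)$, which delivers the Chung--Erd\H{o}s-type inequality
\begin{equation*}
\left(1-\mathbb{P}\left(\max_{k\le m,\ell\le n} X_{k,\ell}^{+}>b_{m,n}\varepsilon\right)\right)^2\sum_{k=1}^m\sum_{\ell=1}^n \mathbb{P}\left(X_{k,\ell}^{+}>b_{m,n}\varepsilon\right)\le C\,\mathbb{P}\left(\max_{k\le m,\ell\le n} X_{k,\ell}^{+}>b_{m,n}\varepsilon\right),
\end{equation*}
then uses \eqref{bc02} to absorb the prefactor, doing this separately for the positive and negative parts (thresholds $n_1$ and $n_2$) and setting $n_0=\max\{n_1,n_2\}$. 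You instead prove the needed inequality from scratch: a single counting variable $N$ for the two-sided tail, the variance bound $\var(N)\le C\,\E N$ obtained from Condition $(H_2)$ applied to the increasing part $\mathbf{1}(x>c)$ and the decreasing part $\mathbf{1}(x<-c)$, and then Cauchy--Schwarz. Your observation that \eqref{eq.condH2q} is invariant under $f\mapsto -f$, so that it applies verbatim to decreasing functions, is exactly right, and it addresses the same monotonicity issue that the paper sidesteps by working with $X^{\pm}$ and citing the external proposition. What your route buys is self-containedness (no appeal to a result outside the paper) and a slightly cleaner endgame: treating both tails inside one $N$ gives $\E N\,(1-\mathbb{P}(N\ge1))\le C\,\mathbb{P}(N\ge1)$ in one stroke, which is even marginally stronger than the squared prefactor above, though both suffice once $\mathbb{P}(N\ge1)\le 1/2$. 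What the paper's route buys is brevity and reuse of a one-dimensional result already established elsewhere.
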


\begin{proof}
	Let $\varepsilon>0$ be arbitrary. It follows from \eqref{bc02} that
	\begin{equation}\label{bc05}
		\lim_{m\vee n\to\infty}\mathbb{P}\left(\max_{k\le m,\ell\le n} X_{k,\ell}^{+}>b_{m,n}\varepsilon\right)=\lim_{m\vee n\to\infty}\mathbb{P}\left(\bigcup_{k=1}^m\bigcup_{\ell=1}^n \left(X_{k,\ell}^{+}>b_{m,n}\varepsilon\right)\right)=0.
	\end{equation}
	Since the array $\{X_{m,n}, \ m\ge 1,\ n \geq 1\}$ is comprised of identically distributed random variables and satisfies Condition $(H_{2})$, we can apply Proposition 2.5 in \cite{thanh2023hsu} for events $\{(X_{k,\ell}^{+}>b_{m,n}\varepsilon),1\le k\le m,1\le \ell\le n\}$
	to obtain
	\begin{equation}\label{bc07}
		\left(1-\mathbb{P}\left(\max_{k\le m,\ell\le n} X_{k,\ell}^{+}>b_{m,n}\varepsilon\right)\right)^2\sum_{k= 1}^m\sum_{\ell=1}^n \mathbb{P}(X_{k,\ell}^{+}>b_{m,n}\varepsilon)\le C\mathbb{P}\left(\max_{k\le m,\ell\le n} X_{k,\ell}^{+}>b_{m,n}\varepsilon\right).
	\end{equation}
	It follows from \eqref{bc05} and \eqref{bc07} that there exists a positive integer $n_1$ such that
	\begin{equation}\label{bc08}
		mn\mathbb{P}(X_{1,1}^{+}>b_{m,n}\varepsilon)=\sum_{k= 1}^m\sum_{\ell=1}^n \mathbb{P}(X_{k,\ell}^{+}>b_{m,n}\varepsilon)\le C\mathbb{P}\left(\max_{k\le m,\ell\le n} X_{k,\ell}^{+}>b_{m,n}\varepsilon\right)
	\end{equation}
	whenever $m\vee n\ge n_1$. By using the same arguments, we also have
	\begin{equation}\label{bc09}
		mn\mathbb{P}(X_{1,1}^{-}>b_{m,n}\varepsilon)\le C\mathbb{P}\left(\max_{k\le m,\ell\le n} X_{k,\ell}^{-}>b_{m,n}\varepsilon\right)
	\end{equation}
	whenever $m\vee n\ge n_2$ for some positive integer $n_2$.	Letting $n_0=\max\{n_1,n_2\}$ and combining \eqref{bc08} and \eqref{bc09}, we obtain \eqref{bc03}.
\end{proof}

\begin{lemma}\label{lem.moment.estimate03}
	Let $\alpha>0,\ q>0,\ 0<p<q$ and let $X$ be a random variable.
	Then the following statements are equivalent:
	\begin{description}
		\item[(i)] $\E\left(|X|^p\log|X|\right)<\infty.$
		\item[(ii)] $\sum_{m=1}^{\infty}\sum_{n=1}^{\infty}(mn)^{\alpha p-1}\mathbb{P}\left(|X|>(mn)^{\alpha}\right)<\infty.$
		\item[(iii)] $\sum_{m=1}^{\infty}\sum_{n=1}^{\infty}2^{(m+n)\alpha p}\mathbb{P}\left(|X|>2^{(m+n)\alpha}\right)<\infty.$
		\item[(iv)] $\sum_{m=1}^{\infty}\sum_{n=1}^{\infty}(mn)^{\alpha (p-q)-1} \E\left(|X|^q\mathbf{1}\left(|X|\le (mn)^{\alpha}\right)\right)<\infty.$
		\item[(v)] $\sum_{m=1}^{\infty}\sum_{n=1}^{\infty}2^{(m+n)\alpha (p-q)} \E\left(|X|^q\mathbf{1}\left(|X|\le 2^{(m+n)\alpha}\right)\right)<\infty.$
	\end{description}
\end{lemma}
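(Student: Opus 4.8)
The plan is to show that all five conditions are equivalent to (i), using two devices: collapsing each double sum to a single sum, and comparing polynomial index-weights with geometric ones. The key observation is that the geometric-weight sums (iii) and (v) collapse. Grouping the pairs $(m,n)$ by the value $k=m+n$ and using $|\{(m,n):m,n\ge1,\ m+n=k\}|=k-1$, condition (iii) is equivalent to
\[
\sum_{k\ge2}(k-1)2^{k\alpha p}\Pro\left(|X|>2^{k\alpha}\right)<\infty,
\]
and (v) to
\[
\sum_{k\ge2}(k-1)2^{k\alpha(p-q)}\E\left(|X|^q\mathbf{1}(|X|\le2^{k\alpha})\right)<\infty.
\]
This is the crucial step: the combinatorial factor $k-1\asymp k$ is exactly what produces the logarithmic weight appearing in (i), since $\log(2^{k\alpha})\asymp k$.

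To pass from the collapsed form of (iii) to (i), I would set $p_j=\Pro(2^{(j-1)\alpha}<|X|\le2^{j\alpha})$, so that $\Pro(|X|>2^{k\alpha})=\sum_{j>k}p_j$, and interchange the order of summation:
\[
\sum_{k\ge2}(k-1)2^{k\alpha p}\Pro(|X|>2^{k\alpha})=\sum_{j}p_j\sum_{2\le k<j}(k-1)2^{k\alpha p}.
\]
Since $\alpha p>0$, the inner geometric sum is governed by its last term, so it is $\asymp j\,2^{j\alpha p}$, and the whole expression is $\asymp\sum_j j\,2^{j\alpha p}p_j$. On the block $2^{(j-1)\alpha}<|X|\le2^{j\alpha}$ one has $|X|^p\asymp2^{j\alpha p}$ and $\log|X|\asymp j$, whence $\sum_j j\,2^{j\alpha p}p_j\asymp\E\left(|X|^p\log|X|\right)$, the small-$|X|$ part being finite because $\log|X|=\log 2$ there. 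This gives (iii)$\Leftrightarrow$(i).

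The same interchange applied to the collapsed form of (v) proceeds identically, except that now $p-q<0$, so the inner sum $\sum_{k\ge j}(k-1)2^{k\alpha(p-q)}$ is governed instead by its \emph{first} term and is $\asymp j\,2^{j\alpha(p-q)}$; using $\E\left(|X|^q\mathbf{1}(2^{(j-1)\alpha}<|X|\le2^{j\alpha})\right)\asymp2^{j\alpha q}p_j$, the collapsed sum is again $\asymp\sum_j j\,2^{j\alpha p}p_j$, so (v)$\Leftrightarrow$(i). It remains to link the polynomial-weight sums to their geometric counterparts by dyadic blocking: splitting $m,n$ into ranges $2^a\le m<2^{a+1}$ and $2^b\le n<2^{b+1}$, on each block $(mn)^{\alpha p-1}\asymp2^{(a+b)(\alpha p-1)}$, the tail $\Pro(|X|>(mn)^\alpha)$ is trapped between $\Pro(|X|>2^{(a+b+2)\alpha})$ and $\Pro(|X|>2^{(a+b)\alpha})$, and the block contains $2^{a+b}$ pairs, so the block sum is $\asymp2^{(a+b)\alpha p}\Pro(|X|>2^{(a+b)\alpha})$ up to a bounded shift of index. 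Summing over blocks gives (ii)$\Leftrightarrow$(iii), and the identical argument on the truncated moments gives (iv)$\Leftrightarrow$(v); combining everything closes the chain.

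The step I expect to demand the most care is the pair of summation interchanges: one must read off from the sign of the exponent --- $\alpha p>0$ versus $\alpha(p-q)<0$, which is exactly where the hypothesis $0<p<q$ enters --- which endpoint of the inner geometric series dominates, and then carry the two-sided $\asymp$ constants through the dyadic comparisons of $|X|^p$, $\log|X|$, and the tail probabilities so that both directions of each equivalence survive.
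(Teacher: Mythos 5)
Your proof is correct, but it takes a genuinely different route from the paper. The paper's proof is essentially a citation: the equivalence of (i) and (ii) is invoked as a special case of Lemma 2.1 in Gut (1978), the equivalence of (i) and (iv) is declared ``similar,'' and the passages (ii)$\Leftrightarrow$(iii) and (iv)$\Leftrightarrow$(v) are declared obvious. In Gut's lemma the logarithm in (i) arises from the divisor-type count $\sum_{mn\le x}1\asymp x\log x$ for the polynomially weighted sums, whereas you avoid that lattice-point estimate entirely: you first reduce the geometric sums (iii) and (v) along the anti-diagonals $m+n=k$, where the count is exactly $k-1$, and then extract the logarithm from that linear factor via a summation interchange over dyadic blocks of $|X|$, reading off which endpoint of the inner geometric series dominates from the sign of the exponent ($\alpha p>0$ versus $\alpha(p-q)<0$, which is precisely where $0<p<q$ enters). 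The remaining comparisons (ii)$\Leftrightarrow$(iii) and (iv)$\Leftrightarrow$(v) you justify by dyadic blocking of the indices with a bounded index shift, which is exactly the content the paper labels obvious. What your approach buys is a fully self-contained, elementary proof of all five equivalences with both directions of each $\asymp$ tracked explicitly; what the paper's approach buys is brevity, at the cost of delegating the central counting argument to an external reference. One small point worth keeping in mind when writing this up: the paper's convention $\log a=\log(a\vee2)$ is what makes your treatment of the region $|X|\le1$ (and the block $j=1$) work, since $\log|X|$ is then bounded above and below by positive constants there; your argument uses this correctly.
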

\begin{proof}
	The equivalence of (i) and (ii) is a special case of Lemma 2.1 in Gut \cite{gut1978marcinkiewicz}.
	The proof of the equivalence of (i) and (iv) is similar.
	The equivalence of (ii) and (iii), and the equivalence of (iv) and (v) are obvious.
\end{proof}

Finally, we present the proof of Claim \ref{claim.estimate} which was used in the proof of Theorem \ref{thm.maximal.ineq1}.

\begin{proof}[Proof of Claim \ref{claim.estimate}]
	
	For $m\ge 1,\ n\ge 1$, $1\le u<2^m$, $1\le v<2^n$, $0\le s\le m$, $0\le t\le n$, set 
	\[k_{u,s}=\lfloor u/2^s \rfloor,\ \ell_{v,t}=\lfloor v/2^t \rfloor,\ u_s=k_{u,s}2^s, \ v_t=\ell_{v,t}2^t,\]
	\begin{equation*}
		\begin{split}
		T_{s-1,t,u_{s-1},v}&=S_{s-1,t,u_{s-1},v}-S_{s-1,t,u_{s},v}\ (s\ge 1),
		\end{split}
	\end{equation*}
	and
	\begin{equation*}
		\begin{split}
T_{s,t,u,v}^{*}&=S_{s,t,u,v}-S_{s-1,t,u,v}-S_{s,t,u_{s},v}+S_{s-1,t,u_{s},v}\ (s\ge 1).
		\end{split}
	\end{equation*}
	Then $u_0=u$, $v_0=v$ and $u_m=v_n=0$.
	For all $m\ge 1,n\ge 1$, $1\le u<2^m,1\le v<2^n$, we have
	\begin{equation}\label{mz31a}
		\begin{split}
			S_{m,n,u,v}&=\sum_{s=1}^{m} \left(S_{s-1,n,u_{s-1},v}-S_{s-1,n,u_{s},v} \right)\\
			&\quad+\sum_{s=1}^{m}\left(S_{s,n,u,v}-S_{s-1,n,u,v}-S_{s,n,u_{s},v}+S_{s-1,n,u_{s},v} \right)\\
			&=\sum_{s=1}^{m} T_{s-1,n,u_{s-1},v}+\sum_{s=1}^{m} T_{s,n,u,v}^{*}.
		\end{split}
	\end{equation}
	Applying the above decomposition again for the second and the fourth indices, we have
	\begin{equation}\label{mz31b}
		\begin{split}
			T_{s-1,n,u_{s-1},v}&=\sum_{t=1}^n\left(T_{s-1,t-1,u_{s-1},v_{t-1}}-T_{s-1,t-1,u_{s-1},v_{t}}\right)\\
			&\quad+\sum_{t=1}^n\left(T_{s-1,t,u_{s-1},v}-T_{s-1,t-1,u_{s-1},v}- T_{s-1,t,u_{s-1},v_{t}}+T_{s-1,t-1,u_{s-1},v_{t}}\right),
		\end{split}
	\end{equation}
	and
	\begin{equation}\label{mz31c}
		\begin{split}
			T_{s,n,u,v}^{*}&=\sum_{t=1}^n\left(T_{s,t-1,u,v_{t-1}}^{*}-T_{s,t-1,u,v_{t}}^{*}\right)\\
			&\quad+\sum_{t=1}^n\left(T_{s,t,u,v}^{*}-T_{s,t-1,u,v}^{*}- T_{s,t,u,v_{t}}^{*}+T_{s,t-1,u,v_{t}}^{*}\right).
		\end{split}
	\end{equation}
	Combining \eqref{mz31a}--\eqref{mz31c} yields
	\begin{equation}\label{mz31d}
		\begin{split}
			S_{m,n,u,v}&=\sum_{s=1}^{m}\sum_{t=1}^n\left(T_{s-1,t-1,u_{s-1},v_{t-1}}-T_{s-1,t-1,u_{s-1},v_{t}}\right)\\
			&\quad+\sum_{s=1}^{m}\sum_{t=1}^n\left(T_{s-1,t,u_{s-1},v}-T_{s-1,t-1,u_{s-1},v}- T_{s-1,t,u_{s-1},v_{t}}+T_{s-1,t-1,u_{s-1},v_{t}}\right)\\
			&\quad+\sum_{s=1}^{m}\sum_{t=1}^n\left(T_{s,t-1,u,v_{t-1}}^{*}-T_{s,t-1,u,v_{t}}^{*}\right)\\
			&\quad+\sum_{s=1}^{m}\sum_{t=1}^n\left(T_{s,t,u,v}^{*}-T_{s,t-1,u,v}^{*}- T_{s,t,u,v_{t}}^{*}+T_{s,t-1,u,v_{t}}^{*}\right)\\
			&:=I_1(m,n,u,v)+I_2(m,n,u,v)+I_3(m,n,u,v)+I_4(m,n,u,v).
		\end{split}
	\end{equation}
	By definitions of $u_s$ and $v_t$, we have either $u_{s-1}=u_s$ or $u_{s-1}=u_s+2^{s-1}$ and $v_{t-1}=v_t$ or $v_{t-1}=v_t+2^{t-1}$. 
	It is also easy to see that $0\le u_s\le u<u_s+2^s, 0\le v_t\le v<v_t+2^t$. Hereafter, the sum $\sum_{i=A+1}^{A}(\cdot)_i$ is interpreted to be $0$.
	Keeping these facts and conventions in mind, we have for all $1\le u<2^m,1\le v<2^n$, $m\ge 1,n\ge 1$, 
	\begin{equation}\label{mz33}
		\begin{split}
			\max_{\substack{1\le u< 2^m\\ 1\le v< 2^n}}|I_1(m,n,u,v)|&=\max_{\substack{1\le u< 2^m\\ 1\le v< 2^n}}\left|\sum_{s=1}^{m}\sum_{t=1}^n\left(\sum_{i=u_s+1}^{u_{s-1}} \sum_{j=v_t+1}^{v_{t-1}}\left(X_{s+t-2,i,j}-\E X_{s+t-2,i,j}\right) \right)\right|\\
			&\le \sum_{s=1}^m\sum_{t=1}^n \max_{\substack{0\le k< 2^{m-s}\\ 0\le \ell< 2^{n-t}}}\left|\sum_{i=k2^s+1}^{k2^s+2^{s-1}}\sum_{j=\ell 2^t+1}^{\ell 2^t+2^{t-1}}(X_{s+t-2,i,j}-\E X_{s+t-2,i,j})\right|.
		\end{split}
	\end{equation}
	Similarly, for all $1\le u<2^m,1\le v<2^n,m\ge1,n\ge1$, we have
	\begin{equation}\label{mz35}
		\begin{split}
			|I_2(m,n,u,v)|&=\left|\sum_{s=1}^{m}\sum_{t=1}^n\sum_{i=u_s+1}^{u_{s-1}} \sum_{j=v_t+1}^{v}\left(X_{s+t-1,i,j}-X_{s+t-2,i,j}-\E(X_{s+t-1,i,j}-X_{s+t-2,i,j})\right)\right|\\
			&\le \sum_{s=1}^{m}\sum_{t=1}^n\sum_{i=u_s+1}^{u_{s}+2^{s-1}} \sum_{j=v_t+1}^{v_t+2^t}\left(X_{s+t-1,i,j}^{*}+\E X_{s+t-1,i,j}^{*}\right)\\
			&=\sum_{s=1}^{m}\sum_{t=1}^n \sum_{i=u_s+1}^{u_{s}+2^{s-1}} \sum_{j=v_t+1}^{v_t+2^t} \left(Y_{s+t-1,i,j}^{*}+2\E X_{s+t-1,i,j}^{*}\right)\\
			&\le \sum_{s=1}^{m}\sum_{t=1}^n \left|\sum_{i=u_s+1}^{u_{s}+2^{s-1}} \sum_{j=v_t+1}^{v_t+2^t} Y_{s+t-1,i,j}^{*}\right|+2\sum_{s=1}^{m}\sum_{t=1}^{n}\sum_{i=u_s+1}^{u_{s}+2^{s-1}} \sum_{j=v_t+1}^{v_t+2^t} b_{2^{s+t}}\mathbb{P}\left(X_{i,j}>b_{2^{s+t-2}}\right),
		\end{split}
	\end{equation}
	where we have applied \eqref{max14} in the first and the last inequalities.
	Now, by recalling definitions of $u_s$ and $v_t$, we have from \eqref{mz35} that
	\begin{equation}\label{mz36}
		\begin{split}
			\max_{\substack{1\le u< 2^m\\ 1\le v< 2^n}}|I_2(m,n,u,v)|
			&\le \sum_{s=1}^m\sum_{t=1}^n \max_{\substack{0\le k< 2^{m-s}\\ 0\le \ell< 2^{n-t}}}\left|\sum_{i=k2^s+1}^{k2^s+2^{s-1}}\sum_{j=\ell 2^t+1}^{\ell 2^t+2^{t}}Y_{s+t-1,i,j}^{*}\right|\\
			&\qquad+\sum_{s=1}^{m}\sum_{t=1}^n 2^{s+t}b_{2^{s+t}}\max_{\substack{1\le i< 2^{m}\\ 1\le j< 2^{n}}}\mathbb{P}\left(X_{i,j}>b_{2^{s+t-2}}\right).
		\end{split}
	\end{equation}
	Similarly, we have
	\begin{equation}\label{mz37}
		\begin{split}
			\max_{\substack{1\le u< 2^m\\ 1\le v< 2^n}}|I_3(m,n,u,v)|
			&\le \sum_{s=1}^m\sum_{t=1}^n \max_{\substack{0\le k< 2^{m-s}\\ 0\le \ell< 2^{n-t}}}\left|\sum_{i=k2^s+1}^{k2^s+2^{s}}\sum_{j=\ell 2^t+1}^{\ell 2^t+2^{t-1}}Y_{s+t-1,i,j}^{*}\right|\\
			&\qquad+\sum_{s=1}^{m}\sum_{t=1}^n 2^{s+t}b_{2^{s+t}}\max_{\substack{1\le i< 2^{m}\\ 1\le j< 2^{n}}}\mathbb{P}\left(X_{i,j}>b_{2^{s+t-2}}\right),
		\end{split}
	\end{equation}
	and
	\begin{equation}\label{mz39}
		\begin{split}
			\max_{\substack{1\le u< 2^m\\ 1\le v< 2^n}}|I_4(m,n,u,v)|
			&\le \sum_{s=1}^m\sum_{t=1}^n \max_{\substack{0\le k< 2^{m-s}\\ 0\le \ell< 2^{n-t}}}\left|\sum_{i=k2^s+1}^{k2^s+2^{s}}\sum_{j=\ell 2^t+1}^{\ell 2^t+2^{t}}\left(Y_{s+t,i,j}^{*}+Y_{s+t-1,i,j}^{*}\right)\right|\\
			&\qquad+4\sum_{s=1}^{m}\sum_{t=1}^n 2^{s+t}b_{2^{s+t}}\max_{\substack{1\le i< 2^{m}\\ 1\le j< 2^{n}}}\mathbb{P}\left(X_{i,j}>b_{2^{s+t-2}}\right).
		\end{split}
	\end{equation}
	Combining \eqref{mz31d}, \eqref{mz33}, \eqref{mz36}--\eqref{mz39} yields \eqref{mz30}.
	
\end{proof}


\begin{thebibliography}{60}
	\expandafter\ifx\csname natexlab\endcsname\relax\def\natexlab#1{#1}\fi
	\providecommand{\url}[1]{\texttt{#1}}
	\providecommand{\href}[2]{#2}
	\providecommand{\path}[1]{#1}
	\providecommand{\DOIprefix}{doi:}
	\providecommand{\ArXivprefix}{arXiv:}
	\providecommand{\URLprefix}{URL: }
	\providecommand{\Pubmedprefix}{pmid:}
	\providecommand{\doi}[1]{\href{http://dx.doi.org/#1}{\path{#1}}}
	\providecommand{\Pubmed}[1]{\href{pmid:#1}{\path{#1}}}
	\providecommand{\bibinfo}[2]{#2}
	\ifx\xfnm\relax \def\xfnm[#1]{\unskip,\space#1}\fi
	\bibitem[{Anh et~al.(2021)Anh, Hien, Th\`{a}nh and Van}]{anh2021marcinkiewicz}
	\bibinfo{author}{Anh, V.T.N.}, \bibinfo{author}{Hien, N.T.T.},
	\bibinfo{author}{Th\`{a}nh, L.V.}, \bibinfo{author}{Van, V.T.H.},
	\bibinfo{year}{2021}.
	\newblock \bibinfo{title}{The {Marcinkiewicz--Zygmund}-type strong law of large
		numbers with general normalizing sequences}.
	\newblock \bibinfo{journal}{Journal of Theoretical Probability}
	\bibinfo{volume}{34}, \bibinfo{pages}{331--348}.
	\bibitem[{Baum and Katz(1965)}]{baum1965convergence}
	\bibinfo{author}{Baum, L.E.}, \bibinfo{author}{Katz, M.}, \bibinfo{year}{1965}.
	\newblock \bibinfo{title}{Convergence rates in the law of large numbers}.
	\newblock \bibinfo{journal}{Transactions of the American Mathematical Society}
	\bibinfo{volume}{120}, \bibinfo{pages}{108--123}.
	\bibitem[{Bingham et~al.(1989)Bingham, Goldie and Teugels}]{bingham1989regular}
	\bibinfo{author}{Bingham, N.H.}, \bibinfo{author}{Goldie, C.M.},
	\bibinfo{author}{Teugels, J.L.}, \bibinfo{year}{1989}.
	\newblock \bibinfo{title}{Regular variation}. volume~\bibinfo{volume}{27}.
	\newblock \bibinfo{publisher}{Cambridge University Press}.
	\bibitem[{Bradley(1992)}]{bradley1992spectral}
	\bibinfo{author}{Bradley, R.}, \bibinfo{year}{1992}.
	\newblock \bibinfo{title}{On the spectral density and asymptotic normality of
		weakly dependent random fields}.
	\newblock \bibinfo{journal}{Journal of Theoretical Probability}
	\bibinfo{volume}{5}, \bibinfo{pages}{355--373}.
	\bibitem[{Bradley(1993)}]{bradley1993equivalent}
	\bibinfo{author}{Bradley, R.}, \bibinfo{year}{1993}.
	\newblock \bibinfo{title}{Equivalent mixing conditions for random fields}.
	\newblock \bibinfo{journal}{Annals of Probability} \bibinfo{volume}{21},
	\bibinfo{pages}{1921--1926}.
	\bibitem[{Bradley(2007)}]{bradley2007introduction}
	\bibinfo{author}{Bradley, R.}, \bibinfo{year}{2007}.
	\newblock \bibinfo{title}{{Introduction to strong mixing conditions. Vol.
			1--3}}.
	\newblock \bibinfo{publisher}{Kendrick Press}.
	\bibitem[{Bradley(2010)}]{bradley2010dependence}
	\bibinfo{author}{Bradley, R.}, \bibinfo{year}{2010}.
	\newblock \bibinfo{title}{On the dependence coefficients associated with three
		mixing conditions for random fields}, in: \bibinfo{booktitle}{Dependence in
		Analysis, Probability and Number Theory}, pp. \bibinfo{pages}{89--121}.
	\bibitem[{Bradley(2012)}]{bradley2012possible}
	\bibinfo{author}{Bradley, R.}, \bibinfo{year}{2012}.
	\newblock \bibinfo{title}{{On possible mixing rates for some strong mixing
			conditions for $N$-tuplewise independent random fields}}.
	\newblock \bibinfo{journal}{Houston Journal of Mathematics}
	\bibinfo{volume}{38}, \bibinfo{pages}{815--832}.
	\bibitem[{Bradley and Tone(2017)}]{bradley2017central}
	\bibinfo{author}{Bradley, R.}, \bibinfo{author}{Tone, C.},
	\bibinfo{year}{2017}.
	\newblock \bibinfo{title}{{A central limit theorem for non-stationary strongly
			mixing random fields}}.
	\newblock \bibinfo{journal}{Journal of Theoretical Probability}
	\bibinfo{volume}{30}, \bibinfo{pages}{655--674}.
	\bibitem[{Bradley and Utev(1994)}]{bradley1994second}
	\bibinfo{author}{Bradley, R.}, \bibinfo{author}{Utev, S.},
	\bibinfo{year}{1994}.
	\newblock \bibinfo{title}{{On second-order properties of mixing random
			sequences and random fields}}, in: \bibinfo{booktitle}{Probability Theory and
		Mathematical Statistics, Proceedings of the Sixth Vilnius Conference}.
	\bibinfo{publisher}{De Gruyter}, pp. \bibinfo{pages}{99--120}.
	\bibitem[{Chen et~al.(2021)Chen, Rai{\v{c}} and Th\`{a}nh}]{chen2021error}
	\bibinfo{author}{Chen, L.H.Y.}, \bibinfo{author}{Rai{\v{c}}, M.},
	\bibinfo{author}{Th\`{a}nh, L.V.}, \bibinfo{year}{2021}.
	\newblock \bibinfo{title}{{On the error bound in the normal approximation for
			Jack measures}}.
	\newblock \bibinfo{journal}{Bernoulli} \bibinfo{volume}{27},
	\bibinfo{pages}{442--468}.
	\bibitem[{Chen et~al.(2014)Chen, Bai and Sung}]{chen2014bahr}
	\bibinfo{author}{Chen, P.}, \bibinfo{author}{Bai, P.}, \bibinfo{author}{Sung,
		S.H.}, \bibinfo{year}{2014}.
	\newblock \bibinfo{title}{The {von Bahr--Esseen} moment inequality for pairwise
		independent random variables and applications}.
	\newblock \bibinfo{journal}{Journal of Mathematical Analysis and Applications}
	\bibinfo{volume}{419}, \bibinfo{pages}{1290--1302}.
	\bibitem[{Chen and Sung(2020)}]{chen2020rosenthal}
	\bibinfo{author}{Chen, P.}, \bibinfo{author}{Sung, S.H.}, \bibinfo{year}{2020}.
	\newblock \bibinfo{title}{{Rosenthal type inequalities for random variables}}.
	\newblock \bibinfo{journal}{Journal of Mathematical Inequalities}
	\bibinfo{volume}{14}, \bibinfo{pages}{305--18}.
	\bibitem[{Chen et~al.(2010)Chen, Chen and Ng}]{chen2010strong}
	\bibinfo{author}{Chen, Y.}, \bibinfo{author}{Chen, A.}, \bibinfo{author}{Ng,
		K.W.}, \bibinfo{year}{2010}.
	\newblock \bibinfo{title}{The strong law of large numbers for extended
		negatively dependent random variables}.
	\newblock \bibinfo{journal}{Journal of Applied Probability}
	\bibinfo{volume}{47}, \bibinfo{pages}{908--922}.
	\bibitem[{Cs{\"o}rg{\H{o}} et~al.(1983)Cs{\"o}rg{\H{o}}, Tandori and
		Totik}]{csorgo1983strong}
	\bibinfo{author}{Cs{\"o}rg{\H{o}}, S.}, \bibinfo{author}{Tandori, K.},
	\bibinfo{author}{Totik, V.}, \bibinfo{year}{1983}.
	\newblock \bibinfo{title}{On the strong law of large numbers for pairwise
		independent random variables}.
	\newblock \bibinfo{journal}{Acta Mathematica Hungarica} \bibinfo{volume}{42},
	\bibinfo{pages}{319--330}.
	\bibitem[{Cuny and Merlev{\`e}de(2014)}]{cuny2014martingale}
	\bibinfo{author}{Cuny, C.}, \bibinfo{author}{Merlev{\`e}de, F.},
	\bibinfo{year}{2014}.
	\newblock \bibinfo{title}{On martingale approximations and the quenched weak
		invariance principle}.
	\newblock \bibinfo{journal}{Annals of Probability} \bibinfo{volume}{42},
	\bibinfo{pages}{760--793}.
	\bibitem[{Dedecker and Merlev{\`e}de(2008)}]{dedecker2008convergence}
	\bibinfo{author}{Dedecker, J.}, \bibinfo{author}{Merlev{\`e}de, F.},
	\bibinfo{year}{2008}.
	\newblock \bibinfo{title}{Convergence rates in the law of large numbers for
		{Banach-valued} dependent variables}.
	\newblock \bibinfo{journal}{Theory of Probability and Its Applications}
	\bibinfo{volume}{52}, \bibinfo{pages}{416--438}.
	\bibitem[{Erd\"{o}s(1949)}]{erdos1949theorem}
	\bibinfo{author}{Erd\"{o}s, P.}, \bibinfo{year}{1949}.
	\newblock \bibinfo{title}{On a theorem of {Hsu and Robbins}}.
	\newblock \bibinfo{journal}{Annals of Mathematical Statistics}
	\bibinfo{volume}{20}, \bibinfo{pages}{286--291}.
	\bibitem[{Etemadi(1981)}]{etemadi1981elementary}
	\bibinfo{author}{Etemadi, N.}, \bibinfo{year}{1981}.
	\newblock \bibinfo{title}{An elementary proof of the strong law of large
		numbers}.
	\newblock \bibinfo{journal}{Zeitschrift f{\"u}r Wahrscheinlichkeitstheorie und
		Verwandte Gebiete} \bibinfo{volume}{55}, \bibinfo{pages}{119--122}.
	\bibitem[{{Feller, Willliam}(1971)}]{feller1971introduction}
	\bibinfo{author}{{Feller, Willliam}}, \bibinfo{year}{1971}.
	\newblock \bibinfo{title}{{An Introduction to Probability Theory and Its
			Applications, Volume 2}}.
	\newblock \bibinfo{publisher}{John Wiley \& Sons}.
	\bibitem[{Giap et~al.(2022)Giap, Quang and Ngoc}]{giap2022some}
	\bibinfo{author}{Giap, D.X.}, \bibinfo{author}{Quang, N.V.},
	\bibinfo{author}{Ngoc, B.N.T.}, \bibinfo{year}{2022}.
	\newblock \bibinfo{title}{Some laws of large numbers for arrays of random upper
		semicontinuous functions}.
	\newblock \bibinfo{journal}{Fuzzy Sets and Systems} \bibinfo{volume}{435},
	\bibinfo{pages}{129--148}.
	\bibitem[{Giraudo(2019)}]{giraudo2019deviation}
	\bibinfo{author}{Giraudo, D.}, \bibinfo{year}{2019}.
	\newblock \bibinfo{title}{{Deviation inequalities for Banach space valued
			martingales differences sequences and random fields}}.
	\newblock \bibinfo{journal}{ESAIM: Probability and Statistics}
	\bibinfo{volume}{23}, \bibinfo{pages}{922--946}.
	\bibitem[{Gut(1978)}]{gut1978marcinkiewicz}
	\bibinfo{author}{Gut, A.}, \bibinfo{year}{1978}.
	\newblock \bibinfo{title}{Marcinkiewicz laws and convergence rates in the law
		of large numbers for random variables with multidimensional indices}.
	\newblock \bibinfo{journal}{Annals of Probability} \bibinfo{volume}{6},
	\bibinfo{pages}{469--482}.
	\bibitem[{Gut(1980)}]{gut1980convergence}
	\bibinfo{author}{Gut, A.}, \bibinfo{year}{1980}.
	\newblock \bibinfo{title}{Convergence rates for probabilities of moderate
		deviations for sums of random variables with multidimensional indices}.
	\newblock \bibinfo{journal}{Annals of Probability} \bibinfo{volume}{8},
	\bibinfo{pages}{298--313}.
	\bibitem[{Gut(2013)}]{gut2013probability}
	\bibinfo{author}{Gut, A.}, \bibinfo{year}{2013}.
	\newblock \bibinfo{title}{{Probability: A Graduate Course, Second Edition}}.
	\newblock \bibinfo{publisher}{Springer}.
	\bibitem[{Gut and Stadtm\"{u}ller(2012)}]{gut2012hsu}
	\bibinfo{author}{Gut, A.}, \bibinfo{author}{Stadtm\"{u}ller, U.},
	\bibinfo{year}{2012}.
	\newblock \bibinfo{title}{{On the Hsu--Robbins--Erd\"{o}s--Spitzer--Baum--Katz
			theorem for random fields}}.
	\newblock \bibinfo{journal}{Journal of Mathematical Analysis and Applications}
	\bibinfo{volume}{387}, \bibinfo{pages}{447--463}.
	\bibitem[{Haan and Ferreira(2006)}]{haan2006extreme}
	\bibinfo{author}{Haan, L.}, \bibinfo{author}{Ferreira, A.},
	\bibinfo{year}{2006}.
	\newblock \bibinfo{title}{Extreme Value Theory: An Introduction}.
	\newblock \bibinfo{publisher}{Springer}.
	\bibitem[{Hsu and Robbins(1947)}]{hsu1947complete}
	\bibinfo{author}{Hsu, P.L.}, \bibinfo{author}{Robbins, H.},
	\bibinfo{year}{1947}.
	\newblock \bibinfo{title}{Complete convergence and the law of large numbers}.
	\newblock \bibinfo{journal}{Proceedings of the National Academy of Sciences of
		the United States of America} \bibinfo{volume}{33}, \bibinfo{pages}{25--31}.
	\bibitem[{Janisch(2021)}]{janisch2021kolmogorov}
	\bibinfo{author}{Janisch, M.}, \bibinfo{year}{2021}.
	\newblock \bibinfo{title}{{Kolmogorov's strong law of large numbers holds for
			pairwise uncorrelated random variables}}.
	\newblock \bibinfo{journal}{Theory of Probability and Its Applications}
	\bibinfo{volume}{66}, \bibinfo{pages}{263--275}.
	\bibitem[{Joag-Dev and Proschan(1983)}]{joag1983negative}
	\bibinfo{author}{Joag-Dev, K.}, \bibinfo{author}{Proschan, F.},
	\bibinfo{year}{1983}.
	\newblock \bibinfo{title}{Negative association of random variables with
		applications}.
	\newblock \bibinfo{journal}{Annals of Statistics} \bibinfo{volume}{11},
	\bibinfo{pages}{286--295}.
	\bibitem[{Johnson et~al.(1985)Johnson, Schechtman and Zinn}]{johnson1985best}
	\bibinfo{author}{Johnson, W.}, \bibinfo{author}{Schechtman, G.},
	\bibinfo{author}{Zinn, J.}, \bibinfo{year}{1985}.
	\newblock \bibinfo{title}{Best constants in moment inequalities for linear
		combinations of independent and exchangeable random variables}.
	\newblock \bibinfo{journal}{Annals of Probability} \bibinfo{volume}{13},
	\bibinfo{pages}{234--253}.
	\bibitem[{Klesov(2014)}]{klesov2014limit}
	\bibinfo{author}{Klesov, O.}, \bibinfo{year}{2014}.
	\newblock \bibinfo{title}{Limit theorems for multi-indexed sums of random
		variables}. volume~\bibinfo{volume}{71}.
	\newblock \bibinfo{publisher}{Springer}.
	\bibitem[{Kuczmaszewska and Lagodowski(2011)}]{kuczmaszewska2011convergence}
	\bibinfo{author}{Kuczmaszewska, A.}, \bibinfo{author}{Lagodowski, Z.},
	\bibinfo{year}{2011}.
	\newblock \bibinfo{title}{{Convergence rates in the SLLN for some classes of
			dependent random fields}}.
	\newblock \bibinfo{journal}{Journal of Mathematical Analysis and Applications}
	\bibinfo{volume}{380}, \bibinfo{pages}{571--584}.
	\bibitem[{Lehmann(1966)}]{lehmann1966some}
	\bibinfo{author}{Lehmann, E.}, \bibinfo{year}{1966}.
	\newblock \bibinfo{title}{Some concepts of dependence}.
	\newblock \bibinfo{journal}{Annals of Mathematical Statistics}
	\bibinfo{volume}{37}, \bibinfo{pages}{1137--1153}.
	\bibitem[{Mart\u{\i}kainen(1995)}]{martikainen1995strong}
	\bibinfo{author}{Mart\u{\i}kainen, A.}, \bibinfo{year}{1995}.
	\newblock \bibinfo{title}{On the strong law of large numbers for sums of
		pairwise independent random variables}.
	\newblock \bibinfo{journal}{Statistics and Probability Letters}
	\bibinfo{volume}{25}, \bibinfo{pages}{21--26}.
	\bibitem[{Merlev{\`e}de and Peligrad(2013)}]{merlevede2013rosenthal}
	\bibinfo{author}{Merlev{\`e}de, F.}, \bibinfo{author}{Peligrad, M.},
	\bibinfo{year}{2013}.
	\newblock \bibinfo{title}{{Rosenthal-type inequalities for the maximum of
			partial sums of stationary processes and examples}}.
	\newblock \bibinfo{journal}{Annals of Probability} \bibinfo{volume}{41},
	\bibinfo{pages}{914--960}.
	\bibitem[{Peligrad(1985)}]{peligrad1985convergence}
	\bibinfo{author}{Peligrad, M.}, \bibinfo{year}{1985}.
	\newblock \bibinfo{title}{Convergence rates of the strong law for stationary
		mixing sequences}.
	\newblock \bibinfo{journal}{Zeitschrift f{\"u}r Wahrscheinlichkeitstheorie und
		Verwandte Gebiete} \bibinfo{volume}{70}, \bibinfo{pages}{307--314}.
	\bibitem[{Peligrad and Gut(1999)}]{peligrad1999almost}
	\bibinfo{author}{Peligrad, M.}, \bibinfo{author}{Gut, A.},
	\bibinfo{year}{1999}.
	\newblock \bibinfo{title}{Almost sure results for a class of dependent random
		variables}.
	\newblock \bibinfo{journal}{Journal of Theoretical Probability}
	\bibinfo{volume}{12}, \bibinfo{pages}{87--104}.
	\bibitem[{Peligrad and Peligrad(2023)}]{peligrad2023convergence}
	\bibinfo{author}{Peligrad, M.}, \bibinfo{author}{Peligrad, C.},
	\bibinfo{year}{2023}.
	\newblock \bibinfo{title}{Convergence of series of conditional expectations}.
	\newblock \bibinfo{journal}{Statistics and Probability Letters}
	\bibinfo{volume}{200}, \bibinfo{pages}{109869}.
	\bibitem[{Peligrad and Utev(2005)}]{peligrad2005new}
	\bibinfo{author}{Peligrad, M.}, \bibinfo{author}{Utev, S.},
	\bibinfo{year}{2005}.
	\newblock \bibinfo{title}{A new maximal inequality and invariance principle for
		stationary sequences}.
	\newblock \bibinfo{journal}{Annals of Probability} \bibinfo{volume}{33},
	\bibinfo{pages}{798--815}.
	\bibitem[{Pyke(1973)}]{pyke1973partial}
	\bibinfo{author}{Pyke, R.}, \bibinfo{year}{1973}.
	\newblock \bibinfo{title}{{Partial sums of matrix arrays, and Brownian
			sheets}}, in: \bibinfo{booktitle}{Stochastic Analysis}.
	\bibinfo{publisher}{Wiley London}, pp. \bibinfo{pages}{331--348}.
	\bibitem[{Pyke and Root(1968)}]{pyke1968convergence}
	\bibinfo{author}{Pyke, R.}, \bibinfo{author}{Root, D.}, \bibinfo{year}{1968}.
	\newblock \bibinfo{title}{{On convergence in $r$-mean of normalized partial
			sums}}.
	\newblock \bibinfo{journal}{Annals of Mathematical Statistics}
	\bibinfo{volume}{39}, \bibinfo{pages}{379--381}.
	\bibitem[{Rio(1995a)}]{rio1995maximal}
	\bibinfo{author}{Rio, E.}, \bibinfo{year}{1995}a.
	\newblock \bibinfo{title}{{A maximal inequality and dependent
			Marcinkiewicz--Zygmund strong laws}}.
	\newblock \bibinfo{journal}{Annals of Probability} \bibinfo{volume}{23},
	\bibinfo{pages}{918--937}.
	\bibitem[{Rio(1995b)}]{rio1995vitesses}
	\bibinfo{author}{Rio, E.}, \bibinfo{year}{1995}b.
	\newblock \bibinfo{title}{Vitesses de convergence dans la loi forte pour des
		suites d{\'e}pendantes {(Rates of convergence in the strong law for dependent
			sequences)}}.
	\newblock \bibinfo{journal}{{Comptes Rendus de l'Acad{\'e}mie des Sciences}.
		{S{\'e}rie I, Math{\'e}matique}} \bibinfo{volume}{320},
	\bibinfo{pages}{469--474}.
	\bibitem[{Rio(2017)}]{rio2017asymptotic}
	\bibinfo{author}{Rio, E.}, \bibinfo{year}{2017}.
	\newblock \bibinfo{title}{Asymptotic theory of weakly dependent random
		processes}.
	\newblock \bibinfo{publisher}{Probability Theory and Stochastic Modelling, 80
		Springer}.
	\bibitem[{Rosalsky and Th\`{a}nh(2021)}]{rosalsky2021note}
	\bibinfo{author}{Rosalsky, A.}, \bibinfo{author}{Th\`{a}nh, L.V.},
	\bibinfo{year}{2021}.
	\newblock \bibinfo{title}{A note on the stochastic domination condition and
		uniform integrability with applications to the strong law of large numbers}.
	\newblock \bibinfo{journal}{Statistics and Probability Letters}
	\bibinfo{volume}{178}, \bibinfo{pages}{109181}.
	\bibitem[{Shao(1995)}]{shao1995maximal}
	\bibinfo{author}{Shao, Q.M.}, \bibinfo{year}{1995}.
	\newblock \bibinfo{title}{Maximal inequalities for partial sums of
		$\rho$-mixing sequences}.
	\newblock \bibinfo{journal}{Annals of Probability} \bibinfo{volume}{23},
	\bibinfo{pages}{948--965}.
	\bibitem[{Shao(2000)}]{shao2000comparison}
	\bibinfo{author}{Shao, Q.M.}, \bibinfo{year}{2000}.
	\newblock \bibinfo{title}{A comparison theorem on moment inequalities between
		negatively associated and independent random variables}.
	\newblock \bibinfo{journal}{Journal of Theoretical Probability}
	\bibinfo{volume}{13}, \bibinfo{pages}{343--356}.
	\bibitem[{Shen and Volodin(2017)}]{shen2017weak}
	\bibinfo{author}{Shen, A.}, \bibinfo{author}{Volodin, A.},
	\bibinfo{year}{2017}.
	\newblock \bibinfo{title}{{Weak and strong laws of large numbers for arrays of
			rowwise END random variables and their applications}}.
	\newblock \bibinfo{journal}{Metrika} \bibinfo{volume}{80},
	\bibinfo{pages}{605--625}.
	\bibitem[{Spitzer(1956)}]{spitzer1956combinatorial}
	\bibinfo{author}{Spitzer, F.}, \bibinfo{year}{1956}.
	\newblock \bibinfo{title}{A combinatorial lemma and its application to
		probability theory}.
	\newblock \bibinfo{journal}{Transactions of the American Mathematical Society}
	\bibinfo{volume}{82}, \bibinfo{pages}{323--339}.
	\bibitem[{Stadtm{\"u}ller and Th\`{a}nh(2011)}]{stadtmuller2011strong}
	\bibinfo{author}{Stadtm{\"u}ller, U.}, \bibinfo{author}{Th\`{a}nh, L.V.},
	\bibinfo{year}{2011}.
	\newblock \bibinfo{title}{On the strong limit theorems for double arrays of
		blockwise {$M$-dependent} random variables}.
	\newblock \bibinfo{journal}{Acta Mathematica Sinica, English Series}
	\bibinfo{volume}{27}, \bibinfo{pages}{1923--1934}.
	\bibitem[{Stoica(2011)}]{stoica2011note}
	\bibinfo{author}{Stoica, G.}, \bibinfo{year}{2011}.
	\newblock \bibinfo{title}{A note on the rate of convergence in the strong law
		of large numbers for martingales}.
	\newblock \bibinfo{journal}{Journal of Mathematical Analysis and Applications}
	\bibinfo{volume}{381}, \bibinfo{pages}{910--913}.
	\bibitem[{Szynal(1993)}]{szynal1993complete}
	\bibinfo{author}{Szynal, D.}, \bibinfo{year}{1993}.
	\newblock \bibinfo{title}{On complete convergence for some classes of dependent
		random variables}.
	\newblock \bibinfo{journal}{Annales Universitatis Mariae Curie--Sklodowska,
		Sectio A Mathematica} \bibinfo{volume}{47}, \bibinfo{pages}{145--150}.
	\bibitem[{Th\`{a}nh(2020)}]{thanh2020theBaum}
	\bibinfo{author}{Th\`{a}nh, L.V.}, \bibinfo{year}{2020}.
	\newblock \bibinfo{title}{On the {Baum--Katz} theorem for sequences of pairwise
		independent random variables with regularly varying normalizing constants}.
	\newblock \bibinfo{journal}{Comptes Rendus Math\'{e}matique. Acad\'{e}mie des
		Sciences. Paris} \bibinfo{volume}{358}, \bibinfo{pages}{1231--1238}.
	\bibitem[{Th\`{a}nh(2023a)}]{thanh2023new}
	\bibinfo{author}{Th\`{a}nh, L.V.}, \bibinfo{year}{2023}a.
	\newblock \bibinfo{title}{On a new concept of stochastic domination and the
		laws of large numbers}.
	\newblock \bibinfo{journal}{TEST} \bibinfo{volume}{32},
	\bibinfo{pages}{74--106}.
	\bibitem[{Th\`{a}nh(2023b)}]{thanh2023extension}
	\bibinfo{author}{Th\`{a}nh, L.V.}, \bibinfo{year}{2023}b.
	\newblock \bibinfo{title}{{On an extension of the Pyke--Root theorem}}.
	\newblock \bibinfo{journal}{Manuscript} , \bibinfo{pages}{1--16}.
	\bibitem[{Th\`{a}nh(2023c)}]{thanh2023weak}
	\bibinfo{author}{Th\`{a}nh, L.V.}, \bibinfo{year}{2023}c.
	\newblock \bibinfo{title}{On weak laws of large numbers for maximal partial
		sums of pairwise independent random variables}.
	\newblock \bibinfo{journal}{Comptes Rendus Math\'{e}matique. Acad\'{e}mie des
		Sciences. Paris} \bibinfo{volume}{361}, \bibinfo{pages}{577--585}.
	\bibitem[{Th\`{a}nh(2023d)}]{thanh2023hsu}
	\bibinfo{author}{Th\`{a}nh, L.V.}, \bibinfo{year}{2023}d.
	\newblock \bibinfo{title}{{The Hsu--Robbins--Erd\"{o}s theorem for the maximum
			partial sums of quadruplewise independent random variables}}.
	\newblock \bibinfo{journal}{Journal of Mathematical Analysis and Applications}
	\bibinfo{volume}{521}, \bibinfo{pages}{126896}.
	\bibitem[{Utev and Peligrad(2003)}]{utev2003maximal}
	\bibinfo{author}{Utev, S.}, \bibinfo{author}{Peligrad, M.},
	\bibinfo{year}{2003}.
	\newblock \bibinfo{title}{Maximal inequalities and an invariance principle for
		a class of weakly dependent random variables}.
	\newblock \bibinfo{journal}{Journal of Theoretical Probability}
	\bibinfo{volume}{16}, \bibinfo{pages}{101--115}.
	\bibitem[{Wichura(1969)}]{wichura1969inequalities}
	\bibinfo{author}{Wichura, M.}, \bibinfo{year}{1969}.
	\newblock \bibinfo{title}{{Inequalities with applications to the weak
			convergence of random processes with multi-dimensional time parameters}}.
	\newblock \bibinfo{journal}{Annals of Mathematical Statistics}
	\bibinfo{volume}{40}, \bibinfo{pages}{681--687}.
	
\end{thebibliography}
	
\end{document}